\theoremstyle{plain}
\newtheorem{theorem}{Theorem}
\newtheorem{proposition}[theorem]{Proposition}
\newtheorem{lemma}[theorem]{Lemma}
\newtheorem{problem}[theorem]{Problem}
\theoremstyle{definition}
\newtheorem{definition}[theorem]{Definition}
\title{On the range of two-distance graphs}
\author{Péter Ágoston\thanks{Supported by the Ministry of Innovation and Technology NRDI Office within the framework of the Artificial Intelligence National Laboratory (RRF-2.3.1-21-2022-00004), by the European Union, co-financed by the European Social Fund (EFOP-3.6.3-VEKOP-16-2017- 00002), by the Lend\"ulet program of the Hungarian Academy of Sciences (MTA), under the grant LP2017-19/2017 and by the Thematic Excellence Program TKP2021-NKTA-62 of the National Research, Development and Innovation Office.}}
\affil{ELTE E\"{o}tv\"{o}s Lor\'{a}nd University, Budapest, Hungary\\
HUN-REN Alfr\'{e}d R\'{e}nyi Institute of Mathematics, Budapest, Hungary}
\begin{document}

\maketitle

\begin{abstract}
The topic of this paper is related to the well-known notion of unit distance graphs. Take a graph with its edges coloured red and blue such that for some $d$ it can be mapped into the plane with all vertices going to distinct points, the red edges to segments of length $1$ and the blue edges to segments of length $d$. We define the range of this graph to be the set of such numbers $d$. It is easy to show that the range of any edge-bicoloured graph consists of finitely many intervals with algebraic endpoints, and we now prove that any such set with a finite positive upper and lower bound is the range of a suitable graph.
\end{abstract}

\section{Introduction}

\begin{definition}
We call a graph a unit distance graph (UDG), if it can be drawn to $\mathbb{R}^2$ so that all vertices go to distinct points and all neighbouring pairs of vertices have Euclidean distance $1$. We call such a drawing a unit distance representation (UDR) of the graph.
\end{definition}

From now on, we suppose that all graphs are finite and simple unless stated otherwise.

\begin{definition}
Call a graph an edge-bicoloured graph (EBG), if there is a fixed colouring of its edges with two colours.
\end{definition}

From now on, we will suppose that these colours are red and blue. Also, for any EBG $G$, call the set of its red and blue edges $E_r(G)$ and $E_b(G)$, respectively.

\begin{definition}
Call an EBG $G$ a $(1,d)$-graph for some $d\in\mathbb{R}_{\ge0}$, if the vertices of the graph can be represented in the plane by distinct points so that those connected with a red edge go to points with distance $1$ and those connected with a blue edge go to points with distance $d$. Call such a representation a $(1,d)$-representation of $G$.
\end{definition}

\begin{definition}
For an EBG $G$, define its range $ran(G)$ as the set of numbers for which $G$ is a $(1,d)$-graph. We may also define the range of an uncoloured graph as the union of the ranges of its edge-bicolourings (although the main result of this paper does not deal with this notion).
\end{definition}

We call a graph with or without an edge-bicolouring a two-distance graph if its range is not empty. Two-distance graphs have been studied in the past in several papers. {\cite{ei}}{\cite{p}}

When speaking about a $(1,d)$-representation of a graph, we often do not differentiate between vertices, edges and their images.

\begin{lemma}\label{lem:subgraph}
For any EBGs $H\subseteq G$ (where the colouring is also inherited from $G$ by $H$), $ran(G)\subseteq ran(H)$.
\end{lemma}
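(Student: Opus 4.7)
The plan is to unpack the definition of $(1,d)$-representation and observe that the property of being one is preserved under restriction to subgraphs, since all the relevant conditions are universally quantified over edges and pairs of vertices.

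Concretely, I would take an arbitrary $d \in ran(G)$ and show $d \in ran(H)$. By definition, there exists a $(1,d)$-representation $\varphi \colon V(G) \to \mathbb{R}^2$ such that $\varphi$ is injective, every red edge of $G$ maps to a segment of length $1$, and every blue edge of $G$ maps to a segment of length $d$. I would then consider the restriction $\varphi|_{V(H)} \colon V(H) \to \mathbb{R}^2$.

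The three things to verify are then routine. Injectivity of $\varphi|_{V(H)}$ follows from injectivity of $\varphi$ together with $V(H) \subseteq V(G)$. For the edge conditions, since the colouring of $H$ is inherited from $G$, every red edge of $H$ is also a red edge of $G$ and therefore maps to a segment of length $1$ under $\varphi|_{V(H)}$, and analogously every blue edge of $H$ maps to a segment of length $d$. Hence $\varphi|_{V(H)}$ is a $(1,d)$-representation of $H$, so $d \in ran(H)$.

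There is no real obstacle here; the statement is essentially a formal consequence of the definitions, and the only thing to be careful about is invoking the hypothesis that $H$ inherits its colouring from $G$, which is exactly what guarantees that no edge of $H$ demands a length constraint not already satisfied by $\varphi$.
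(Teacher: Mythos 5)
Your proposal is correct and follows exactly the paper's argument: restrict the $(1,d)$-representation of $G$ to $V(H)$ and note that injectivity and the edge-length constraints are inherited. You simply spell out the routine verifications that the paper dismisses as trivial.
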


\begin{proof}
If $G$ has a $(1,d)$-representation for some $d$, then by taking the same mapping function for only the vertices of $H$, we trivially get a $(1,d)$-representation of $H$.
\end{proof}

\begin{lemma}\label{lem:union}
For EBGs $G_1$ and $G_2$, $ran(G_1\dot{\cup}G_2)=ran(G_1)\cap ran(G_2)$ (where $G_1\dot{\cup}G_2$ denotes the union of the two graphs, taken on two disjoint sets of vertices).
\end{lemma}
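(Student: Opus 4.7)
The plan is to prove the two inclusions separately. The inclusion $ran(G_1\dot{\cup}G_2)\subseteq ran(G_1)\cap ran(G_2)$ is immediate from Lemma \ref{lem:subgraph}, since both $G_1$ and $G_2$ are subgraphs of $G_1\dot{\cup}G_2$ (with the induced bicolouring), so any $d$ in the range of the union lies in each of the ranges $ran(G_1)$ and $ran(G_2)$.

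For the reverse inclusion, I would start with any $d\in ran(G_1)\cap ran(G_2)$ and pick $(1,d)$-representations $\phi_1\colon V(G_1)\to\mathbb{R}^2$ and $\phi_2\colon V(G_2)\to\mathbb{R}^2$. The natural combined map $\phi(v)=\phi_i(v)$ for $v\in V(G_i)$ clearly assigns the correct distance to every red and blue edge of $G_1\dot{\cup}G_2$, since there are no edges between the two parts; so the only thing that can fail is the distinctness of vertex images where $\phi_1(V(G_1))$ and $\phi_2(V(G_2))$ overlap. I would remedy this by replacing $\phi_2$ by its translate $\phi_2+v$ for a suitable $v\in\mathbb{R}^2$; translations are isometries, so the edge lengths within $G_2$ are unaffected.

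The key observation is that the set of \emph{bad} translation vectors is
\[
B=\{\phi_1(u)-\phi_2(w):u\in V(G_1),\,w\in V(G_2)\},
\]
which is finite (of size at most $|V(G_1)|\cdot|V(G_2)|$), together with the finite set of differences arising within $\phi_2(V(G_2))$ that would be needed to keep $\phi_2$ itself injective — but $\phi_2$ is already injective and translating preserves injectivity, so only $B$ matters. Since $\mathbb{R}^2\setminus B$ is nonempty, pick any $v$ outside $B$; then the combined representation $\phi$ is injective, and hence a $(1,d)$-representation of $G_1\dot{\cup}G_2$, showing $d\in ran(G_1\dot{\cup}G_2)$.

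There isn't really a substantive obstacle here — the whole argument is a one-line application of Lemma \ref{lem:subgraph} in one direction and a generic-translation argument in the other; the only thing to be careful about is remembering that the definition of a $(1,d)$-representation demands distinct vertex images, which is what forces the translation step.
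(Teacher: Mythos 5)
Your proposal is correct and follows essentially the same route as the paper: the containment $ran(G_1\dot{\cup}G_2)\subseteq ran(G_1)\cap ran(G_2)$ via Lemma \ref{lem:subgraph}, and the reverse containment by translating the representation of $G_2$ generically to avoid the finitely many collision-producing vectors (the paper phrases this as choosing one good translate out of $\left\lvert V(G_1)\right\rvert\cdot\left\lvert V(G_2)\right\rvert+1$ candidates, which is the same counting of your bad set $B$). No gaps.
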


\begin{proof}
Suppose that for some $d$, both $G_1$ and $G_2$ have a $(1,d)$-representation. Then any translated version of the $(1,d)$-representation of $G_2$ is also a $(1,d)$-representation of $G_2$. Thus, we may fix a $(1,d)$-representation of $G_1$ and a $(1,d)$-representation of $G_2$ and translate the latter in a way that the sets of images of the vertices become disjoint. Thus, out of $\left\lvert V(G_1)\right\rvert\cdot\left\lvert V(G_2)\right\rvert+1$ translates, there is at least one, which is disjoint from the chosen $(1,d)$-representation of $G_2$. Thus, by drawing these two disjoint $(1,d)$-representations, we get a $(1,d)$-representation for $G=G_1\dot{\cup}G_2$.

Now suppose that for some $d$, at least one of $G_1$ and $G_2$ does not have a $(1,d)$-representation. We can assume WLOG that it is $G_1$. Then $G$ cannot have one either, since $G_1\subseteq G$, thus it would contradict Lemma \ref{lem:subgraph}.
\end{proof}

\begin{lemma}\label{lem:inverse}
If $G^*$ is an EBG obtained from another EBG $G$ by reversing its colouring, $ran(G^*)\cap\mathbb{R}_{>0}=\left\lbrace d\in\mathbb{R}_{>0}\vert\frac{1}{d}\in ran(G)\right\rbrace$.
\end{lemma}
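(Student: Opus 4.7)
The plan is a direct scaling argument: a $(1,d)$-representation of $G^*$ becomes, after a homothety of ratio $1/d$, a $(1,1/d)$-representation of $G$, and conversely. The restriction to $\mathbb{R}_{>0}$ is exactly what allows us to divide by $d$.

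First I would unfold the definitions. Fix $d\in\mathbb{R}_{>0}$ and assume $d\in ran(G^*)$. Let $\varphi\colon V(G^*)\to\mathbb{R}^2$ be a $(1,d)$-representation; since $V(G^*)=V(G)$ as sets, we may view $\varphi$ as a map on $V(G)$. By the definition of $G^*$, the red edges of $G^*$ are precisely the blue edges of $G$, and vice versa. Hence under $\varphi$, every blue edge of $G$ maps to a segment of length $1$ and every red edge of $G$ maps to a segment of length $d$, while $\varphi$ still injects $V(G)$ into the plane.

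Next I would apply the homothety $\psi=\frac{1}{d}\varphi$. Since $d>0$, $\psi$ is still injective on $V(G)$; it multiplies all pairwise distances by $1/d$. Therefore the red edges of $G$ map to segments of length $1$ (because $d\cdot\frac{1}{d}=1$) and the blue edges map to segments of length $\frac{1}{d}$. This is exactly a $(1,\frac{1}{d})$-representation of $G$, so $\frac{1}{d}\in ran(G)$.

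For the reverse inclusion I would run exactly the same argument with the roles of $G$ and $G^*$ swapped. If $\frac{1}{d}\in ran(G)$, pick a $(1,\frac{1}{d})$-representation of $G$ and scale by $d>0$; red edges of $G$ (that is, blue edges of $G^*$) now have length $d$, blue edges of $G$ (that is, red edges of $G^*$) now have length $1$, and the vertex images remain distinct. This yields a $(1,d)$-representation of $G^*$, giving $d\in ran(G^*)\cap\mathbb{R}_{>0}$.

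There is no real obstacle here; the only subtlety worth noting is that the assumption $d>0$ is essential both for $1/d$ to be defined and for the homothety to remain a bijection on the image, which is why the statement restricts to $\mathbb{R}_{>0}$. The condition that vertices go to distinct points is preserved by any nonzero scaling, so no further work is needed.
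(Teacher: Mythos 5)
Your proof is correct and is essentially the same scaling argument the paper gives: a homothety of ratio $1/d$ (or $d$) converts a $(1,d)$-representation of one graph into a $\left(1,\frac{1}{d}\right)$-representation of the other, with the colour swap accounting for the exchange of the two prescribed lengths. You simply spell out in more detail the edge-colour bookkeeping and the preservation of injectivity, which the paper leaves implicit.
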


\begin{proof}
Suppose that for some positive $d$, $G$ has a $(1,d)$-representation. Then we can obtain a $\left(1,\frac{1}{d}\right)$-representation of $G^*$, by scaling it down by $d$. Similarly, we can get a $\left(1,\frac{1}{d}\right)$-representation of $G$ from any $(1,d)$-representation of $G^*$.
\end{proof}

\begin{lemma}\label{lem:nonzero}
For an EBG $G$, $ran(G)=\left[0,+\infty\right)$ or $ran(G)\subseteq\mathbb{R}_{>0}$.
\end{lemma}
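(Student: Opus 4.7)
The statement is equivalent to showing that $0\in ran(G)$ forces $ran(G)=[0,+\infty)$. My plan is to analyse what a $(1,0)$-representation can look like, and then show that such a representation already witnesses membership in $ran(G)$ for every non-negative $d$.

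First I would suppose $0\in ran(G)$ and fix a $(1,0)$-representation $\varphi$ of $G$. By definition, $\varphi$ sends the vertices to pairwise distinct points, so every image $\varphi(u)\ne\varphi(v)$ for $u\ne v$, in particular $\|\varphi(u)-\varphi(v)\|>0$. On the other hand, for every blue edge $uv\in E_b(G)$ we must have $\|\varphi(u)-\varphi(v)\|=0$. These two conditions are incompatible, so $E_b(G)=\emptyset$.

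Now that $G$ has no blue edges, the blue-distance condition in the definition of a $(1,d)$-representation is vacuous, so $\varphi$ itself is a $(1,d)$-representation of $G$ for every $d\in[0,+\infty)$: the vertices are already distinct and the red edges already have length $1$, independently of $d$. Hence $ran(G)=[0,+\infty)$, completing the dichotomy. There is no real obstacle here beyond noticing that the ``distinct points'' clause in the definition of a representation is what rules out $d=0$ in the presence of blue edges; the rest is an immediate reuse of the same drawing.
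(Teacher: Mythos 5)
Your proof is correct and follows essentially the same route as the paper: the presence of a blue edge makes a $(1,0)$-representation impossible because distinct vertices cannot be at distance $0$, and in the absence of blue edges any representation works for every $d\ge0$. No issues.
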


\begin{proof}
If the graph $G$ has at least one blue edge, then $0\notin ran(G)$, since otherwise in any $(1,0)$-representation, it would have coincident vertices, which contradicts the definition of $(1,d)$-representations. So $0\in ran(G)$ is only possible if $E_b(G)=\emptyset$ and $V(G)\cup E_r(G)$ forms a UDG in which case $ran(G)=\left[0,+\infty\right)$.
\end{proof}

Deciding whether a number $d$ is in the range of an EBG or not is $\mathbb{R}$-complete, since deciding whether a graph is a UDG  or not is $\mathbb{R}$-complete. {\cite{sch}}

$\chi\left(\mathbb{R}^2\right)$ denotes the minimal number of colours needed to colour $\mathbb{R}^2$ without a monochromatic pair of distance $1$. Finding $\chi\left(\mathbb{R}^2\right)$ is a famous problem {\cite{dg}} and Bukh conjectured {\cite{b}} that by also forbidding a transcendental distance, we get the same number. If true, this could make it interesting to find graphs whose range only contains a transcendental number.

\begin{definition}\label{def:semialgebraic}
Take the set of solutions $(x_1,...,x_n)$ to a finite sequence of polynomial equations and inequalities over some field $\mathbb{F}$ of the form $p(x_1,...,x_n)=0$ and $p(x_1,...,x_n)>0$. If a subset of $\mathbb{F}^n$ can be generated as the union of such sets, it is called a {\it semialgebraic set} over $\mathbb{F}^n$. $S\subseteq\mathbb{Q}$ is semialgebraic exactly if it is the union of finitely many intervals with algebraic endpoints (with some endpoints possibly being open, while other being possibly closed and some intervals possibly having length $0$). We will define the notion {\it extended semialgebraic set} for a subset of $\mathbb{R}$ with the same property: if it is the union of finitely many intervals with algebraic endpoints.
\end{definition}

\begin{proposition}\label{lem:laczkovich}
The range of an EBG $G$ is always an extended semialgebraic set.\footnote{Miklós Laczkovich, personal communication through Dömötör Pálvölgyi \url{https://dustingmixon.wordpress.com/2018/09/14/polymath16-eleventh-thread-chromatic-numbers-of-planar-sets/\#comment-6308}}
\end{proposition}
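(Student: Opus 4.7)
The plan is to realise $ran(G)$ as the projection of an explicit semialgebraic set in a higher-dimensional space, and then invoke Tarski--Seidenberg quantifier elimination to conclude that the projection is itself semialgebraic (over $\mathbb{Q}$), from which the claimed description as a finite union of intervals with algebraic endpoints is immediate.

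More concretely, let $V(G)=\{v_1,\dots,v_n\}$. I identify a candidate $(1,d)$-representation with a tuple $(x_1,y_1,\dots,x_n,y_n,d)\in\mathbb{R}^{2n+1}$ and let $S\subseteq\mathbb{R}^{2n+1}$ be the set cut out by the following conditions:
\begin{align*}
(x_i-x_j)^2+(y_i-y_j)^2-1 &= 0 \quad \text{for every } \{v_i,v_j\}\in E_r(G),\\
(x_i-x_j)^2+(y_i-y_j)^2-d^2 &= 0 \quad \text{for every } \{v_i,v_j\}\in E_b(G),\\
(x_i-x_j)^2+(y_i-y_j)^2 &> 0 \quad \text{for every } i\neq j,\\
d &\ge 0.
\end{align*}
All of these are polynomial (in)equalities with integer coefficients, so $S$ is semialgebraic over $\mathbb{Q}$. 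By construction, the projection $\pi\colon\mathbb{R}^{2n+1}\to\mathbb{R}$ onto the last coordinate satisfies $\pi(S)=ran(G)$: a point of $S$ records a valid $(1,d)$-representation of $G$, and conversely any $(1,d)$-representation gives rise to such a point.

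Next I apply the Tarski--Seidenberg theorem: the image of a semialgebraic set under a polynomial map (in particular, under a coordinate projection) is semialgebraic over the same field, so $ran(G)$ is a semialgebraic subset of $\mathbb{R}$ defined by polynomial (in)equalities with rational (hence algebraic) coefficients. It remains to translate this into the ``extended semialgebraic'' language of Definition~\ref{def:semialgebraic}. Any semialgebraic subset of $\mathbb{R}$ can be described by a Boolean combination of sign conditions on finitely many univariate polynomials $p_1,\dots,p_k\in\mathbb{Q}[d]$; subdividing $\mathbb{R}$ at the finitely many real roots of the $p_\ell$ produces intervals on which every $p_\ell$ has constant sign, and $ran(G)$ is the union of those intervals (possibly together with some of the breakpoints) on which all sign conditions hold. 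Since the breakpoints are real roots of polynomials with rational coefficients, they are algebraic, and we obtain a representation of $ran(G)$ as a finite union of intervals with algebraic endpoints, as required.

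The conceptual content of the argument is carried entirely by Tarski--Seidenberg; the only ``obstacle'' is to set up the semialgebraic description so that existence of the auxiliary coordinates $x_i,y_i$ corresponds exactly to $d\in ran(G)$, which the formulation above accomplishes directly (the distinctness condition is handled by the strict inequalities, which are allowed by the definition of semialgebraic set). Everything else is the standard cell decomposition of a semialgebraic subset of $\mathbb{R}$.
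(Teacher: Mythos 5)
Your proposal is correct and follows essentially the same route as the paper: the same system of polynomial (in)equalities in $\mathbb{R}^{2n+1}$ cutting out the set of $(1,d)$-representations, followed by Tarski--Seidenberg applied to the projection onto the $d$-coordinate. The only difference is that you spell out the final translation into a finite union of intervals with algebraic endpoints, which the paper leaves implicit.
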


\begin{proof}
Let $V(G)=\left\lbrace v_1,v_2,...,v_{\left\lvert V(G)\right\rvert}\right\rbrace$. Also, take a Euclidean coordinate system on $\mathbb{R}^2$. Then for any $d$, a function $\varphi:V(G)\rightarrow\mathbb{R}^2$ for which $\varphi\left(v_i\right)=\left(x_i,y_i\right)$ ($1\le i\le\left\lvert V(G)\right\rvert$) determines a $(1,d)$-representation of $G$ exactly if the following polynomial inequalities all hold simultaneously:
\begin{gather*}
\left(x_i-x_j\right)^2+\left(y_i-y_j\right)^2=1\text{ if }\left(v_i,v_j\right)\in E_r(G), \\
\left(x_i-x_j\right)^2+\left(y_i-y_j\right)^2=d^2\text{ if }\left(v_i,v_j\right)\in E_b(G), \\
\left(x_i-x_j\right)^2+\left(y_i-y_j\right)^2>0\ (1\le i<j\le\left\lvert V(G)\right\rvert), \\
d\ge0
\end{gather*}

Thus, if we examine the set of points $\left(d,x_1,y_1,x_2,y_2,...,x_{\left\lvert V(G)\right\rvert},y_{\left\lvert V(G)\right\rvert}\right)$ in $\mathbb{R}^{2\left\lvert V(G)\right\rvert+1}$, they form an extended semialgebraic set $S$ there. From this, we can prove the statement using the Tarski--Seidenberg theorem \cite{t,s}.

\end{proof}

Our main result says this condition is tight if $ran(G)$ has positive lower and upper bounds:

\begin{theorem}\label{thm:main}
For a set $S\subseteq\mathbb{R}_{>0}$ with a positive lower and upper bound ($\lambda$ and $\upsilon$), there exists an EBG $G$ with $ran(G)=S$ if and only if $S$ is semialgebraic.
\end{theorem}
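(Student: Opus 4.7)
The forward direction ($ran(G)=S$ implies $S$ semialgebraic) follows immediately from Proposition \ref{lem:laczkovich}: the range is extended semialgebraic, and the assumption $S\subseteq[\lambda,\upsilon]$ with $\lambda,\upsilon>0$ forces it to be a finite union of bounded intervals with algebraic endpoints, i.e., semialgebraic in the narrow sense.

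For the converse, my plan is to write $S=\bigcup_{i=1}^k I_i$ as a finite union of intervals with algebraic endpoints (allowing the degenerate cases $I_i=\{a_i\}$) and produce, in two steps, an EBG $G$ with $ran(G)=S$. Lemma \ref{lem:inverse} can halve the casework by letting me invert ranges (intervals with all endpoints $<1$ and those with endpoints $>1$ can be handled symmetrically), and Lemma \ref{lem:union} supplies intersections for free.

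\textbf{Step 1: Atomic intervals.} For each interval type $[a,b]$, $(a,b]$, $[a,b)$, $(a,b)$, $\{a\}$ with algebraic endpoints in $\mathbb{R}_{>0}$, I would construct an EBG whose range is exactly that interval. The building blocks are standard: a path of two red edges closed by a blue edge already realizes $(0,2]$, and chaining such gadgets lets one express any algebraic endpoint via an iterated system of polynomial constraints, so that the defining minimal polynomial of an algebraic $a$ becomes a distance equation in a rigid subconfiguration. Openness versus closedness of endpoints is enforced by inserting subgraphs that become degenerate (forcing a forbidden vertex coincidence) precisely at the boundary $d$-values we wish to exclude. Taking disjoint unions of these atoms and using Lemma \ref{lem:union} then produces a graph whose range is exactly $I_i$.

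\textbf{Step 2: Combining into a union.} Given $G_{I_1},\dots,G_{I_k}$, I need a single graph $G$ with $ran(G)=\bigcup_i ran(G_{I_i})$. This is where I expect the main obstacle, because Lemma \ref{lem:union} only yields intersections and graph constraints are fundamentally conjunctive. My first attempt would be to design a "switch" subgraph — a rigid-up-to-reflection-or-rotation configuration with $k$ geometric realizations, each of which, through attached edges, forces the rest of $G$ to emulate exactly the constraints of one chosen $G_{I_i}$, with the other $G_{I_j}$'s rendered trivially satisfiable (e.g., their red edges collapsed to length $1$ automatically by the selector). The technical subtlety is making sure that no "mixed" or "in-between" realization of the switch inadvertently enlarges the range beyond $\bigcup_i ran(G_{I_i})$; this likely requires carefully chosen auxiliary vertices and distance constraints whose only solutions correspond to the $k$ discrete choices. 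A plausible alternative route would be to bypass the union decomposition altogether and directly realize sets of the form $\{d>0:p(d)\ge 0\}$ — which for a single polynomial $p$ is already a union of intervals — by encoding $p$ as a rigid framework equation and using a sum-of-squares or Positivstellensatz-style translation of nonnegativity into geometric realizability. Either way, this disjunction step is the delicate technical core, whereas the atomic-interval step should be technical but essentially routine.
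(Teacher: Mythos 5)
Your forward direction is exactly the paper's (Proposition \ref{lem:laczkovich} plus the boundedness hypothesis), and you have correctly located the crux of the converse: graph constraints are conjunctive, so the difficulty is realizing disjunctions. But both steps of your plan have genuine gaps. In Step 1 you call the construction of an EBG whose range is exactly a prescribed interval $[a,b]$ with algebraic endpoints ``essentially routine''; it is not, and the paper never performs it. The obstruction is that forcing an algebraic number $a$ to appear as an endpoint requires encoding a polynomial vanishing at $a$ with integer coefficients, and the other real roots of that polynomial (the conjugates of $a$, plus whatever your gadget introduces) then create unwanted components of the range that you cannot simply wish away. In Step 2 you do not actually construct the $k$-state switch, and a switch that renders the non-selected $G_{I_j}$ ``trivially satisfiable'' while admitting no mixed or in-between realizations is precisely the hard part; asserting its existence is close to assuming the conclusion.

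The paper's route is much closer to your parenthetical alternative, but with an essential extra twist that makes it work. Proposition \ref{prop:algebra} rewrites $S$ not as a union of intervals but as a finite \emph{intersection} of sets $S_{\zeta}(p,L,U)=\left\lbrace x>0\,:\,p(x)\ge 0\ \lor\ x\le L\ \lor\ x\ge U\right\rbrace$ (or with strict inequality) for even integer polynomials $p$; each window $(L,U)$ is a small neighbourhood of a single endpoint $\rho_i$, which is exactly what neutralizes the conjugate-root contamination above. Proposition \ref{prop:main} realizes $S_0(p,0,+\infty)$ as a range by a rigid gadget that forces powers of a unit complex number $\varepsilon$ with $\left\lvert N(1+\varepsilon)\right\rvert=d$, so that three vertices acquire pairwise distances $p_p(d)$, $p_p(d)$ and $2p_n(d)$, and the triangle inequality encodes $p(d)=p_p(d)-p_n(d)\ge0$ --- no Positivstellensatz is needed, only positivity of the coefficients of $q_p$ and $q_n$ as polynomials in $\varepsilon$. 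The only disjunction ever realized is the two-state one of Proposition \ref{prop:valami}: a switch graph $C$ that is ``turned on'' or ``turned off'', adjoining the tails $(0,L_a]$ and $[U_a,+\infty)$ to a given range under boundedness hypotheses, which is far more modest than a general $k$-way union of arbitrary interval graphs. Lemma \ref{lem:union} then intersects everything. So your instinct about where the difficulty lies is right, but your proposal leaves that difficulty unresolved, and the decomposition you start from (a union of atomic intervals) is the wrong one to make it tractable.
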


We first start with some preliminary algebraic statements, which reduce Theorem \ref{thm:main} to a slightly more natural statement (Proposition \ref{prop:main}) which we can prove using various graph tools. These are summarized in the beginning of Section \ref{sec:proof}.

\section{Preliminary algebraic statements}\label{sec:algebra}

\begin{definition}
Call a polynomial {\bf even}, if all of its coefficients with odd index are $0$. In other words, a polynomial $p$ is even, if it is an even function ($p(x)=p(-x)$.)
\end{definition}

\begin{definition}
Take a polynomial $p$, and $L\le U$, $\left\lbrace L,U\right\rbrace\subset\mathbb{R}_{\ge0}\cup\left\lbrace+\infty\right\rbrace$. Define:
\begin{align*}
S_0(p,L,U)&=\left\lbrace x\in\mathbb{R}_{>0}\vert\left(p(x)\ge0\right)\lor\left(x\le L\right)\lor\left(x\ge U\right)\right\rbrace\hbox{\ \ and}\\
S_1(p,L,U)&=\left\lbrace x\in\mathbb{R}_{>0}\vert\left(p(x)>0\right)\lor\left(x\le L\right)\lor\left(x\ge U\right)\right\rbrace\hbox{\ \ (Figure \ref{polinom11polinom12}).}
\end{align*}
\end{definition}

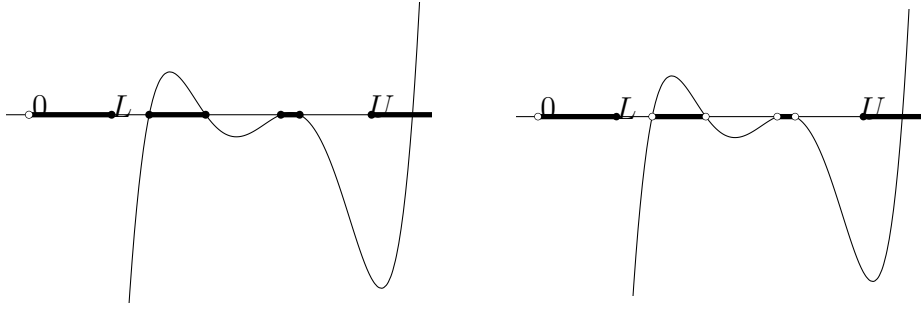
\begin{figure}
\begin{minipage}{.48\textwidth}
	\centering
	\usetikzlibrary{arrows}
\definecolor{ffffff}{rgb}{1,1,1}
\begin{tikzpicture}[line cap=round,line join=round]
\clip(-1.4,-2.5) rectangle (4.25,1.5);
\draw[smooth,samples=100,domain=-1:5] plot(\x,{1/60*(2*(\x)-1)*(2*(\x)-2.5)*(2*(\x)-4.5)*(2*(\x)-5)*(2*(\x)-8)});
\draw [line width=2pt] (0.5,0)-- (1.25,0);
\draw [line width=2pt] (2.25,0)-- (2.5,0);
\draw [domain=-3.81467:12.91754] plot(\x,{(-0-0*\x)/0.75});
\draw [line width=2pt] (-1.0984,0)-- (0,0);
\draw [line width=2pt,domain=3.455872535873332:12.917542058878636] plot(\x,{(-0-0*\x)/0.54413});
\begin{scriptsize}
\fill [color=black] (0.5,0) circle (1.5pt);
\fill [color=black] (1.25,0) circle (1.5pt);
\fill [color=black] (2.25,0) circle (1.5pt);
\fill [color=black] (2.5,0) circle (1.5pt);
\fill [color=black] (0,0) circle (1.5pt);
\draw[color=black] (0.13297,0.11773) node {\large{$L$}};
\fill [color=black] (-1.0984,0) circle (1.5pt);
\fill [color=ffffff] (-1.0984,0) circle (1.25pt);
\draw[color=black] (-0.95354,0.11773) node {\large{$0$}};
\fill [color=black] (3.45587,0) circle (1.5pt);
\draw[color=black] (3.60074,0.11773) node {\large{$U$}};
\end{scriptsize}
\end{tikzpicture}
\end{minipage}
\begin{minipage}{.48\textwidth}
	\centering
	\usetikzlibrary{arrows}
\definecolor{ffffff}{rgb}{1,1,1}
\begin{tikzpicture}[line cap=round,line join=round,scale=0.95]
\clip(-1.4,-2.5) rectangle (4.25,1.5);
\draw[smooth,samples=100,domain=-1:5] plot(\x,{1/60*(2*(\x)-1)*(2*(\x)-2.5)*(2*(\x)-4.5)*(2*(\x)-5)*(2*(\x)-8)});
\draw [line width=2pt] (0.5,0)-- (1.25,0);
\draw [line width=2pt] (2.25,0)-- (2.5,0);
\draw [domain=-3.81467:12.91754] plot(\x,{(-0-0*\x)/0.75});
\draw [line width=2pt] (-1.0984,0)-- (0,0);
\draw [line width=2pt,domain=3.455872535873332:12.917542058878636] plot(\x,{(-0-0*\x)/0.54413});
\begin{scriptsize}
\fill [color=black] (0.5,0) circle (1.5pt);
\fill [color=ffffff] (0.5,0) circle (1.25pt);
\fill [color=black] (1.25,0) circle (1.5pt);
\fill [color=ffffff] (1.25,0) circle (1.25pt);
\fill [color=black] (2.25,0) circle (1.5pt);
\fill [color=ffffff] (2.25,0) circle (1.25pt);
\fill [color=black] (2.5,0) circle (1.5pt);
\fill [color=ffffff] (2.5,0) circle (1.25pt);
\fill [color=black] (0,0) circle (1.5pt);
\draw[color=black] (0.13297,0.11773) node {\large{$L$}};
\fill [color=black] (-1.0984,0) circle (1.5pt);
\fill [color=ffffff] (-1.0984,0) circle (1.25pt);
\draw[color=black] (-0.95354,0.11773) node {\large{$0$}};
\fill [color=black] (3.45587,0) circle (1.5pt);
\draw[color=black] (3.60074,0.11773) node {\large{$U$}};
\end{scriptsize}
\end{tikzpicture}
\end{minipage}
\caption{A polynomial $p(x)$ with $S_0\left(p,L,U\right)$ (left) and $S_1\left(p,L,U\right)$ (right) denoted by bold}
\label{polinom11polinom12}
\end{figure}

\begin{proposition}\label{prop:algebra}
Take a semialgebraic set $\sigma\subseteq\left[\lambda,\upsilon\right]$. For some $n\in\mathbb{N}$ there exist even polynomials $p_1,...,p_{n+1}$ with integer coefficients and a negative leading coefficient, numbers $L_1,...,L_n,U_1,...,U_n\in\mathbb{Q}_{>0}$ and numbers $\zeta_1,...,\zeta_{n+1}\in\left\lbrace0,1\right\rbrace$ so that\\$\sigma=\left(\bigcap\limits_{i=1}^n{S_{\zeta_i}\left(p_i,L_i,U_i\right)}\right)\cap S_{\zeta_{n+1}}\left(p_{n+1},0,+\infty\right)$.
\end{proposition}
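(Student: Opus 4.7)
The plan is to realize $\sigma$ as such an intersection by passing to complements. The complement of $S_{\zeta_i}(p_i,L_i,U_i)$ in $\mathbb{R}_{>0}$ is $\{x>0:p_i(x)<0\text{ (resp.\ }\le 0\text{ if }\zeta_i=1),\ L_i<x<U_i\}$, so our task reduces to covering $\mathbb{R}_{>0}\setminus\sigma$ by finitely many sets of this form, together with one ``free'' set $\{x>0:p_{n+1}(x)<0\}$ or $\{x>0:p_{n+1}(x)\le 0\}$ coming from $S_{\zeta_{n+1}}(p_{n+1},0,+\infty)$. Since $\sigma\subseteq[\lambda,\upsilon]$ is a finite union of intervals with algebraic endpoints, $\mathbb{R}_{>0}\setminus\sigma$ is a finite union of intervals: two unbounded tails (one near $0$, one near $+\infty$) together with finitely many bounded ``gaps'' (some possibly degenerate to single points).

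I would use the free set to cover both unbounded tails. Pick rationals $r,s$ with $0<r\le\inf\sigma$ and $s\ge\sup\sigma$ (strict if the respective bound is irrational) and set $p_{n+1}(x):=-N(x^2-r^2)(x^2-s^2)$, with $N\in\mathbb{Z}_{>0}$ clearing denominators. This polynomial is even, has integer coefficients and negative leading coefficient, and $\{p_{n+1}<0\}\cap\mathbb{R}_{>0}=(0,r)\cup(s,+\infty)$; a suitable choice of $\zeta_{n+1}\in\{0,1\}$ lets us include $r,s$ or not as needed. Any leftover pieces $[r,\inf\sigma)$ or $(\sup\sigma,s]$ arising when an endpoint of $\sigma$ is irrational are bounded, so they merge into the list of bounded gaps handled next.

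For each bounded gap I construct one $p_i$ (or two, if the gap is half-open or a single point) with $i\le n$. The key construction, for an open gap $(c,d)$ with $c,d>0$ algebraic: let $f_c,f_d\in\mathbb{Z}[x]$ be the primitive minimal polynomials with positive leading coefficient, set $g:=f_cf_d$ (or $g:=f_c$ if $f_c=f_d$) so that $g$ has $c,d$ as simple roots, and then form $h(x):=g(x)g(-x)$, which is automatically even with integer coefficients, because all odd-degree coefficients cancel in the product. Choose $L_i,U_i\in\mathbb{Q}_{>0}$ with $L_i<c<d<U_i$ so tightly that neither $(L_i,U_i)$ nor $(-U_i,-L_i)$ contains any root of $g$ other than $c,d$; this is always possible since $g$ has finitely many roots. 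Then $h$ has simple roots exactly at $c,d$ in $(L_i,U_i)$. Replacing $h$ by $-h$ if the sign is wrong, arrange $h<0$ on $(c,d)$ and $h>0$ on $(L_i,c)\cup(d,U_i)$. Finally, to enforce a negative leading coefficient, if needed multiply by $(M^2-x^2)$ for a large integer $M>U_i$: this factor is even with integer coefficients, is positive on $(L_i,U_i)$ (so the sign pattern there is preserved), and has leading coefficient $-1$, thus flipping the parity of the degree and the sign of the leading coefficient. The result is the desired $p_i$, with $\zeta_i=0$ giving complement $(c,d)$. Closed gaps $[c,d]$ use the same $p_i$ but with $\zeta_i=1$; half-open gaps split as an open interval together with a single point; and a single point $\{c\}$ is realized by $p_i(x):=h(x)^2(M^2-x^2)$ with $\zeta_i=1$, which is nonnegative on $(L_i,U_i)$ with equality only at $c$.

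The main obstacle is juggling the four simultaneous conditions on each $p_i$: even, integer coefficients, negative leading coefficient, and a prescribed sign pattern on $(L_i,U_i)$. The step $g\mapsto g(x)g(-x)$ cleanly decouples evenness and integrality from the root-prescription task, and multiplication by $(M^2-x^2)$ gives independent control over the sign of the leading coefficient without disturbing the sign pattern on $(L_i,U_i)$. The remaining bookkeeping---choosing $L_i,U_i$ rationally to exclude the finitely many extraneous real roots of $g$ (the Galois conjugates of $c,d$ and their negatives), and assembling the resulting sets into a single intersection---is routine.
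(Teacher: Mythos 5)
Your overall strategy---passing to complements, spending one ``free'' set $S_{\zeta_{n+1}}(p_{n+1},0,+\infty)$ on the two unbounded tails, symmetrizing minimal polynomials via $g(x)g(-x)$, and fixing the leading coefficient with a factor $(M^2-x^2)$---runs parallel to the paper's. But the per-gap construction has a genuine gap: the required choice of $L_i,U_i$ with no root of $g$ in $(L_i,U_i)\cup(-U_i,-L_i)$ other than $c,d$ need not exist, and the claim that $c,d$ are simple roots of $h(x)=g(x)g(-x)$ can fail. Two concrete failure modes. First, if $-c$ is a Galois conjugate of $c$ (e.g.\ $c=\sqrt2$, $f_c(x)=x^2-2$), then $f_c(-x)$ also vanishes at $c$, so $c$ is a root of $h$ of multiplicity at least $2$; $h$ then does not change sign at $c$, so it cannot be positive on $(L_i,c)$ and negative just to the right of $c$. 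This is precisely the case split in the paper's Lemma \ref{lem:evenpolynomial}: when $\mu_\alpha(-\alpha)=0$, the minimal polynomial is already even and must be used directly rather than multiplied by its reflection. Second, a conjugate of $c$ or $d$ (or the negative of one) may lie strictly inside the gap: for a gap with endpoints $c=1-\sqrt{1/2}$ and $d=2$, the conjugate $1+\sqrt{1/2}$ of $c$ lies in $(c,d)$, so $h$ vanishes there and cannot be strictly negative on all of $(c,d)$; that point then wrongly survives in $S_0(p_i,L_i,U_i)$ and hence in the intersection. Shrinking $(L_i,U_i)$ cannot repair either problem, because the window is forced to contain all of $[c,d]$, and $(-U_i,-L_i)$ is forced to contain $-c$.

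The paper sidesteps both issues by attaching one polynomial to each \emph{endpoint} $\rho_i$ rather than to each gap: its window $(L_i,U_i)$ is a small neighbourhood of the single point $\rho_i$, so every extraneous root can be excluded by shrinking, and the interiors of the gaps---which now lie between consecutive windows---are swept up by the last polynomial $p_{n+1}$, whose roots sit at the chosen rational numbers $L_i,U_i$ and whose sign pattern is therefore fully controlled. If you localize your construction the same way (one $p_i$ per endpoint, using Lemma \ref{lem:evenpolynomial} to obtain an even integer polynomial with a simple root there), the rest of your argument---the tail polynomial $-N(x^2-r^2)(x^2-s^2)$, the $(M^2-x^2)$ parity fix, and the squaring trick for isolated points---goes through.
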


\begin{proof}
First, we will need the following lemma.

\begin{lemma}\label{lem:evenpolynomial}
For every algebraic number $\alpha$, there exists an even polynomial $\omega_{\alpha}(x)$ with integer coefficientes and a positive leading coefficient, which has $\alpha$ as a simple root.
\end{lemma}

\begin{proof}
The minimal polynomial of $\alpha$ (call it $\mu_{\alpha}$) has $\alpha$ as a simple root. If it does not have $-\alpha$ as a root, then we can take $\omega_{\alpha}(x)=\mu_{\alpha}(x)\cdot\mu_{\alpha}(-x)$, whose multiset of roots is exactly the union of the multiset of the roots of $\mu_{\alpha}$ and the multiset of the additive inverses of the roots of $\mu_{\alpha}$. Thus, if $\mu_{\alpha}(x)$ had $\alpha$ as a simple root and did not have $-\alpha$ as a root at all, then $\omega_{\alpha}(x)$ still has $\alpha$ as a simple root. And since it is trivially an even polynomial, it satisfies the criteria.

Now suppose that $\mu_{\alpha}$ does have $-\alpha$ as a root. Then $\mu_{\alpha}$ is even as otherwise $\mu_{\alpha}(x)$ and $\mu_{\alpha}(-x)$ would be linearly independent polynomials of the same degree, both of which have $\alpha$ as a root. Thus, they would have a linear combination with a smaller degree, and which still has $\alpha$ as a root, which contradicts $\mu_{\alpha}$ being the minimal polynomial of $\alpha$. Thus, $\omega_{\alpha}(x)=\mu_{\alpha}(x)$ satisfies the conditions.

We can assume without loss of generality that $\omega_{\alpha}(x)$ has a positive leading coefficient, otherwise we take its additive inverse.
\end{proof}

Now continue the proof of Proposition \ref{prop:algebra}. Take a partition of $\sigma$ into a set $\mathcal{I}$ of disjoint intervals such that $\left\lvert\mathcal{I}\right\rvert$ is minimal and denote the finite positive numbers that occur as the endpoint of at least one interval from $\mathcal{I}$ by $\rho_1<\rho_2<...<\rho_n$. This means that using the notations $\rho_0=0$ and $\rho_{n+1}=+\infty$, the intervals $\left(\rho_i,\rho_{i+1}\right) (0\le i\le n)$ either fully belong to $\sigma$ or are fully outside of it, otherwise there would be an endpoint in the interior of the particular interval, which contradicts the assumption that $\rho_1,\rho_2,...,\rho_n$ are the only finite positive endpoints.

Now we will separate six types of $\rho_i$:

1) $\left(\rho_{i-1},\rho_i\right)\nsubseteq\sigma$, $\rho_i\notin\sigma$ and $\left(\rho_i,\rho_{i+1}\right)\subseteq\sigma$

2) $\left(\rho_{i-1},\rho_i\right)\nsubseteq\sigma$, $\rho_i\in\sigma$ and $\left(\rho_i,\rho_{i+1}\right)\nsubseteq\sigma$

3) $\left(\rho_{i-1},\rho_i\right)\nsubseteq\sigma$, $\rho_i\in\sigma$ and $\left(\rho_i,\rho_{i+1}\right)\subseteq\sigma$

4) $\left(\rho_{i-1},\rho_i\right)\subseteq\sigma$, $\rho_i\notin\sigma$ and $\left(\rho_i,\rho_{i+1}\right)\nsubseteq\sigma$

5) $\left(\rho_{i-1},\rho_i\right)\subseteq\sigma$, $\rho_i\notin\sigma$ and $\left(\rho_i,\rho_{i+1}\right)\subseteq\sigma$

6) $\left(\rho_{i-1},\rho_i\right)\subseteq\sigma$, $\rho_i\in\sigma$ and $\left(\rho_i,\rho_{i+1}\right)\nsubseteq\sigma$

The above are the only possibilities, since because of the minimality of $n$, it is not possible for any $\rho_i$ ($1\le i\le n$) that $\left(\rho_{i-1},\rho_i\right)\nsubseteq\sigma$, $\rho_i\nsubseteq\sigma$ and $\left(\rho_i,\rho_{i+1}\right)\nsubseteq\sigma$ and similarly, it is also not possible that $\left(\rho_{i-1},\rho_i\right)\subseteq\sigma$, $\rho_i\subseteq\sigma$ and $\left(\rho_i,\rho_{i+1}\right)\subseteq\sigma$.

Now we will construct polynomials $p_i$ ($1\le i\le n$) and numbers $\zeta_i$ using the following table:

\begin{center}
\begin{tabular}[t]{ | c | c | c | c | }
\hline
$\rho_i$&$p_i(x)$ if $\omega'_{\rho_i}(\rho_i)>0$&$p_i(x)$ if $\omega'_{\rho_i}(\rho_i)<0$&$\zeta_i$\\
\hline
Type 1&$-\omega_{\rho_i}(x)\cdot\left(x^2-\left(\left\lfloor\rho_i\right\rfloor+1\right)^2\right)$&$-\omega_{\rho_i}(x)$&$1$\\
\hline
Type 2&$-\left(\omega_{\rho_i}(x)\right)^2$&$-\left(\omega_{\rho_i}(x)\right)^2$&$0$\\
\hline
Type 3&$-\omega_{\rho_i}(x)\cdot\left(x^2-\left(\left\lfloor\rho_i\right\rfloor+1\right)^2\right)$&$-\omega_{\rho_i}(x)$&$0$\\
\hline
Type 4&$-\omega_{\rho_i}(x)$&$-\omega_{\rho_i}(x)\cdot\left(x^2-\left(\left\lfloor\rho_i\right\rfloor+1\right)^2\right)$&$1$\\
\hline
Type 5&$-\left(\omega_{\rho_i}(x)\right)^2\cdot\left(x^2-\left(\left\lfloor\rho_i\right\rfloor+1\right)^2\right)$&$-\left(\omega_{\rho_i}(x)\right)^2\cdot\left(x^2-\left(\left\lfloor\rho_i\right\rfloor+1\right)^2\right)$&$1$\\
\hline
Type 6&$-\omega_{\rho_i}(x)$&$-\omega_{\rho_i}(x)\cdot\left(x^2-\left(\left\lfloor\rho_i\right\rfloor+1\right)^2\right)$&$0$\\
\hline
\end{tabular}
\end{center}

(Note that since $\rho_i$ is a simple root of $\omega'_{\rho_i}$, $\omega'_{\rho_i}\left(\rho_i\right)$ cannot be $0$.)

These polynomials are even and have a negative leading coefficient.

Also, for all $i$ ($1\le i\le n$), $\omega_{\rho_i}$ has $\rho_i$ as a simple root, while $\left(\omega_{\rho_i}\right)^2$ has $\rho_i$ as a root with multiplicity $2$. And $\left(x^2-\left(\left\lfloor\rho_i\right\rfloor+1\right)^2\right)$ has exactly two roots: $\pm\left(\left\lfloor\rho_i\right\rfloor+1\right)$. Thus, multiplying a polynomial with $\left(x^2-\left(\left\lfloor\rho_i\right\rfloor+1\right)^2\right)$ does not change the multiplicity of $\rho_i$ as its root, but reverses the sign of its derivative in $\rho_i$. Also, taking the additive inverse of a polynomial also does not change the multiplicity of $\rho_i$ as its root, but reverses the sign of its derivative in $\rho_i$.

And these properties combined prove that for a sufficiently small neighbourhood $N_i$ of $\rho_i$, $S_{\zeta_i}\left(p,0,+\infty\right)\cap N_i=\sigma\cap N_i$. Thus, if we choose $L_i$ and $U_i$ as arbitrary rational numbers within $\left(\rho_{i-1},\rho_i\right)\cap N_i$ and $\left(\rho_i,\rho_{i+1}\right)\cap N_i$, respectively, then $S_{\zeta_i}\left(p_i,L_i,U_i\right)\cap\left(L_i,U_i\right)=\sigma\cap\left(L_i,U_i\right)$ and $S_{\zeta_i}\left(p_i,L_i,U_i\right)$ covers all numbers outside $\left(L_i,U_i\right)$. Thus, the only possible differences of $\bigcap\limits_{i=1}^n{S_{\zeta_i}\left(p_i,L_i,U_i\right)}$ from $\sigma$ are those $\left(L_i,U_{i+1}\right)$ intervals, which exist ($U_{i+1}$ can be smaller than $L_i$) and are not part of $\sigma$. So if we take $p_{n+1}(x)$ as a multiple of the rational polynomial $\left(x^2-L_1^2\right)\cdot\left(x^2-U_n^2\right)\cdot\prod\limits_{i=1,\left(\rho_i,\rho_{i+1}\right)\in\sigma}^{n-1}{\left(x^2-U_i^2\right)\cdot\left(x^2-L_{i+1}^2\right)}$ so that $p_{n+1}(x)$ has integer coefficients and a negative leading coefficient and we also take $\zeta_{n+1}=0$, then the statement of Proposition \ref{prop:algebra} is satisfied.
\end{proof}

\section{Proof of Theorem \ref{thm:main}}\label{sec:proof}

\begin{proposition}\label{prop:main}
For any even polynomial $p\in\mathbb{Z}[x]$ with integer coefficients and a negative leading coefficient, there exists an EBG $G\left(p\right)$, whose range is $S_0\left(p,0,+\infty\right)$.\hfill\hyperlink{pf:main}{Proof}
\end{proposition}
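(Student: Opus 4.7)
The plan is to construct $G(p)$ as an amalgamation of simpler gadgets, each enforcing one algebraic relation on $d$, combined by the disjoint-union lemma (Lemma \ref{lem:union}) together with appropriate vertex identifications.

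Split $p$ by sign of its coefficients: $p(d)=A(d)-B(d)$ with $A,B\in\mathbb{Z}_{\ge0}[x]$ (both automatically even, since $p$ is). For $d>0$ the condition $p(d)\ge 0$ is then equivalent to the distance inequality $A(d)\ge B(d)$, and the task reduces to enforcing this inequality by a graph.

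I would first establish a library of \emph{distance gadgets}: for any $F\in\mathbb{Z}_{\ge0}[x]$, a subgraph with two distinguished terminal vertices that are forced to lie at Euclidean distance $F(d)$ in every $(1,d)$-representation. The atomic cases are a red edge (distance $1$) and a blue edge (distance $d$). A \emph{sum gadget}, built using a rigid collinearity-forcing sub-construction (e.g.\ aligning two terminal segments along a common line via rigid rhombi), produces a gadget for $F_1+F_2$ from gadgets for $F_1$ and $F_2$. A \emph{product gadget}, built via a similar-triangle configuration with parallelism pinned down by rigid auxiliary structure, produces a gadget for $F_1\cdot F_2$. Iterating these operations yields distance gadgets $G_A$ and $G_B$ realizing the polynomials $A(d)$ and $B(d)$ themselves.

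The inequality $A(d)\ge B(d)$ is then encoded by a \emph{bent chain}: attach to the two terminals of $G_B$ (which are at distance $B(d)$) a chain $u=w_0,w_1,\dots,w_m=v$ of auxiliary vertices, in which each consecutive pair $w_i w_{i+1}$ is forced (by a small distance gadget) to a prescribed length so that $\sum_i|w_iw_{i+1}|=A(d)$. Iterated triangle inequality gives $|uv|\le A(d)$, so the chain can close onto the terminals of $G_B$ only when $B(d)\le A(d)$; conversely, if the chain is subdivided finely enough that no individual segment exceeds $A(d)/2$, it can be folded to realize any $|uv|\in[0,A(d)]$, in particular $|uv|=B(d)$. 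The overall graph $G(p)$ is therefore realizable precisely on $\{d>0:A(d)\ge B(d)\}=S_0(p,0,+\infty)$.

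The main obstacle I anticipate is the product gadget, especially for the high-degree monomials $d^{2k}$ with $k\ge 2$: forcing a multiplicative relation between segment lengths via a finite subgraph typically requires pinning down parallelism or angles through auxiliary rigid structure (rhombi, isosceles triangles, or small Pythagorean constructions), and must be verified to work \emph{uniformly} for every $d$ in the target set, across all continuous degrees of freedom of the representation. A secondary technical issue is ensuring that all vertices of $G(p)$ remain distinct in every valid $(1,d)$-representation, which may necessitate additional separation edges or genericity arguments in the gadgets.
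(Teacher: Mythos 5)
Your reduction is the right one and matches the paper's endgame: write $p=p_p-p_n$ with both parts having nonnegative coefficients, realize the two values as forced distances, and let the triangle inequality encode $p_p(d)\ge p_n(d)$ (the paper does exactly this with three vertices at pairwise distances $p_p(d)$, $p_p(d)$ and $2p_n(d)$, and your folded-chain argument for the converse direction is the same idea as Lemma \ref{lem:path}/Lemma \ref{lem:masikirany}). But the core of the proof --- producing, for an arbitrary even $F\in\mathbb{Z}_{\ge0}[x]$, two vertices forced to distance exactly $F(d)$ for \emph{every} $d$ in the target range --- is exactly what you leave unconstructed. The sum and product gadgets you posit are not known to exist in this setting, and the product gadget in particular is doubtful as described: with only the two admissible edge lengths $1$ and $d$ you cannot carry out ruler-and-compass segment arithmetic; you have no primitive for forcing collinearity, betweenness, parallelism of two independently placed segments, or proportionality of triangles. (The only "equal vector" primitive available is the rhombus grid of Lemma \ref{lem:grid0}, which forces $\vec{u_{0,0}u_{0,k}}=\vec{u_{n,0}u_{n,k}}$; it gives you translation, not similarity.) Naming the product gadget as "the main obstacle I anticipate" is an accurate self-diagnosis, but it means the proposal is a plan with its hardest step missing, not a proof.

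The paper's way around this is worth contrasting, because it never multiplies lengths geometrically. The rigid chain $A$ of isosceles triangles forces a unit complex number $\varepsilon$ with $z=N(1+\varepsilon)$, $|z|=d$ and $\mathrm{Arg}(z)=\tfrac{1}{2}\mathrm{Arg}(\varepsilon)$, and forces designated edges to carry the vectors $\varepsilon^j$ (Lemma \ref{lem:a}). Since $\bar z=z\varepsilon^{-1}$, one gets $d^{2i}\varepsilon^{j}=N^{2i}(1+\varepsilon)^{2i}\varepsilon^{j-i}$, a \emph{nonnegative integer} combination of powers of $\varepsilon$ whenever $j\ge i$ (Lemma \ref{lem:qpqn}); so any even polynomial with positive coefficients, times a suitable power of $\varepsilon$, is a sum of forced unit vectors, and its value is the length of the corresponding forced path. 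The multiplication happens algebraically in the binomial expansion, not via similar triangles, and this is also precisely where the evenness hypothesis is used --- a fact your gadget-based plan does not explain, since a genuine product gadget would work just as well for odd powers of $d$. To complete your approach you would either have to exhibit the sum and product gadgets explicitly (and verify rigidity and vertex-distinctness uniformly in $d$), or replace them with something like the paper's $\varepsilon$-chain; as it stands the gap is essential rather than technical.
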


{\it Sketch of the proof:} We define partly virtual EBGs (PVEBG), in which we also allow directed green edges, divided into groups. In a $(1,d)$-representation of a PVEBG, we require green edges from the same group to have the same vector, besides the criteria for EBGs and we define the range of PVEBGs analogously to EBGs. In case some boundedness conditions apply, an EBG with the same range can be created by connecting green edges by red grids.

The most crucial component of creating $G(p)$ is graph $A$ (Figure \ref{rrrbcycles}).

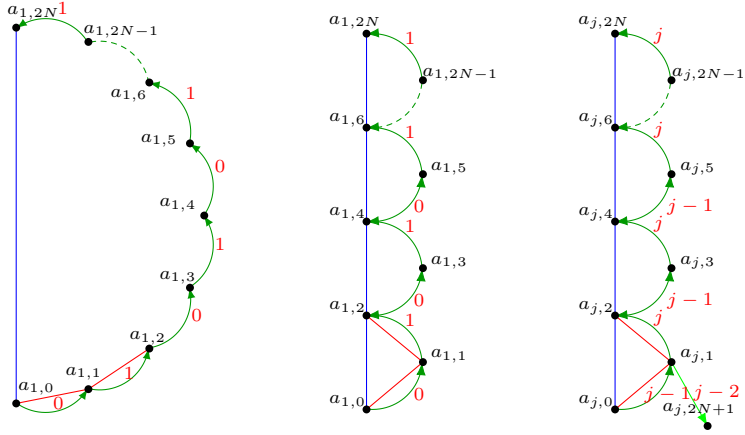
\begin{figure}
\begin{subfigure}
	\centering
	\usetikzlibrary{arrows}
\definecolor{qqzzqq}{rgb}{0,0.6,0}
\definecolor{qqqqff}{rgb}{0,0,1}
\definecolor{ffqqqq}{rgb}{1,0,0}
\begin{tikzpicture}[line cap=round,line join=round]
\clip(-0.36,-0.48) rectangle (3.22,5.86);
\draw [color=ffqqqq] (0,0)-- (0.96,0.19);
\draw [color=ffqqqq] (0.96,0.19)-- (1.77,0.73);
\draw [color=qqqqff] (0,5)-- (0,0);
\draw [shift={(0.39,0.54)},color=qqzzqq,-latex]  plot[domain=4.09:5.73,variable=\t]({1*0.66*cos(\t r)+0*0.66*sin(\t r)},{0*0.66*cos(\t r)+1*0.66*sin(\t r)});
\draw [shift={(1.11,0.84)},color=qqzzqq,-latex]  plot[domain=4.48:6.12,variable=\t]({1*0.66*cos(\t r)+0*0.66*sin(\t r)},{0*0.66*cos(\t r)+1*0.66*sin(\t r)});
\draw [shift={(1.66,1.39)},color=qqzzqq,-latex]  plot[domain=-1.41:0.23,variable=\t]({1*0.66*cos(\t r)+0*0.66*sin(\t r)},{0*0.66*cos(\t r)+1*0.66*sin(\t r)});
\draw [shift={(1.96,2.11)},color=qqzzqq,-latex]  plot[domain=-1.02:0.63,variable=\t]({1*0.66*cos(\t r)+0*0.66*sin(\t r)},{0*0.66*cos(\t r)+1*0.66*sin(\t r)});
\draw [shift={(1.96,2.89)},color=qqzzqq,-latex]  plot[domain=-0.63:1.02,variable=\t]({1*0.66*cos(\t r)+0*0.66*sin(\t r)},{0*0.66*cos(\t r)+1*0.66*sin(\t r)});
\draw [shift={(1.66,3.61)},color=qqzzqq,-latex]  plot[domain=-0.23:1.41,variable=\t]({1*0.66*cos(\t r)+0*0.66*sin(\t r)},{0*0.66*cos(\t r)+1*0.66*sin(\t r)});
\draw [shift={(1.11,4.16)},dash pattern=on 2pt off 2pt,color=qqzzqq]  plot[domain=0.16:1.81,variable=\t]({1*0.66*cos(\t r)+0*0.66*sin(\t r)},{0*0.66*cos(\t r)+1*0.66*sin(\t r)});
\draw [shift={(0.39,4.46)},color=qqzzqq,-latex]  plot[domain=0.55:2.2,variable=\t]({1*0.66*cos(\t r)+0*0.66*sin(\t r)},{0*0.66*cos(\t r)+1*0.66*sin(\t r)});
\begin{scriptsize}
\fill [color=black] (0,0) circle (1.5pt);
\draw[color=black] (0.27,0.2) node {$a_{1,0}$};
\fill [color=black] (0,5) circle (1.5pt);
\draw[color=black] (0.23,5.2) node {$a_{1,2N}$};
\fill [color=black] (0.96,0.19) circle (1.5pt);
\draw[color=black] (0.9,0.4) node {$a_{1,1}$};
\fill [color=black] (1.77,0.73) circle (1.5pt);
\draw[color=black] (1.75,0.87) node {$a_{1,2}$};
\fill [color=black] (2.31,1.54) circle (1.5pt);
\draw[color=black] (2.15,1.68) node {$a_{1,3}$};
\fill [color=black] (2.5,2.5) circle (1.5pt);
\draw[color=black] (2.15,2.64) node {$a_{1,4}$};
\fill [color=black] (2.31,3.46) circle (1.5pt);
\draw[color=black] (1.9,3.5) node {$a_{1,5}$};
\fill [color=black] (1.77,4.27) circle (1.5pt);
\draw[color=black] (1.5,4.1) node {$a_{1,6}$};
\fill [color=black] (0.96,4.81) circle (1.5pt);
\draw[color=black] (1.4,4.95) node {$a_{1,2N-1}$};
\draw[color=red] (0.56,0) node {0};
\draw[color=red] (1.5,0.4) node {1};
\draw[color=red] (2.4,1.17) node {0};
\draw[color=red] (2.71,2.12) node {1};
\draw[color=red] (2.71,3.16) node {0};
\draw[color=red] (2.32,4.13) node {1};
\draw[color=red] (0.62,5.26) node {1};
\end{scriptsize}
\end{tikzpicture}
\end{subfigure}
\begin{subfigure}
	\centering
	\usetikzlibrary{arrows}
\definecolor{qqzzqq}{rgb}{0,0.6,0}
\definecolor{ffqqqq}{rgb}{1,0,0}
\definecolor{uququq}{rgb}{0.25,0.25,0.25}
\definecolor{qqqqff}{rgb}{0,0,1}
\begin{tikzpicture}[line cap=round,line join=round]
\clip(-0.5,-0.4) rectangle (1.75,5.86);
\draw [color=qqqqff] (0,5)-- (0,0);
\draw [color=ffqqqq] (0,0)-- (0.75,0.63);
\draw [color=ffqqqq] (0.75,0.63)-- (0,1.25);
\draw [shift={(0.08,0.66)},color=qqzzqq,-Latex]  plot[domain=4.58:6.23,variable=\t]({1*0.66*cos(\t r)+0*0.66*sin(\t r)},{0*0.66*cos(\t r)+1*0.66*sin(\t r)});
\draw [shift={(0.08,1.91)},color=qqzzqq,-Latex]  plot[domain=4.58:6.23,variable=\t]({1*0.66*cos(\t r)+0*0.66*sin(\t r)},{0*0.66*cos(\t r)+1*0.66*sin(\t r)});
\draw [shift={(0.08,3.16)},color=qqzzqq,-Latex]  plot[domain=4.58:6.23,variable=\t]({1*0.66*cos(\t r)+0*0.66*sin(\t r)},{0*0.66*cos(\t r)+1*0.66*sin(\t r)});
\draw [shift={(0.08,4.41)},dash pattern=on 2pt off 2pt,color=qqzzqq]  plot[domain=4.58:6.23,variable=\t]({1*0.66*cos(\t r)+0*0.66*sin(\t r)},{0*0.66*cos(\t r)+1*0.66*sin(\t r)});
\draw [shift={(0.08,0.59)},color=qqzzqq,-Latex]  plot[domain=0.05:1.7,variable=\t]({1*0.66*cos(\t r)+0*0.66*sin(\t r)},{0*0.66*cos(\t r)+1*0.66*sin(\t r)});
\draw [shift={(0.08,1.84)},color=qqzzqq,-Latex]  plot[domain=0.05:1.7,variable=\t]({1*0.66*cos(\t r)+0*0.66*sin(\t r)},{0*0.66*cos(\t r)+1*0.66*sin(\t r)});
\draw [shift={(0.08,3.09)},color=qqzzqq,-Latex]  plot[domain=0.05:1.7,variable=\t]({1*0.66*cos(\t r)+0*0.66*sin(\t r)},{0*0.66*cos(\t r)+1*0.66*sin(\t r)});
\draw [shift={(0.08,4.34)},color=qqzzqq,-Latex]  plot[domain=0.05:1.7,variable=\t]({1*0.66*cos(\t r)+0*0.66*sin(\t r)},{0*0.66*cos(\t r)+1*0.66*sin(\t r)});
\begin{scriptsize}
\fill [color=black] (0,0) circle (1.5pt);
\draw[color=black] (-0.25,0.1) node {$a_{1,0}$};
\fill [color=black] (0,5) circle (1.5pt);
\draw[color=black] (-0.15,5.15) node {$a_{1,2N}$};
\fill [color=black] (0.75,0.63) circle (1.5pt);
\draw[color=black] (1.1,0.71) node {$a_{1,1}$};
\fill [color=black] (0,1.25) circle (1.5pt);
\draw[color=black] (-0.25,1.35) node {$a_{1,2}$};
\fill [color=black] (0.75,1.88) circle (1.5pt);
\draw[color=black] (1.1,1.96) node {$a_{1,3}$};
\fill [color=black] (0,2.5) circle (1.5pt);
\draw[color=black] (-0.25,2.6) node {$a_{1,4}$};
\fill [color=black] (0.75,3.13) circle (1.5pt);
\draw[color=black] (1.1,3.21) node {$a_{1,5}$};
\fill [color=black] (0,3.75) circle (1.5pt);
\draw[color=black] (-0.25,3.85) node {$a_{1,6}$};
\fill [color=black] (0.75,4.38) circle (1.5pt);
\draw[color=black] (1.25,4.46) node {$a_{1,2N-1}$};
\draw[color=red] (0.7,0.2) node {0};
\draw[color=red] (0.7,1.45) node {0};
\draw[color=red] (0.7,2.7) node {0};
\draw[color=red] (0.58,1.19) node {1};
\draw[color=red] (0.58,2.44) node {1};
\draw[color=red] (0.58,3.69) node {1};
\draw[color=red] (0.58,4.94) node {1};
\end{scriptsize}
\end{tikzpicture}
\end{subfigure}
\begin{subfigure}
	\centering
	\usetikzlibrary{arrows}
\definecolor{qqffqq}{rgb}{0,1,0}
\definecolor{qqzzqq}{rgb}{0,0.6,0}
\definecolor{ffqqqq}{rgb}{1,0,0}
\definecolor{uququq}{rgb}{0.25,0.25,0.25}
\definecolor{qqqqff}{rgb}{0,0,1}
\begin{tikzpicture}[line cap=round,line join=round]
\clip(-0.5,-0.4) rectangle (1.75,5.86);
\draw [color=qqqqff] (0,5)-- (0,0);
\draw [color=ffqqqq] (0,0)-- (0.75,0.63);
\draw [color=ffqqqq] (0.75,0.63)-- (0,1.25);
\draw [shift={(0.08,0.66)},color=qqzzqq,-Latex]  plot[domain=4.58:6.23,variable=\t]({1*0.66*cos(\t r)+0*0.66*sin(\t r)},{0*0.66*cos(\t r)+1*0.66*sin(\t r)});
\draw [shift={(0.08,1.91)},color=qqzzqq,-Latex]  plot[domain=4.58:6.23,variable=\t]({1*0.66*cos(\t r)+0*0.66*sin(\t r)},{0*0.66*cos(\t r)+1*0.66*sin(\t r)});
\draw [shift={(0.08,3.16)},color=qqzzqq,-Latex]  plot[domain=4.58:6.23,variable=\t]({1*0.66*cos(\t r)+0*0.66*sin(\t r)},{0*0.66*cos(\t r)+1*0.66*sin(\t r)});
\draw [shift={(0.08,4.41)},dash pattern=on 2pt off 2pt,color=qqzzqq]  plot[domain=4.58:6.23,variable=\t]({1*0.66*cos(\t r)+0*0.66*sin(\t r)},{0*0.66*cos(\t r)+1*0.66*sin(\t r)});
\draw [shift={(0.08,0.59)},color=qqzzqq,-Latex]  plot[domain=0.05:1.7,variable=\t]({1*0.66*cos(\t r)+0*0.66*sin(\t r)},{0*0.66*cos(\t r)+1*0.66*sin(\t r)});
\draw [shift={(0.08,1.84)},color=qqzzqq,-Latex]  plot[domain=0.05:1.7,variable=\t]({1*0.66*cos(\t r)+0*0.66*sin(\t r)},{0*0.66*cos(\t r)+1*0.66*sin(\t r)});
\draw [shift={(0.08,3.09)},color=qqzzqq,-Latex]  plot[domain=0.05:1.7,variable=\t]({1*0.66*cos(\t r)+0*0.66*sin(\t r)},{0*0.66*cos(\t r)+1*0.66*sin(\t r)});
\draw [shift={(0.08,4.34)},color=qqzzqq,-Latex]  plot[domain=0.05:1.7,variable=\t]({1*0.66*cos(\t r)+0*0.66*sin(\t r)},{0*0.66*cos(\t r)+1*0.66*sin(\t r)});
\draw [color=qqffqq,Latex-] (1.23,-0.22)-- (0.75,0.63);
\begin{scriptsize}
\fill [color=black] (0,0) circle (1.5pt);
\draw[color=black] (-0.25,0.1) node {$a_{j,0}$};
\fill [color=black] (0,5) circle (1.5pt);
\draw[color=black] (-0.15,5.15) node {$a_{j,2N}$};
\fill [color=black] (0.75,0.63) circle (1.5pt);
\draw[color=black] (1.1,0.71) node {$a_{j,1}$};
\fill [color=black] (0,1.25) circle (1.5pt);
\draw[color=black] (-0.25,1.35) node {$a_{j,2}$};
\fill [color=black] (0.75,1.88) circle (1.5pt);
\draw[color=black] (1.1,1.96) node {$a_{j,3}$};
\fill [color=black] (0,2.5) circle (1.5pt);
\draw[color=black] (-0.25,2.6) node {$a_{j,4}$};
\fill [color=black] (0.75,3.13) circle (1.5pt);
\draw[color=black] (1.1,3.21) node {$a_{j,5}$};
\fill [color=black] (0,3.75) circle (1.5pt);
\draw[color=black] (-0.25,3.85) node {$a_{j,6}$};
\fill [color=black] (0.75,4.38) circle (1.5pt);
\draw[color=black] (1.25,4.46) node {$a_{j,2N-1}$};
\draw[color=ffqqqq] (0.7,0.2) node {$j-1$};
\draw[color=ffqqqq] (1,1.45) node {$j-1$};
\draw[color=ffqqqq] (1,2.7) node {$j-1$};
\draw[color=ffqqqq] (0.58,1.22) node {$j$};
\draw[color=ffqqqq] (0.58,2.47) node {$j$};
\draw[color=ffqqqq] (0.58,3.72) node {$j$};
\draw[color=ffqqqq] (0.58,4.97) node {$j$};
\draw[color=ffqqqq] (1.35,0.22) node {$j-2$};
\fill [color=black] (1.23,-0.22) circle (1.5pt);
\draw[color=black] (1.12,0) node {$a_{j,2N+1}$};
\end{scriptsize}
\end{tikzpicture}
\end{subfigure}
\caption{The components of $A$: $A_1$ (left), the only $(1,d)$-representations (up to isometry) of $A_1$ (middle) and of $A_j$ ($2\le j\le deg(p)$) (right) ($N$ is large enough and groups are denoted by numbers).}
\label{rrrbcycles}
\end{figure}

For small enough $d$, $A$ has exactly one $(1,d)$-representation up to transformations which are isometries on the components. If we draw the complex plane so that $\vec{a_{1,0}a_{1,1}}=1$ and $\vec{a_{1,1}a_{1,2}}=\varepsilon$, the members of the group marked by $j$ will have vector $\varepsilon^j$ and $\left\lvert N\cdot\left(1+\varepsilon\right)\right\rvert=d$. This helps constructing points having distance of some even polynomial of $d$, and ultimatley, constructing $G(p)$. 

\begin{proposition}\label{prop:s1}
For any even polynomial $p\in\mathbb{Z}[x]$ with a negative leading coefficient, there exists an EBG $G'\left(p\right)$, whose range is $S_1(p,0,+\infty)$.\hfill\hyperlink{pf:s1}{Proof}
\end{proposition}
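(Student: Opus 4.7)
The plan is to build $G'(p)$ as the disjoint union of the graph $G(p)$ given by Proposition \ref{prop:main} with a single ``root-excluding'' gadget $H$. Since $S_0(p,0,+\infty)\setminus S_1(p,0,+\infty)$ is precisely the finite set $Z$ of positive roots of $p$, and since by Lemma \ref{lem:union}
\[
ran\bigl(G(p)\,\dot{\cup}\,H\bigr) \;=\; S_0(p,0,+\infty)\cap ran(H),
\]
it suffices to design $H$ so that $ran(H) \supseteq \mathbb{R}_{>0}\setminus Z$ (and in particular $ran(H)\cap Z = \emptyset$). The intersection is then exactly $S_0(p,0,+\infty)\setminus Z = S_1(p,0,+\infty)$, as desired.

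To construct $H$: since $p$ has negative leading coefficient, $S_0(p,0,+\infty)$ lies inside some bounded rational interval $(L',U')\subseteq\mathbb{R}_{>0}$. Reusing the PVEBG machinery of Proposition \ref{prop:main}---copies of the graph $A$ joined via red ``grid'' subgraphs---one builds a gadget that, whenever $d\in(L',U')$, rigidly realises two vertices $u,v$ at Euclidean distance $c\cdot|p(d)|$ for some fixed rational $c>0$; for $d\notin(L',U')$ the construction is to be made vacuously representable, exactly as done for the bounded $(L,U)$ sets appearing throughout Proposition \ref{prop:algebra}. Then a $(1,d)$-representation of $H$ exists for $d\in(L',U')$ precisely when $u\ne v$---i.e., when $p(d)\ne 0$---and it exists for every $d\notin(L',U')$; hence $ran(H)=\mathbb{R}_{>0}\setminus Z$, and the construction is complete.

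The main technical step, and the one I expect to be the real obstacle, is the design of $H$: building a PVEBG whose rigidity encodes the distance $c\cdot|p(d)|$ between $u$ and $v$ throughout the whole of $(L',U')$, while simultaneously degenerating to a flexible (and therefore $(1,d)$-representable) graph for $d$ outside $(L',U')$. The first part should mirror what the sketch of Proposition \ref{prop:main} already accomplishes: using $A$-blocks parametrised by $\varepsilon\in\mathbb{C}$ with $|N(1+\varepsilon)|=d$, one can realise any prescribed even polynomial in $d$ as the distance between two constructed points, and applying this construction to $p$ itself produces the pair $u,v$. The second part---making the gadget collapse to something trivially drawable outside $(L',U')$---is precisely the role played by the rational cut-offs $L_i,U_i$ in Proposition \ref{prop:algebra}, and I expect the same trick to carry over verbatim to the $S_1$ setting.
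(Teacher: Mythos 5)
Your reduction is sound: by Lemma \ref{lem:union} it does suffice to find a graph $H$ with $ran(H)\supseteq S_1(p,0,+\infty)$ and $ran(H)\cap Z=\emptyset$, where $Z$ is the set of positive roots of $p$. But the construction of $H$ is the entire content of the proposition, and the step you lean on --- that the $A$-block machinery ``can realise any prescribed even polynomial in $d$ as the distance between two constructed points,'' applied to $p$ itself to get $|u v|=c\cdot|p(d)|$ --- is not something Proposition \ref{prop:main} provides. Lemma \ref{lem:qpqn} only converts a polynomial into a realizable $\varepsilon$-path when, after multiplication by a power of $\varepsilon$, \emph{all coefficients are positive}; that is precisely why the paper splits $p$ into $p_p$ and $p_n$, realizes the three lengths $p_p(d)$, $p_p(d)$, $2p_n(d)$ separately, and compares them with a triangle rather than ever producing a segment of length $|p(d)|$. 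For a sign-changing $p$ the corresponding path would have steps pointing in reversed directions $-\varepsilon^j$, and controlling coincidences among its intermediate vertices (the failure mode the paper explicitly warns about, and which the supporting-vector argument in Lemma \ref{lem:masikirany} is designed to rule out for monotone paths) is exactly what is missing. On top of that, your $H$ must be representable for every $d\notin(L',U')$ --- note $S_0$ need not be bounded away from $0$ when $p(0)>0$ --- which would drag in the whole turn-on/turn-off machinery of Proposition \ref{prop:valami}; attributing that role to the purely algebraic cut-offs of Proposition \ref{prop:algebra} does not supply it.

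The paper's own proof needs none of this: it adds a \emph{single} vertex $b_{10,3}$ to $G(p)$ and a red grid forcing $\vec{b_{10,3}b_{10,2}}=\vec{b_{10,0}b_{10,1}}$, so that $b_{10,0}b_{10,1}b_{10,2}b_{10,3}$ is a parallelogram. The $(1,d)$-representation of $G(p)$ then extends unless $b_{10,3}$ coincides with $b_{10,1}$, which happens exactly when the triangle $b_{10,0}b_{10,1}b_{10,2}$ degenerates, i.e.\ when $2p_p(d)=2p_n(d)$, i.e.\ when $p(d)=0$. This exploits the distinct-vertices requirement directly and removes exactly $Z$ from $S_0(p,0,+\infty)$ with no new gadget. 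If you want to salvage your disjoint-union route, the natural fix is to take $H$ to be $G(p)$ itself augmented in this way --- at which point the disjoint union is redundant.
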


\begin{proposition}\label{prop:valami2}
For an EBG $G$, positive rational numbers $L_a, U_a$ and arbitrary real numbers $L_b, U_b$ ($L_b<L_a<U_a<U_b$), if $ran(G)\cap\left(L_a,U_a\right]\neq\emptyset$, then there exists an EBG $G_{L_a,L_b}^{U_a,U_b}$ for which $ran\left(G_{L_a,L_b}^{U_a,U_b}\right)\cap\left(L_b,U_b\right)=\left(\left(0,L_a\right]\cup ran(G)\cup\left[U_a,+\infty\right)\right)\cap\left(L_b,U_b\right)$.\hfill\hyperlink{pf:valami2}{Proof}
\end{proposition}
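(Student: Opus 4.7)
The goal is to produce an EBG whose range, restricted to $(L_b,U_b)$, is the union $((0,L_a]\cup ran(G)\cup[U_a,+\infty))\cap(L_b,U_b)$. As Lemma~\ref{lem:union} shows, the natural operation of disjoint union produces intersections of ranges, not unions, so the construction must have a genuinely disjunctive character: its $(1,d)$-representations must split into three types corresponding to the three accepted regimes, namely $d\in(L_b,L_a]$, $d\in ran(G)\cap(L_a,U_a)$, and $d\in[U_a,U_b)$.

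My plan is as follows. First, invoke the hypothesis $ran(G)\cap(L_a,U_a]\ne\emptyset$ to fix some $d_0\in(L_a,U_a]$ together with a specific $(1,d_0)$-representation $\phi_0$ of $G$, to serve as a geometric template. Second, construct two ``bypass'' EBGs $B_\downarrow$ and $B_\uparrow$, each with a pair of distinguished anchor vertices, so that $B_\downarrow$ admits a $(1,d)$-representation exactly when $d\le L_a$ and $B_\uparrow$ admits one exactly when $d\ge U_a$, with the anchors varying continuously in $d$. These bypasses can be built from rigid frames of rational length (using that $L_a,U_a\in\mathbb{Q}_{>0}$, so such frames are realizable by red paths) together with blue edges of length $d$ constrained to fit inside or span the frame. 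Third, assemble $G_{L_a,L_b}^{U_a,U_b}$ by attaching $B_\downarrow$ and $B_\uparrow$ to $G$ through the anchors, using $\phi_0$ to align anchor positions with distinguished vertices of $G$. The attachment is arranged so that for $d\in(L_a,U_a)$ neither bypass is representable, forcing the $G$-part to satisfy its own length constraints (hence $d\in ran(G)$), while for $d\le L_a$ or $d\ge U_a$ the active bypass provides enough geometric freedom that the $G$-part can be placed consistently.

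The \emph{main obstacle} lies in the final arrangement: the edges of $G$, being explicitly colored, always impose their length constraints in every $(1,d)$-representation of the assembled graph, so by Lemma~\ref{lem:subgraph} the range of the assembly would be contained in $ran(G)$ whenever $G$ is literally a subgraph. To avoid this, the ``copy of $G$'' appearing in the construction cannot be a rigid subgraph; one must use a more intricate attachment --- for instance, several copies of $G$ merged so that in the extreme regimes they collapse onto one another in a way that destroys the literal $G$-subgraph structure, or ``virtualized'' gadgetry in the spirit of the PVEBGs from the sketch of Proposition~\ref{prop:main} --- so that for $d\le L_a$ and $d\ge U_a$ no valid $G$-substructure is actually forced. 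Once such a construction is in hand, verifying the exact range amounts to a case analysis of all $(1,d)$-representations, exhibiting a representation for every $d$ in each of the three regimes and showing non-representability for $d\in(L_a,U_a)\setminus ran(G)$, and finally intersecting with the window $(L_b,U_b)$ to absorb anything that may happen outside.
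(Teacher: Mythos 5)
You have correctly located the crux: since every edge of an EBG imposes its length constraint in every representation, Lemma~\ref{lem:subgraph} forces the range of the assembled graph into $ran(G)$ whenever $G$ survives as a literal subgraph, so some mechanism must make the unit-length constraints of $G$ \emph{optional}. But your proposal stops exactly where the real work begins: you name this obstacle and then defer its resolution to an unspecified ``more intricate attachment'' (``copies that collapse onto one another'', ``virtualized gadgetry''), and the verification you describe cannot be carried out until an actual construction is given. The missing idea is an explicit OR-gadget. In the paper this is the switch graph $C$ of Proposition~\ref{prop:valami}: a graph with two distinguished vertex pairs $\vartheta_1,\vartheta_2$ such that in \emph{every} $(1,d)$-representation at least one of $\vartheta_1,\vartheta_2$ is a unit vector, and conversely, whichever one is made unit, the other can be prescribed to be any vector of length at most $U/L$. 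One then \emph{deletes} every red edge $e_i$ of $G$, takes an auxiliary graph $D$ with $ran(D)=(0,L]$ (your $B_\downarrow$, built from $L_1$ red edges against $L_2$ collinear length-$d$ edges using $L=L_1/L_2\in\mathbb{Q}$), and for every pair $(e_i,e_j')$ of a deleted red edge of $G$ and a red edge of $D$ inserts a copy $C^{(i,j)}$, coupling its $\vartheta_1$ to the endpoints of $e_i$ and its $\vartheta_2$ to the endpoints of $e_j'$ via grids forcing equal vectors. This realizes $\bigl(\bigwedge_i\lvert e_i\rvert=1\bigr)\lor\bigl(\bigwedge_j\lvert e_j'\rvert=1\bigr)$ in the form $\bigwedge_{i,j}\bigl(\lvert e_i\rvert=1\lor\lvert e_j'\rvert=1\bigr)$, which is what turns the range into a union rather than the intersection produced by Lemma~\ref{lem:union}. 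Merely ``attaching $B_\downarrow$ to $G$ through anchors'', as you propose, leaves the red edges of $G$ intact and cannot escape Lemma~\ref{lem:subgraph}.

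Two further remarks. First, you attempt to splice in both bypasses at once; the paper instead proves only the one-sided statement (Proposition~\ref{prop:valami}, which adjoins $(0,L]$ and truncates above $U$) and obtains Proposition~\ref{prop:valami2} by applying it twice, conjugating by the colour reversal of Lemma~\ref{lem:inverse} so that the upper bypass $[U_a,+\infty)$ becomes a lower bypass at $1/U_a$. A direct two-sided switch would need three states rather than two, so this reduction is a genuine simplification. Second, your template representation $\phi_0$ at some $d_0\in ran(G)\cap(L_a,U_a]$ is close in spirit to how the hypothesis is actually used --- scaling such a representation down yields the relaxed drawings of the $G$-part (red edges of length at most $U/L$) needed when the bypass is active --- but it is not used to ``align anchors''. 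As it stands, the proposal is a statement of intent with the decisive gadget missing, so it does not constitute a proof.
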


Using the notations of Proposition \ref{prop:algebra}, with the help of Proposition \ref{prop:valami2}, we can construct $(1,d)$-graphs $G\left(p_i\right)_{\lambda,L_i}^{\upsilon,U_i}$ for $1\le i\le n$ and $\zeta_i=0$, while in case of $\zeta_i=1$, we construct $G'\left(p_i\right)_{\lambda,L_i}^{\upsilon,U_i}$, whose range coincides with $S_{\zeta_i}(p_i,L_i,U_i)$ on the interval $\left[\lambda,\upsilon\right]$. And finally, we can take $G'\left(p_{n+1}\right)$, whose range is empty outside of $\left[\lambda,\upsilon\right]$, thus the intersection of the ranges of these graphs is $\sigma$ because of Proposition \ref{prop:algebra}. Thus, because of Lemma \ref{lem:union}, their disjoint union has $\sigma$ as its range. So all semialgebraic sets from $\left[\lambda,\upsilon\right]$ are the range of some EBG.\hfill\qed

\subsection{Introducing partly virtual edge-bicoloured graphs}\label{subsec:pvebg}

This section mostly contains long proofs for some technical statements.

\begin{center}
\centering
\usetikzlibrary{arrows}
\definecolor{ffqqqq}{rgb}{1,0,0}
\begin{tikzpicture}[line cap=round,line join=round]
\clip(-0.94,-0.64) rectangle (8.32,6.28);
\draw [color=ffqqqq] (0,0)-- (0.77,0.64);
\draw [color=ffqqqq] (0.77,0.64)-- (1.67,0.21);
\draw [color=ffqqqq] (1.67,0.21)-- (2.67,0.24);
\draw [color=ffqqqq] (2.67,0.24)-- (3.61,0.58);
\draw [color=ffqqqq] (3.61,0.58)-- (4.47,0.07);
\draw [color=ffqqqq] (4.47,0.07)-- (5.47,0.15);
\draw [color=ffqqqq] (5.47,0.15)-- (6.41,-0.19);
\draw [color=ffqqqq] (6.41,-0.19)-- (7.31,0.25);
\draw [color=ffqqqq] (7.31,0.25)-- (6.95,1.18);
\draw [color=ffqqqq] (6.95,1.18)-- (7.36,2.09);
\draw [color=ffqqqq] (7.36,2.09)-- (7.12,3.06);
\draw [color=ffqqqq] (7.12,3.06)-- (7.05,4.06);
\draw [color=ffqqqq] (7.05,4.06)-- (7.33,5.02);
\draw [color=ffqqqq] (7.33,5.02)-- (6.43,4.59);
\draw [color=ffqqqq] (6.43,4.59)-- (5.49,4.92);
\draw [color=ffqqqq] (5.49,4.92)-- (4.5,4.85);
\draw [color=ffqqqq] (4.5,4.85)-- (3.63,5.36);
\draw [color=ffqqqq] (3.63,5.36)-- (2.69,5.02);
\draw [color=ffqqqq] (2.69,5.02)-- (1.69,4.99);
\draw [color=ffqqqq] (1.69,4.99)-- (0.79,5.42);
\draw [color=ffqqqq] (0.79,5.42)-- (0.02,4.78);
\draw [color=ffqqqq] (0.02,4.78)-- (-0.26,3.82);
\draw [color=ffqqqq] (-0.26,3.82)-- (-0.19,2.82);
\draw [color=ffqqqq] (-0.19,2.82)-- (0.05,1.85);
\draw [color=ffqqqq] (0.05,1.85)-- (-0.36,0.93);
\draw [color=ffqqqq] (-0.36,0.93)-- (0,0);
\draw [color=ffqqqq] (0.77,0.64)-- (0.41,1.57);
\draw [color=ffqqqq] (0.41,1.57)-- (0.81,2.49);
\draw [color=ffqqqq] (0.81,2.49)-- (0.58,3.46);
\draw [color=ffqqqq] (0.58,3.46)-- (0.51,4.46);
\draw [color=ffqqqq] (0.51,4.46)-- (0.79,5.42);
\draw [color=ffqqqq] (1.69,4.99)-- (1.41,4.03);
\draw [color=ffqqqq] (1.41,4.03)-- (1.48,3.03);
\draw [color=ffqqqq] (1.48,3.03)-- (1.72,2.06);
\draw [color=ffqqqq] (1.72,2.06)-- (1.31,1.14);
\draw [color=ffqqqq] (1.31,1.14)-- (1.67,0.21);
\draw [color=ffqqqq] (2.67,0.24)-- (2.31,1.17);
\draw [color=ffqqqq] (2.31,1.17)-- (2.72,2.09);
\draw [color=ffqqqq] (2.72,2.09)-- (2.48,3.06);
\draw [color=ffqqqq] (2.48,3.06)-- (2.41,4.06);
\draw [color=ffqqqq] (2.41,4.06)-- (2.69,5.02);
\draw [color=ffqqqq] (3.63,5.36)-- (3.35,4.4);
\draw [color=ffqqqq] (3.35,4.4)-- (3.42,3.4);
\draw [color=ffqqqq] (3.42,3.4)-- (3.66,2.43);
\draw [color=ffqqqq] (3.66,2.43)-- (3.25,1.51);
\draw [color=ffqqqq] (3.25,1.51)-- (3.61,0.58);
\draw [color=ffqqqq] (4.47,0.07)-- (4.11,1.01);
\draw [color=ffqqqq] (4.11,1.01)-- (4.52,1.92);
\draw [color=ffqqqq] (4.52,1.92)-- (4.29,2.89);
\draw [color=ffqqqq] (4.29,2.89)-- (4.21,3.89);
\draw [color=ffqqqq] (4.21,3.89)-- (4.5,4.85);
\draw [color=ffqqqq] (5.49,4.92)-- (5.21,3.97);
\draw [color=ffqqqq] (5.21,3.97)-- (5.28,2.97);
\draw [color=ffqqqq] (5.28,2.97)-- (5.52,2);
\draw [color=ffqqqq] (5.52,2)-- (5.11,1.08);
\draw [color=ffqqqq] (5.11,1.08)-- (5.47,0.15);
\draw [color=ffqqqq] (6.41,-0.19)-- (6.05,0.74);
\draw [color=ffqqqq] (6.05,0.74)-- (6.46,1.66);
\draw [color=ffqqqq] (6.46,1.66)-- (6.22,2.63);
\draw [color=ffqqqq] (6.22,2.63)-- (6.15,3.63);
\draw [color=ffqqqq] (6.15,3.63)-- (6.43,4.59);
\draw [color=ffqqqq] (7.05,4.06)-- (6.15,3.63);
\draw [color=ffqqqq] (6.15,3.63)-- (5.21,3.97);
\draw [color=ffqqqq] (5.21,3.97)-- (4.21,3.89);
\draw [color=ffqqqq] (4.21,3.89)-- (3.35,4.4);
\draw [color=ffqqqq] (3.35,4.4)-- (2.41,4.06);
\draw [color=ffqqqq] (2.41,4.06)-- (1.41,4.03);
\draw [color=ffqqqq] (1.41,4.03)-- (0.51,4.46);
\draw [color=ffqqqq] (0.51,4.46)-- (-0.26,3.82);
\draw [color=ffqqqq] (-0.19,2.82)-- (0.58,3.46);
\draw [color=ffqqqq] (0.58,3.46)-- (1.48,3.03);
\draw [color=ffqqqq] (1.48,3.03)-- (2.48,3.06);
\draw [color=ffqqqq] (2.48,3.06)-- (3.42,3.4);
\draw [color=ffqqqq] (3.42,3.4)-- (4.29,2.89);
\draw [color=ffqqqq] (4.29,2.89)-- (5.28,2.97);
\draw [color=ffqqqq] (5.28,2.97)-- (6.22,2.63);
\draw [color=ffqqqq] (6.22,2.63)-- (7.12,3.06);
\draw [color=ffqqqq] (7.36,2.09)-- (6.46,1.66);
\draw [color=ffqqqq] (6.46,1.66)-- (5.52,2);
\draw [color=ffqqqq] (5.52,2)-- (4.52,1.92);
\draw [color=ffqqqq] (4.52,1.92)-- (3.66,2.43);
\draw [color=ffqqqq] (3.66,2.43)-- (2.72,2.09);
\draw [color=ffqqqq] (2.72,2.09)-- (1.72,2.06);
\draw [color=ffqqqq] (1.72,2.06)-- (0.81,2.49);
\draw [color=ffqqqq] (0.81,2.49)-- (0.05,1.85);
\draw [color=ffqqqq] (-0.36,0.93)-- (0.41,1.57);
\draw [color=ffqqqq] (0.41,1.57)-- (1.31,1.14);
\draw [color=ffqqqq] (1.31,1.14)-- (2.31,1.17);
\draw [color=ffqqqq] (2.31,1.17)-- (3.25,1.51);
\draw [color=ffqqqq] (3.25,1.51)-- (4.11,1.01);
\draw [color=ffqqqq] (4.11,1.01)-- (5.11,1.08);
\draw [color=ffqqqq] (5.11,1.08)-- (6.05,0.74);
\draw [color=ffqqqq] (6.05,0.74)-- (6.95,1.18);
\begin{scriptsize}
\fill [color=black] (0,0) circle (1.5pt);
\draw[color=black] (0.5,0) node {$u_{n,0}$};
\fill [color=black] (0.77,0.64) circle (1.5pt);
\draw[color=black] (1.15,0.75) node {$u_{n,1}$};
\fill [color=black] (1.67,0.21) circle (1.5pt);
\draw[color=black] (2.1,0.48) node {$u_{n,2}$};
\fill [color=black] (2.67,0.24) circle (1.5pt);
\fill [color=black] (3.61,0.58) circle (1.5pt);
\fill [color=black] (4.47,0.07) circle (1.5pt);
\fill [color=black] (5.47,0.15) circle (1.5pt);
\fill [color=black] (6.41,-0.19) circle (1.5pt);
\fill [color=black] (7.31,0.25) circle (1.5pt);
\draw[color=black] (8.04,0.5) node {$u_{n,k}$};
\fill [color=black] (-0.36,0.93) circle (1.5pt);
\fill [color=black] (0.05,1.85) circle (1.5pt);
\fill [color=black] (-0.19,2.82) circle (1.5pt);
\draw[color=black] (0.2,2.75) node {$u_{2,0}$};
\fill [color=black] (-0.26,3.82) circle (1.5pt);
\draw[color=black] (0.15,3.7) node {$u_{1,0}$};
\fill [color=black] (0.02,4.78) circle (1.5pt);
\draw[color=black] (0,5.04) node {$u_{0,0}$};
\fill [color=black] (0.41,1.57) circle (1.5pt);
\fill [color=black] (1.31,1.14) circle (1.5pt);
\fill [color=black] (2.31,1.17) circle (1.5pt);
\fill [color=black] (3.25,1.51) circle (1.5pt);
\fill [color=black] (4.11,1.01) circle (1.5pt);
\fill [color=black] (5.11,1.08) circle (1.5pt);
\fill [color=black] (6.05,0.74) circle (1.5pt);
\fill [color=black] (6.95,1.18) circle (1.5pt);
\fill [color=black] (0.81,2.49) circle (1.5pt);
\fill [color=black] (1.72,2.06) circle (1.5pt);
\fill [color=black] (2.72,2.09) circle (1.5pt);
\fill [color=black] (3.66,2.43) circle (1.5pt);
\fill [color=black] (4.52,1.92) circle (1.5pt);
\fill [color=black] (5.52,2) circle (1.5pt);
\fill [color=black] (6.46,1.66) circle (1.5pt);
\fill [color=black] (7.36,2.09) circle (1.5pt);
\fill [color=black] (0.58,3.46) circle (1.5pt);
\draw[color=black] (1,3.5) node {$u_{2,1}$};
\fill [color=black] (1.48,3.03) circle (1.5pt);
\draw[color=black] (1.9,3.28) node {$u_{2,2}$};
\fill [color=black] (2.48,3.06) circle (1.5pt);
\fill [color=black] (3.42,3.4) circle (1.5pt);
\fill [color=black] (4.29,2.89) circle (1.5pt);
\fill [color=black] (5.28,2.97) circle (1.5pt);
\fill [color=black] (6.22,2.63) circle (1.5pt);
\fill [color=black] (7.12,3.06) circle (1.5pt);
\draw[color=black] (7.84,3.32) node {$u_{2,k}$};
\fill [color=black] (0.51,4.46) circle (1.5pt);
\draw[color=black] (1,4.5) node {$u_{1,1}$};
\fill [color=black] (1.41,4.03) circle (1.5pt);
\draw[color=black] (1.8,4.28) node {$u_{1,2}$};
\fill [color=black] (2.41,4.06) circle (1.5pt);
\fill [color=black] (3.35,4.4) circle (1.5pt);
\fill [color=black] (4.21,3.89) circle (1.5pt);
\fill [color=black] (5.21,3.97) circle (1.5pt);
\fill [color=black] (6.15,3.63) circle (1.5pt);
\fill [color=black] (7.05,4.06) circle (1.5pt);
\draw[color=black] (7.78,4.32) node {$u_{1,k}$};
\fill [color=black] (0.79,5.42) circle (1.5pt);
\draw[color=black] (1.25,5.4) node {$u_{0,1}$};
\fill [color=black] (1.69,4.99) circle (1.5pt);
\draw[color=black] (1.75,5.24) node {$u_{0,2}$};
\fill [color=black] (2.69,5.02) circle (1.5pt);
\fill [color=black] (3.63,5.36) circle (1.5pt);
\fill [color=black] (4.5,4.85) circle (1.5pt);
\fill [color=black] (5.49,4.92) circle (1.5pt);
\fill [color=black] (6.43,4.59) circle (1.5pt);
\fill [color=black] (7.33,5.02) circle (1.5pt);
\draw[color=black] (8.06,5.28) node {$u_{0,k}$};
\end{scriptsize}
\end{tikzpicture}
\captionof{figure}{}
\label{rhombusgrid6}
\end{center}

\begin{definition}
In an EBG, let a red $n\times k$ grid be a subgraph determined by vertices $\left\lbrace u_{i,j}\vert0\le i\le n,0\le j\le k\right\rbrace$ in which the set of edges is
$$\left\lbrace\left(u_{i,j},u_{i,j+1}\right)\vert0\le i\le n,0\le j\le k-1\right\rbrace\cup\left\lbrace\left(u_{i,j}u_{i+1,j}\right)\vert0\le i\le n-1,0\le j\le k\right\rbrace$$
and all of them are coloured with red as it can be seen in Figure \ref{rhombusgrid6}. An isomorphic subgraph with its edges coloured with blue is called a blue $n\times k$ grid. Call the vertices $u_{0,0}$, $u_{n,0}$, $u_{n,k}$ and $u_{0,k}$ the corners of the grid.
\end{definition}

\begin{definition}
Define a partly virtual edge-bicoloured graph (PVEBG) in the following way:

It has three types of edges: red edges, blue edges and directed green edges. The green edges are assigned to equivalence classes, an equivalence class also has a colour of its own, which can be red or blue. A bijection from the vertices of a PVEBG to the plane is a $(1,d)$-representation of the PVEBG if and only if all vertices go to distinct points, all red edges correspond to segments of length $1$, all blue edges correspond to segments of length $d$ and all green edges from the same equivalence class correspond to directed segments of the same vector.

In the figures, we will denote red edges by straight red segments, blue edges by straight blue segments and green edges by straight green segments or green circular arcs. Green edges also get numbers, where two green edges being in the same class are denoted by numbers of the colour of the respective equivalence class.
\end{definition}

\begin{definition}
Define the range $ran(G)$ of a PVEBG $G$ as the set of numbers $d$ for which it has a $(1,d)$-representation and its $W$-range, denoted by $ran_W(G)$ for some $W\in\mathbb{Z}_{>0}$ as the set of numbers $d$ for which it has a $(1,d)$-representation in which all of its connected components have diameter less than $\min\left(W,Wd\right)$, all green edges belonging to a red equivalence class have length less than $W$ and all green edges belonging to a blue equivalence class have length less than $Wd$.
\end{definition}

Now we prove a long technical claim:

\begin{proposition}\label{prop:pvebgtoebg}
Suppose that for some finite $W$, a PVEBG $G$ has $ran_W(G)=ran(G)$. Now let $G'$ be the EBG obtained from $G$ by connecting all green edges in the same red equivalence class by a red $W\times W$ grid and all blue edges in the same blue equivalence class by a blue $W\times W$ grid (and deleting the green edges). Then $ran(G')=ran(G)$.
\end{proposition}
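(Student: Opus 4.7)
The plan is to prove the two inclusions $ran(G)\subseteq ran(G')$ and $ran(G')\subseteq ran(G)$ separately.

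For $ran(G)\subseteq ran(G')$, take any $d\in ran(G)=ran_W(G)$ and fix a $(1,d)$-representation of $G$ witnessing the $W$-boundedness. For each equivalence class $C$ of green edges, all its members share a common vector $\vec{v}_C$, whose length is strictly below $W$ in the red case and below $Wd$ in the blue case. I realise the inserted $W\times W$ grid as a parallelogram tiled by unit (resp.\ $d$-length) rhombi whose two generating vectors are chosen so that the displacement between the grid vertices identified with any matched pair of former green-edge endpoints equals $\vec{v}_C$; such a choice exists precisely because $|\vec{v}_C|$ is below the relevant threshold. A small generic perturbation of these two generators ensures that all $(W+1)^2$ new vertices are distinct from each other and from the previously placed vertices of $G$, and grids of different equivalence classes can be pushed into disjoint regions of the plane by translating them, exactly as in the proof of Lemma \ref{lem:union}.

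For $ran(G')\subseteq ran(G)$, take a $(1,d)$-representation of $G'$ and restrict it to the vertex set of $G$; the only thing left to verify is that each former equivalence class still has all of its members sharing a single vector. The essential claim is that any $(1,d)$-representation of the inserted $W\times W$ rhombus grid, with all $(W+1)^2$ grid vertices distinct (as required by the definition of a $(1,d)$-representation), must place the designated pairs of attachment vertices at the same displacement. The argument dissects the discrete flexibility of the grid: each small unit-sided rhombus admits a parallelogram configuration and, generically, a reflected non-parallelogram configuration, and one shows by induction across the grid that introducing any non-parallelogram cell propagates a shift which, in a grid of side $W$, cannot be absorbed without forcing two grid vertices to coincide. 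Consequently the whole grid is locked into a global parallelogram shape, and the displacement between any two attachment vertices is a fixed integer combination of the two generating unit vectors, independent of the particular green edge in the class.

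The main obstacle is the rigidity step in the reverse direction. Unit-sided rhombi are locally flexible --- each of the $W^2$ small rhombi carries an independent binary ``flip'' degree of freedom --- so the argument must rely on the global planarity of the embedding together with the distinctness requirement. Making this precise requires a careful case analysis of how local flips propagate and inevitably produce vertex collisions once the grid has side $W$; this is exactly where the hypothesis $ran_W(G)=ran(G)$ and the specific grid size interact to deliver the conclusion, and it is why the statement cannot be proved without a boundedness assumption.
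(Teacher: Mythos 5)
Your high-level plan (two inclusions, with the grid forcing equal displacements in one direction and being drawable in the other) matches the paper's, but both halves contain genuine errors. In the reverse direction your ``essential claim'' is the right one, but your argument for it rests on a false premise: a $4$-cycle $ABCD$ with $\left\lvert AB\right\rvert=\left\lvert BC\right\rvert=\left\lvert CD\right\rvert=\left\lvert DA\right\rvert$ has \emph{no} non-parallelogram configuration with four distinct vertices. Indeed $B$ and $D$ both lie on the intersection of the two circles of radius $\left\lvert AB\right\rvert$ centred at $A$ and at $C$; if $A\neq C$ this intersection has at most two points, so either $B=D$ (forbidden by the definition of a $(1,d)$-representation) or $B$ and $D$ are reflections of each other in the line $AC$, which is exactly the rhombus case. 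Hence there is no ``flip'' degree of freedom, nothing to propagate, and the corner identity $\vec{u_{0,0}u_{0,k}}=\vec{u_{n,0}u_{n,k}}$ follows at once by summing $\vec{u_{i,j}u_{i+1,j}}=\vec{u_{i,j+1}u_{i+1,j+1}}$ over the cells (this is the paper's Lemma \ref{lem:grid0}, a three-line argument). Your proposed induction on propagating flips that ``cannot be absorbed once the grid has side $W$'' is not a workable route, and it misattributes the role of $W$: the hypothesis $ran_W(G)=ran(G)$ and the grid size matter only for the \emph{existence} direction, not for rigidity. Note also that the grid is not ``locked into a global parallelogram shape'': the vectors $\vec{u_{0,j}u_{0,j+1}}$ may vary with $j$ (the grid can fold); only the displacement identity between corresponding corners is forced, and that is all one needs.

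In the forward direction your construction cannot work as stated. The two endpoints of a green edge become the corners $u_{0,0}$ and $u_{W,0}$, joined inside the grid by a path of $W$ unit (resp.\ length-$d$) edges, while their prescribed displacement $\vec{v}_C$ has length \emph{strictly less} than $W$ (resp.\ $Wd$). A ``parallelogram tiled by rhombi with two generating vectors'' would force $\vec{u_{0,0}u_{W,0}}=W e_1$ with $\left\lvert e_1\right\rvert=1$, i.e.\ $\left\lvert\vec{v}_C\right\rvert=W$ exactly, which is impossible; the boundary paths must zigzag, and one must prove that such folded paths exist and that the interior they generate avoids collisions. Likewise you cannot ``push grids of different equivalence classes into disjoint regions by translating them'': all four corners of each grid are already-placed vertices of $G$, so the grids are pinned. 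What can be translated are the connected components of $G$ relative to one another, and this is needed for a different purpose, namely to make the other corner distance $\left\lvert\vec{u_{0,0}u_{0,W}}\right\rvert$ smaller than $W$ when the two green edges lie in different components. The paper resolves both issues by producing infinitely many folded realizations of each boundary path with prescribed endpoints (Lemma \ref{lem:path}) and discarding the finitely many that create coincidences (Lemma \ref{lem:grid}); an argument of this kind is unavoidable and is missing from your proposal.
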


\begin{proof}

\begin{lemma}\label{lem:grid0}
If an EBG contains a red or blue $n\times k$ grid, then for any $(1,d)$-representation of the graph, $\vec{u_{0,0}u_{0,k}}=\vec{u_{n,0}u_{n,k}}$ holds, using the notations in Figure \ref{rhombusgrid6}.\hfill\hyperlink{pf:grid0}{Proof}
\end{lemma}

\begin{proof}
First, for any $4$-cycle $u_{i,j}u_{i+1,j}u_{i+1,j+1},u_{i,j+1}$ in the grid, the four vertices form a rhombus, since the only other quadrilaterals with four sides of equal length have at least two coincident vertices, which is not possible in a $(1,d)$-representation per definition.

Now using the notations of Figure \ref{rhombusgrid6}, we get that for any $1\le i\le n-1$, $\vec{u_{i,0}u_{i+1,0}}=\vec{u_{i,1}u_{i+1,1}}=...=\vec{u_{i,k}u_{i+1,k}}$. And by summing up the equalitites we got above, we get $\vec{u_{0,0}u_{n,0}}=\sum\limits_{i=0}^{n-1}{\vec{u_{i,0}u_{i+1,0}}}=\sum\limits_{i=0}^{n-1}{\vec{u_{i,k}u_{i+1,k}}}=\vec{u_{0,k}u_{n,k}}$.
\end{proof}

\begin{lemma}\label{lem:grid1}
If for the $(1,d)$-representation of any red or blue $n\times k$ grid $\left(n,k\in\mathbb{N}\right)$, the vertices from $\left\lbrace u_{i,0}\vert0\le i\le n\right\rbrace\cup\left\lbrace u_{0,j}\vert0\le j\le k\right\rbrace$ are given, then the only possible representation is when all $u_{i,j}=u_{i,0}+\vec{u_{0,0}u_{0,j}}$, and it is a $\left(1,d\right)$-representation iff

1) vectors $\vec{u_{i,0}u_{i+1,0}}$ and $\vec{u_{0,j}u_{0,j+1}}$ all have length $1$ if the grid is red and they all have length $d$ if the grid is blue

2) no two points of the set $\left\lbrace u_{i,0}+\vec{u_{0,0}u_{0,j}}\vert0\le i\le n,0\le j\le k\right\rbrace$ coincide.
\end{lemma}

\begin{proof}
If we apply Lemma \ref{lem:grid0} to the subgrid induced by the vertices from

$\left\lbrace u_{l,m}\vert0\le l\le i,0\le m\le j\right\rbrace$, we get $\vec{u_{0,0}u_{0,j}}=\vec{u_{i,0}u_{i,j}}$, which implies that the only possible representation is the one defined above.

Also, in case of a red grid, condition 1 is trivially necessary for the edges to have length 1. And it is satisfactory too, since if it applies, then all $\left\lvert u_{i,j+1}-u_{i,j}\right\rvert=$

$\left\lvert\left(u_{i,0}+\vec{u_{0,0}u_{0,j+1}}\right)-\left(u_{i,0}+\vec{u_{0,0}u_{0,j}}\right)\right\rvert=\left\lvert u_{0,j+1}-u_{0,j}\right\rvert=1$ and all $\left\lvert u_{i+1,j}-u_{i,j}\right\rvert=\left\lvert\left(u_{i+1,0}+\vec{u_{0,0}u_{0,j}}\right)-\left(u_{i,0}+\vec{u_{0,0}u_{0,j}}\right)\right\rvert=\left\lvert u_{i+1,0}-u_{i,0}\right\rvert=1$. For blue grids the proof goes the same way.

From here, it follows from the definition of $(1,d)$-graphs that with the locations defined above, we get a $(1,d)$-representation exactly if conditions 1 and 2 apply.
\end{proof}

\begin{lemma}\label{lem:path}
Suppose that we have a red $n$-path $\pi$ ($n\ge3$) with vertices $u_0$, $u_1$, ... $u_n$ with the location of $u_0$ and $u_n$ fixed so that $0<\left\lvert u_0u_n\right\rvert<n$.

Then there are infinitely many $(1,d)$-representations of $\pi$ called $u_0^{(i)}$, $u_1^{(i)}$, ..., $u_n^{(i)}$ ($i\in J$ for some index set $J$) that satisfy three properties:

1) $u_i^{(j)}\neq u_i^{(k)}$ for $i\in\left\lbrace0,1,...,n\right\rbrace,j,k\in J$

2) $\left\lvert u_0^{(j)}u_i^{(j)}\right\rvert=i$ for $0\le i\le\left\lfloor\frac{n}{2}\right\rfloor,j\in J$

3) $\left\lvert u_n^{(j)}u_i^{(j)}\right\rvert=n-i$ for $\left\lfloor\frac{n}{2}\right\rfloor+1\le i\le n, j\in J$.
\end{lemma}

\begin{proof}
We will construct such representations in the following way:

Denote the circle around $u_0$ with radius $\left\lfloor\frac{n}{2}\right\rfloor$ by $\gamma_a$ and the circle around $u_n$ with radius $\left\lceil\frac{n}{2}\right\rceil-1$ by $\gamma_b$. First, we will search for a pair of points $P_a\in\gamma_a$ and $P_b\in\gamma_b$ for which $\left\lvert P_aP_b\right\rvert<1$

We have to consider three possibilities:

i) $\gamma_a$ and $\gamma_b$ are entirely in the exterior of each other.

In this case, their radii have a sum of $n-1$ and the distance of their centers is less than $n$, so there exist two appropriate points $P_a$ and $P_b$

ii) $\gamma_b$ is in the interior of $\gamma_a$.

Then $r(\gamma_b)=r(\gamma_a)-1$ and $u_0\neq u_n$, thus, $\gamma_b$ does not fit into the disk around $u_0$ with radius $r(\gamma_a)-1$, which means that there exists a point $P_b$ of $\gamma_b$ outside of this disk, which is closer to $\gamma_a$ than $1$, so we can take a point $P_a$ on $\gamma_a$ with $\left\lvert P_aP_b\right\rvert<1$.

iii) $\gamma_a$ and $\gamma_b$ intersect.

In this case, their intersection point can be used as both $P_a$ and $P_b$.

Other possibility cannot exist as $r(\gamma_b)\le r(\gamma_a)$, thus, $\gamma_a$ cannot be contained in the interior of $\gamma_b$ and $u_0\neq u_n$, thus $\gamma_a$ cannot coincide with $\gamma_b$.

$P_1\neq u_n$ and $P_2\neq u_0$, since $r(\gamma_a)\ge r(\gamma_b)\ge1>\left\lvert P_aP_b\right\rvert$ and for any point $P_1'\in\gamma_a\setminus u_n$ close enough to $P_a$, $\left\lvert P_a'P_b\right\rvert<1$. Also, since $\gamma_a\setminus u_n$ does not fit into an open disk of radius $1$ and it is continous, for any such $P_1'$, there exists a point $P_2'\in\gamma_b$ for which $\left\lvert P_a'P_b'\right\rvert=1$ because $\gamma_a\setminus u_n$ is continuous. So if we choose distinct $\left(P_a'\right)^{(i)}$ in $\gamma_a\cup B(P_2,1)$ for all $i\in\mathbb{Z}_{>0}$, then we can choose some $\left(P_b'\right)^{(i)}$ in $\gamma_b$, which has distance $1$ from it. And out of the infinitely many $\left(\left(P_a'\right)^{(i)},\left(P_b'\right)^{(i)}\right)$ pairs generated this way, $\left(P_a'\right)^{(i)}=\left(P_b'\right)^{(j)}$ cannot occur at all for $i\neq j$ and for a fixed $i$, $\left(P_b'\right)^{(i)}=\left(P_b'\right)^{(j)}$ can occur for at most one $j>i$, since there is at most one point of $\gamma_a\setminus \left(P_a'\right)^{(i)}$ with distance $1$ from $\left(P_b'\right)^{(i)}$, so that is the only possible $\left(P_a'\right)^{(j)}$. So we still can choose an infinite index set $J_0$ out of $\mathbb{Z}_{>0}$, for which neither $\left(P_a'\right)^{(i)}=\left(P_a'\right)^{(i)}$ nor $\left(P_b'\right)^{(i)}=\left(P_b'\right)^{(i)}$ can occur for any $i<j,i,j\in J_0$.

Now for all $j\in J_0$ define $u_i^{(j)}=u_0+\frac{i}{\left\lfloor\frac{n}{2}\right\rfloor}\cdot\vec{u_0\left(P_a'\right)^{(j)}}$ for $0\le i\le\left\lfloor\frac{n}{2}\right\rfloor$ and $u_i^{(j)}=u_n+\frac{n-i}{\left\lceil\frac{n}{2}\right\rceil-1}\cdot\vec{u_n\left(P_b'\right)^{(j)}}$ for $\left\lfloor\frac{n}{2}\right\rfloor+1\le i\le n$. Thus, $u_0^{(j)}=u_0$, $u_n^{(j)}=u_n$, $u_{\left\lfloor\frac{n}{2}\right\rfloor}^{(j)}=\left(P_a'\right)^{(j)}$ and $u_{\left\lfloor\frac{n}{2}\right\rfloor}^{(j)}=\left(P_b'\right)^{(j)}$.

For these, $\left\lvert u_{i}^{(j)}u_{i+1}^{(j)}\right\rvert=\frac{\left\lvert\vec{u_0\left(P_a'\right)^{(j)}}\right\rvert}{\left\lfloor\frac{n}{2}\right\rfloor}=1$ for $0\le i\le\left\lfloor\frac{n}{2}\right\rfloor-1$, $\left\lvert u_{i}^{(j)}u_{i+1}^{(j)}\right\rvert=\left\lvert\left(P_a'\right)^{(j)}\left(P_b'\right)^{(j)}\right\rvert=1$ for $i=\left\lfloor\frac{n}{2}\right\rfloor$ and $\left\lvert u_{i-1}^{(j)}u_i^{(j)}\right\rvert=\frac{\left\lvert\vec{u_n\left(P_b'\right)^{(j)}}\right\rvert}{\left\lceil\frac{n}{2}\right\rceil-1}=1$ for $\left\lfloor\frac{n}{2}\right\rfloor+1\le i\le n-1$. Thus, $\left\lvert u_i^{(j)}u_{i+1}^{(j)}\right\rvert=1$ holds for all $0\le i\le n-1$ and $j\in J_0$ so the $u_i^{(j)}$ ($i\in\left\lbrace0,1,...,n\right\rbrace$, $j\in J_0$) indeed form $(1,d)$-representations of $\left\lbrace u_0,u_1,u_2,...,u_n\right\rbrace$ with the possible exception of coincidences.

Also, since $\left\lvert\vec{u_0u_i^{(j)}}\right\rvert=\frac{i}{\left\lfloor\frac{n}{2}\right\rfloor}\cdot\vec{u_0\left(P_a'\right)^{(j)}}=i$ ($1\le i\le\left\lfloor\frac{n}{2}\right\rfloor$, $j\in J_0$) and $\left\lvert\vec{u_nu_i^{(j)}}\right\rvert=\frac{n-i}{\left\lfloor\frac{n}{2}\right\rfloor}\cdot\vec{u_n\left(P_b'\right)^{(j)}}=n-i$ ($\left\lfloor\frac{n}{2}\right\rfloor+1\le i\le n$, $j\in J_0$, so they also satisfy Properties 2 and 3 of the Lemma.

And $u_i^{(j)}=u_0+\frac{i}{\left\lfloor\frac{n}{2}\right\rfloor}\cdot\vec{u_0\left(P_a'\right)^{(j)}}\neq u_0+\frac{i}{\left\lfloor\frac{n}{2}\right\rfloor}\cdot\vec{u_0\left(P_a'\right)^{(k)}}=u_i^{(k)}$ for $1\le i\le\left\lfloor\frac{n}{2}\right\rfloor$, $j,k\in J_0$, and similarly, $u_i^{(j)}=u_n+\frac{n-i}{\left\lceil\frac{n}{2}\right\rceil-1}\cdot\vec{u_n\left(P_b'\right)^{(j)}}\neq u_0+\frac{n-i}{\left\lceil\frac{n}{2}\right\rceil-1}\cdot\vec{u_n\left(P_b'\right)^{(k)}}=u_i{(k)}$ for $\left\lfloor\frac{n}{2}\right\rfloor+1\le i\le n-1$, $j,k\in J_0$, so they also satisfy Property 1.

So the only question left is that for how many fixed $j$ can coincidences occur among the $u_i^{(j)}$.

For any $k\in J_0$, $u_i^{(k)}$ and $u_j^{(k)}$ cannot coincide if $i\neq j$ and $1\le i,j\le\left\lfloor\frac{n}{2}\right\rfloor$, because their distance from $u_0$ is different. Similarly, they cannot coincide if $i\neq j$ and $\left\lfloor\frac{n}{2}\right\rfloor+1\le i,j\le n$, since their distance from $u_n$ is different.  And if $1\le i\le\left\lfloor\frac{n}{2}\right\rfloor$ and $\left\lfloor\frac{n}{2}\right\rfloor+1\le j\le n-1$, $u_i^{(k)}$ and $u_j^{(k)}$ can coincide for at most two $k$, since this only can occur in the two intersection points of the circle around $u_0$ of radius $i$ and the circle around $u_n$ of radius $n-j$. Thus, in total, there are at most $2\cdot\left\lfloor\frac{n}{2}\right\rfloor\cdot\left(\left\lceil\frac{n}{2}\right\rceil-1\right)$ possible $k$ for which $u_0^{(k)}$, $u_1^{(k)}$, ..., $u_n^{(k)}$ do not form a $(1,d)$-representation of $\pi$. So if we take the set $J$ with these $k$s left out of $J_0$, it still will be an infinite set fulfilling the required properties.
\end{proof}

\begin{lemma}\label{lem:grid}
Suppose that for a red $n\times k$ grid (using the above notations) $u_{0,0}$, $u_{n,0}$, $u_{0,k}$ and $u_{n,k}$ are given so that they are distinct, $\vec{u_{0,0}u_{0,k}}=\vec{u_{n,0}u_{n,k}}$, $\left\lvert\vec{u_{0,0}u_{0,k}}\right\rvert=\left\lvert\vec{u_{n,0}u_{n,k}}\right\rvert<k$ and $\left\lvert\vec{u_{0,0}u_{n,0}}\right\rvert=\left\lvert\vec{u_{k,0}u_{n,k}}\right\rvert<n$. Also suppose that a finite set of points $\mathcal{P}\subseteq\mathbb{R}^2\setminus\left(u_{0,0}\cup u_{0,k}\cup u_{n,0}\cup u_{n,k}\right)$ is given. Then there exists a $(1,d)$-representation of the grid with $u_{0,0}$, $u_{0,k}$, $u_{n,0}$ and $u_{n,k}$ in the given locations and no vertices coincident with any of the points of $\mathcal{P}$.

The same statement applies for blue grids with the exception that instead of $\left\lvert\vec{u_{0,0}u_{0,k}}\right\rvert=\left\lvert\vec{u_{n,0}u_{n,k}}\right\rvert<k$.
\end{lemma}

\begin{proof}
Call the path $u_{0,0}u_{1,0}...u_{n,0}$ $\pi_0$ and the path $u_{0,k}u_{1,k}...u_{n,k}$ $\pi_k$.

Because of Lemma \ref{lem:path}, we can take an infinite index set $J$ and points $u_{i,0}^{(j)}$ for $1\le i\le n-1$ and $j\in J$ for which $u_{0,0}$, $u_{1,0}^{(j)}$, $u_{2,0}^{(j)}$ ... , $u_{n-1,0}^{(j)}$ and $u_{n,0}$ form a $(1,d)$-representation of $\pi_0$ satisfying the following properties:

1) $u_{i,0}^{(j)}\neq u_{i,0}^{(l)}$ for $i\in\left\lbrace 0,1,...,n\right\rbrace$, $j,l\in J$.

2) $\left\lvert u_{0,0}u_{i,0}^{(j)}\right\rvert=i$ for $0\le i\le\left\lfloor\frac{n}{2}\right\rfloor$, $j\in J$.

3) $\left\lvert u_{n,0}^{(j)}u_{i,0}^{(j)}\right\rvert=n-i$ for $\left\lfloor\frac{n}{2}\right\rfloor+1\le i\le n,j\in J$.

For any $j\in J$, call the representation generated this way $\pi_0^{(j)}$.

Now define points $u_{i,k}^{(j)}$ ($1\le i\le n-1$, $j\in J$) as $u_{i,0}^{(j)}+\vec{u_{0,0}u_{0,k}}$. They trivially have the same properties as the $u_{i,0}^{(j)}$s do. Also, call these representations $\pi_k^{(j)}$ for all $j\in J$. Because of $\pi_0^{(j)}$ and $\pi_k^{(j)}$ fulfilling properties 1)-3), the only way any two points of $\pi_0^{(j)}\cup\pi_k^{(j)}$ can be coincident is if they are from $\pi_0^{(j)}$ and $\pi_k^{(j)}$, respectively.

Now take a pair $\left(u_{i,0}^{(l)},u_{j,k}^{(l)}\right)$, where $1\le i,j\le n-1$ and $l\in J$. If $1\le i,j\le\left\lfloor\frac{n}{2}\right\rfloor$, then $u_{i,0}^{(l)}$ is on the circle around $u_{0,0}$ with radius $i$, while $u_{j,k}^{(l)}$ is on the circle around $u_{0,k}$ with radius $j$, so they only can meet in the two intersection points of these two circles (the two circles have different centers). Similarly if $1\le i\le\left\lfloor\frac{n}{2}\right\rfloor$ and $\left\lfloor\frac{n}{2}\right\rfloor+1\le j\le n-1$, then $u_{i,0}^{(l)}$ is on the circle around $u_{0,0}$ with radius $i$, while $u_{j,k}^{(l)}$ is on the circle around $u_{n,k}$ of radius $n-j$, if $\left\lfloor\frac{n}{2}\right\rfloor+1\le i\le n-1$ and $1\le j\le\left\lfloor\frac{n}{2}\right\rfloor$, then $u_{i,0}^{(l)}$ is on the circle around $u_{n,0}$ with radius $n-i$ and $u_{j,k}^{(l)}$ is on the circle around $u_{0,k}$ with radius $j$ and if $\left\lfloor\frac{n}{2}\right\rfloor+1\le i\le n-1$, then $u_{i,0}$ is on the circle around $u_{n,0}$ with radius $n-i$ and $u_{j,k}^{(l)}$ is on the circle around $u_{n,k}$ with radius $n-j$. Thus, because of property 1), they coincide for at most two $l$s regardless of the choice of $i$ and $j$.

Also, since the location of $u_{i,0}$ differs in all representations, for all $1\le i\le n-1$ and all $P\in\mathcal{P}$, $u_{i,0}=P$ can occur in at most one of the chosen representations. The same applies for the coincidence $u_{i,k}=P$ for some $1\le i\le n-1$ and some $P\in\mathcal{P}$.

Thus, there are at most $2\cdot(n-1)^2+2\cdot(n-1)\cdot\left\lvert\mathcal{P}\right\rvert$ numbers $j\in J$ for which $\left\lbrace u_{0,0}^{(j)},u_{1,0}^{(j)},...,u_{n,0}^{(j)}\right\rbrace,\left\lbrace u_{0,k}^{(j)},u_{1,k}^{(j)},...,u_{n,k}^{(j)}\right\rbrace$ and $\mathcal{P}$ are pairwise disjoint. Choose such a $j$ and fix the representation of $\left\lbrace u_{0,0},u_{1,0},...,u_{n,0}\right\rbrace$ and $\left\lbrace u_{0,k},u_{1,k},...,u_{n,k}\right\rbrace$ induced by $j$: for all $1\le i\le n-1$, let $u_{i,0}=u_{i,0}^{(j)}$ and $u_{i,k}=u_{i,k}^{(j)}$.

Now again use Lemma \ref{lem:path}, but for the $k$-path $u_{0,0}u_{0,1}...u_{0,k}$. This way we get $(1,d)$-representations of $\left\lbrace u_{0,0}u_{0,1}...u_{0,k}\right\rbrace$ called $\left\lbrace u_{0,0}^{(j)}u_{0,1}^{(j)}...u_{0,k}^{(j)}\right\rbrace$ for $j\in J'$, where $J'$ is some infinite index set fulfilling the following properties:

1) $u_{0,i}^{(j)}\neq u_{0,i}^{(l)}$ for $i\in\left\lbrace0,1,...,n\right\rbrace$, $j,l\in J'$.

2) $\left\lvert u_{0,0}u_{0,i}^{(j)}\right\rvert=i$ for $0\le i\le\left\lfloor\frac{k}{2}\right\rfloor$, $j\in J'$

3) $\left\lvert u_{0,k}u_{0,i}^{(j)}\right\rvert=i$ for $\left\lfloor\frac{n}{2}\right\rfloor+1\le i\le n$, $j\in J'$.

Now define $u_{i,l}^{(j)}$ as $u_{i,0}+\vec{u_{0,0}u_{0,l}^{(j)}}$ for all $1\le i\le n$, $1\le l\le k-1$ and $j\in J'$. These give a $(1,d)$-representation of the grid apart from the possible coincidences: for any $1\le i\le n-1$ and $1\le l\le k$, $\left\lvert u_{i+1,l}^{(j)}-u_{i,l}^{(j)}\right\rvert=\left\lvert\left(u_{i+1,0}+\vec{u_{0,0}u_{0,l}^{(j)}}\right)-\left(u_{i,0}+\vec{u_{0,0}u_{0,l}^{(j)}}\right)\right\rvert=\left\lvert u_{i+1,0}-u_{i,0}\right\rvert=1$ and for any $1\le i\le n$ and $1\le l\le k-1$,
$$\left\lvert u_{i,l+1}^{(j)}-u_{i,l}^{(j)}\right\rvert=\left\lvert\left(u_{i,0}+\vec{u_{0,0}u_{0,l+1}^{(j)}}\right)-\left(u_{i,0}+\vec{u_{0,0}u_{0,l}^{(j)}}\right)\right\rvert=\left\lvert u_{0,l+1}^{(j)}-u_{0,l}^{(j)}\right\rvert=1.$$
Suppose that for some $j\in J'$, $0\le i,m\le n$ and $0\le l,m\le k$, $u_{i,l}^{(j)}$ and $u_{m,o}^{(j)}$ coincide (but they are not the same vertex, i. e. the ordered pair $\left(i,l\right)$ differs from the ordered pair $\left(m,o\right)$). If $i=l$, then this cannot happen as $\left\lbrace u_{i,0}^{(j)},u_{i,1}^{(j)},...,u_{i,n}^{(j)}\right\rbrace$ is a $(1,d)$-representation of $\left\lbrace u_{i,0},u_{i,1},...,u_{i,n}\right\rbrace$. If $i\neq l$, then $u_{i,l}^{(j)}$ is a point of the circle of radius $l$ around $u_{i,0}$ (in case $0\le l\left\lfloor\frac{k}{2}\right\rfloor$) or the circle of radius $k-l$ around $u_{i,k}$ (in case $\left\lfloor\frac{k}{2}\right\rfloor+1\le l\le k$) and similarly, $u_{m,o}$ is a point of the circle of radius $o$ around $u_{m,0}$ (in case $0\le o\left\lfloor\frac{k}{2}\right\rfloor$) or the circle of radius $k-o$ around $u_{m,k}$ (in case $\left\lfloor\frac{k}{2}\right\rfloor+1\le o\le k$). Thus, since they are points of some fixed circles with different centers, they can coincide for at most two $j\in J'$. So in total, this means that such a coincidence can occur in at most $2\cdot\left(n+1\right)\cdot k\cdot n\cdot k$ elements of $J'$. And since $J$ is infinite, this means that we can choose a $j\in J'$ for which $u_{0,0},u_{1,0},...,u_{n,0}$, $u_{0,k},u_{1,k},...,u_{n,k}$ and $u_{i,l}^{(j)}$ ($0\le i\le n$, $1\le l\le k-1$) give a $(1,d)$-representation of the grid.
\end{proof}

\begin{definition}
Take two pairs of vertices in an EBG $G$ and let connecting two ordered pairs of vertices ($(a,b)$ and $(c,d)$) with a red or blue $n\times k$ grid mean that we insert a red or blue $n\times k$ grid on these four vertices so that $a$ corresponds to $u_{0,0}$, $b$ correspond to $u_{n,0}$, $c$ correspond to $u_{0,k}$ and $d$ correspond to $u_{n,k}$.
\end{definition}

Define the radius $r(S)$ of a finite set $S$ of points as the minimal radius of any disk covering $S$ and the center $O(S)$ as the center of this disk.

\begin{lemma}\label{prop:gridconnection}
Take finite EBGs $G_1$, $G_2$, ..., $G_m$ for some $m\in\mathbb{Z}^+$. Define a graph $G$ in the following way:

Take vertices $t_i,u_i,v_i,w_i$ ($1\le i\le l$) for some $l\in\mathbb{Z}^+$ so that for all $(1\le i\le l)$ $t_i$ and $u_i$ are in the same $G_j$ and $v_i$ and $w_i$ are in the same $G_o$ ($j=o$ is allowed). Then for all $1\le i\le l$, draw a red $n_i\times k_i$ grid $R_i$ for some $n_i,k_i\in\mathbb{Z}^+ (1\le i\le l)$ with corners $t_i$, $u_i$, $w_i$ and $v_i$ in this order.

Now suppose that for some $d$, there exist $(1,d)$-representations of $G_1$, $G_2$, ... $G_m$ fulfilling the following conditions:

1) For all $1\le i\le l$, $\vec{t_iu_i}=\vec{v_iw_i}$.

2) For all $1\le i\le l$, $\left(\left\lvert\vec{t_iu_i}\right\rvert=\right)\left\lvert\vec{v_iw_i}\right\rvert<n_i$.

3) For all $1\le i\le l$, if $t_i$ and $u_i$ are vertices of $G_j$, while $v_i$ and $w_i$ are vertices of $G_o$, then $k_i> r\left(V\left(G_j\right)\right)+r\left(V\left(G_o\right)\right)$.

Then there exists a $(1,d)$-representation of $G'$.
\end{lemma}

\begin{proof}
We will draw such a representation.

First, draw the $(1,d)$-representation of $G_1$ mentioned in the assumption of the lemma.

Now define $\delta$ as the minimum of $k_i-r\left(V\left(G_j\right)\right)+r\left(V\left(G_o\right)\right)$ for $\left(1\le i\le l\right)$ for which $t_i,u_i\in V(G_j)$ and $v_i,w_i\in V(G_o)$. From condition 3), this value is positive.

Now place $O_i$ ($2\le i\le m$) one after another so that they are all inside the open disk of radius $\frac{\delta}{2}$ around $O_1$ and if we translate $G_i$ along with $O_i$, no vertices of $G_i$ coincide with those of any previously placed $G_j$.

Now because of the assumptions, we can apply Lemma \ref{lem:grid} to draw the $R_j$ ($1\le j\le l$) one after another: the conditions for its endpoints are satisfied, and other then finding a $(1,d)$-representation, we only have to take care of $V\left(R_j\right)$ being disjoint from the finitely many vertices drawn previously.
\end{proof}

Proposition \ref{prop:pvebgtoebg} is a trivial consequence of Proposition \ref{prop:gridconnection}.
\end{proof}

From now on, connecting two green edges by a red grid will mean that they are in the same red equivalence class, while connecting two green edges by a blue grid will mean that they are in the same blue equivalence class, but these grids only will be virtual in the case of a PVEBG.

\vskip.5cm

\hypertarget{pf:main}{}\subsection{Proof of Proposition \ref{prop:main}}

First, we suppose that $p$ has at least one positive coefficient, otherwise $S_0\left(p,0,+\infty\right)=\emptyset$ because of the negative leading coefficient, thus a red $K_4$ is suitable for $G_0\left(p\right)$.

Now define the graph $A_1$ in the following way:

\begin{center}
\begin{minipage}{.32\textwidth}
	\centering
	\usetikzlibrary{arrows}
\definecolor{qqzzqq}{rgb}{0,0.6,0}
\definecolor{qqqqff}{rgb}{0,0,1}
\definecolor{ffqqqq}{rgb}{1,0,0}
\begin{tikzpicture}[line cap=round,line join=round]
\clip(-0.36,-0.48) rectangle (3.22,5.86);
\draw [color=ffqqqq] (0,0)-- (0.96,0.19);
\draw [color=ffqqqq] (0.96,0.19)-- (1.77,0.73);
\draw [color=qqqqff] (0,5)-- (0,0);
\draw [shift={(0.39,0.54)},color=qqzzqq,-latex]  plot[domain=4.09:5.73,variable=\t]({1*0.66*cos(\t r)+0*0.66*sin(\t r)},{0*0.66*cos(\t r)+1*0.66*sin(\t r)});
\draw [shift={(1.11,0.84)},color=qqzzqq,-latex]  plot[domain=4.48:6.12,variable=\t]({1*0.66*cos(\t r)+0*0.66*sin(\t r)},{0*0.66*cos(\t r)+1*0.66*sin(\t r)});
\draw [shift={(1.66,1.39)},color=qqzzqq,-latex]  plot[domain=-1.41:0.23,variable=\t]({1*0.66*cos(\t r)+0*0.66*sin(\t r)},{0*0.66*cos(\t r)+1*0.66*sin(\t r)});
\draw [shift={(1.96,2.11)},color=qqzzqq,-latex]  plot[domain=-1.02:0.63,variable=\t]({1*0.66*cos(\t r)+0*0.66*sin(\t r)},{0*0.66*cos(\t r)+1*0.66*sin(\t r)});
\draw [shift={(1.96,2.89)},color=qqzzqq,-latex]  plot[domain=-0.63:1.02,variable=\t]({1*0.66*cos(\t r)+0*0.66*sin(\t r)},{0*0.66*cos(\t r)+1*0.66*sin(\t r)});
\draw [shift={(1.66,3.61)},color=qqzzqq,-latex]  plot[domain=-0.23:1.41,variable=\t]({1*0.66*cos(\t r)+0*0.66*sin(\t r)},{0*0.66*cos(\t r)+1*0.66*sin(\t r)});
\draw [shift={(1.11,4.16)},dash pattern=on 2pt off 2pt,color=qqzzqq]  plot[domain=0.16:1.81,variable=\t]({1*0.66*cos(\t r)+0*0.66*sin(\t r)},{0*0.66*cos(\t r)+1*0.66*sin(\t r)});
\draw [shift={(0.39,4.46)},color=qqzzqq,-latex]  plot[domain=0.55:2.2,variable=\t]({1*0.66*cos(\t r)+0*0.66*sin(\t r)},{0*0.66*cos(\t r)+1*0.66*sin(\t r)});
\begin{scriptsize}
\fill [color=black] (0,0) circle (1.5pt);
\draw[color=black] (0.27,0.2) node {$a_{1,0}$};
\fill [color=black] (0,5) circle (1.5pt);
\draw[color=black] (0.23,5.2) node {$a_{1,2N}$};
\fill [color=black] (0.96,0.19) circle (1.5pt);
\draw[color=black] (0.9,0.4) node {$a_{1,1}$};
\fill [color=black] (1.77,0.73) circle (1.5pt);
\draw[color=black] (1.75,0.87) node {$a_{1,2}$};
\fill [color=black] (2.31,1.54) circle (1.5pt);
\draw[color=black] (2.15,1.68) node {$a_{1,3}$};
\fill [color=black] (2.5,2.5) circle (1.5pt);
\draw[color=black] (2.15,2.64) node {$a_{1,4}$};
\fill [color=black] (2.31,3.46) circle (1.5pt);
\draw[color=black] (1.9,3.5) node {$a_{1,5}$};
\fill [color=black] (1.77,4.27) circle (1.5pt);
\draw[color=black] (1.5,4.1) node {$a_{1,6}$};
\fill [color=black] (0.96,4.81) circle (1.5pt);
\draw[color=black] (1.4,4.95) node {$a_{1,2N-1}$};
\draw[color=red] (0.56,0) node {0};
\draw[color=red] (1.5,0.4) node {1};
\draw[color=red] (2.4,1.17) node {0};
\draw[color=red] (2.71,2.12) node {1};
\draw[color=red] (2.71,3.16) node {0};
\draw[color=red] (2.32,4.13) node {1};
\draw[color=red] (0.62,5.26) node {1};
\end{scriptsize}
\end{tikzpicture}
	
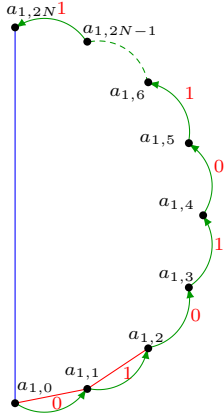
\captionof{figure}{Graph $A_1$. The dashed edge only represents that typically there there is more than one edge between $a_{1,6}$ and $a_{1,2N-1}$}
	\label{rrrbcycle6}
\end{minipage}
\begin{minipage}{.32\textwidth}
	\centering
	\usetikzlibrary{arrows}
\definecolor{qqzzqq}{rgb}{0,0.6,0}
\definecolor{ffqqqq}{rgb}{1,0,0}
\definecolor{uququq}{rgb}{0.25,0.25,0.25}
\definecolor{qqqqff}{rgb}{0,0,1}
\begin{tikzpicture}[line cap=round,line join=round]
\clip(-0.5,-0.4) rectangle (1.75,5.86);
\draw [color=qqqqff] (0,5)-- (0,0);
\draw [color=ffqqqq] (0,0)-- (0.75,0.63);
\draw [color=ffqqqq] (0.75,0.63)-- (0,1.25);
\draw [shift={(0.08,0.66)},color=qqzzqq,-Latex]  plot[domain=4.58:6.23,variable=\t]({1*0.66*cos(\t r)+0*0.66*sin(\t r)},{0*0.66*cos(\t r)+1*0.66*sin(\t r)});
\draw [shift={(0.08,1.91)},color=qqzzqq,-Latex]  plot[domain=4.58:6.23,variable=\t]({1*0.66*cos(\t r)+0*0.66*sin(\t r)},{0*0.66*cos(\t r)+1*0.66*sin(\t r)});
\draw [shift={(0.08,3.16)},color=qqzzqq,-Latex]  plot[domain=4.58:6.23,variable=\t]({1*0.66*cos(\t r)+0*0.66*sin(\t r)},{0*0.66*cos(\t r)+1*0.66*sin(\t r)});
\draw [shift={(0.08,4.41)},dash pattern=on 2pt off 2pt,color=qqzzqq]  plot[domain=4.58:6.23,variable=\t]({1*0.66*cos(\t r)+0*0.66*sin(\t r)},{0*0.66*cos(\t r)+1*0.66*sin(\t r)});
\draw [shift={(0.08,0.59)},color=qqzzqq,-Latex]  plot[domain=0.05:1.7,variable=\t]({1*0.66*cos(\t r)+0*0.66*sin(\t r)},{0*0.66*cos(\t r)+1*0.66*sin(\t r)});
\draw [shift={(0.08,1.84)},color=qqzzqq,-Latex]  plot[domain=0.05:1.7,variable=\t]({1*0.66*cos(\t r)+0*0.66*sin(\t r)},{0*0.66*cos(\t r)+1*0.66*sin(\t r)});
\draw [shift={(0.08,3.09)},color=qqzzqq,-Latex]  plot[domain=0.05:1.7,variable=\t]({1*0.66*cos(\t r)+0*0.66*sin(\t r)},{0*0.66*cos(\t r)+1*0.66*sin(\t r)});
\draw [shift={(0.08,4.34)},color=qqzzqq,-Latex]  plot[domain=0.05:1.7,variable=\t]({1*0.66*cos(\t r)+0*0.66*sin(\t r)},{0*0.66*cos(\t r)+1*0.66*sin(\t r)});
\begin{scriptsize}
\fill [color=black] (0,0) circle (1.5pt);
\draw[color=black] (-0.25,0.1) node {$a_{1,0}$};
\fill [color=black] (0,5) circle (1.5pt);
\draw[color=black] (-0.15,5.15) node {$a_{1,2N}$};
\fill [color=black] (0.75,0.63) circle (1.5pt);
\draw[color=black] (1.1,0.71) node {$a_{1,1}$};
\fill [color=black] (0,1.25) circle (1.5pt);
\draw[color=black] (-0.25,1.35) node {$a_{1,2}$};
\fill [color=black] (0.75,1.88) circle (1.5pt);
\draw[color=black] (1.1,1.96) node {$a_{1,3}$};
\fill [color=black] (0,2.5) circle (1.5pt);
\draw[color=black] (-0.25,2.6) node {$a_{1,4}$};
\fill [color=black] (0.75,3.13) circle (1.5pt);
\draw[color=black] (1.1,3.21) node {$a_{1,5}$};
\fill [color=black] (0,3.75) circle (1.5pt);
\draw[color=black] (-0.25,3.85) node {$a_{1,6}$};
\fill [color=black] (0.75,4.38) circle (1.5pt);
\draw[color=black] (1.25,4.46) node {$a_{1,2N-1}$};
\draw[color=red] (0.7,0.2) node {0};
\draw[color=red] (0.7,1.45) node {0};
\draw[color=red] (0.7,2.7) node {0};
\draw[color=red] (0.58,1.19) node {1};
\draw[color=red] (0.58,2.44) node {1};
\draw[color=red] (0.58,3.69) node {1};
\draw[color=red] (0.58,4.94) node {1};
\end{scriptsize}
\end{tikzpicture}
	\captionof{figure}{The only possible $(1,d)$-representation of $A_1$ up to isometry for some $d$}
	\label{rrrbcycle7}
\end{minipage}
\begin{minipage}{.32\textwidth}
	\centering
	\usetikzlibrary{arrows}
\definecolor{qqffqq}{rgb}{0,1,0}
\definecolor{qqzzqq}{rgb}{0,0.6,0}
\definecolor{ffqqqq}{rgb}{1,0,0}
\definecolor{uququq}{rgb}{0.25,0.25,0.25}
\definecolor{qqqqff}{rgb}{0,0,1}
\begin{tikzpicture}[line cap=round,line join=round]
\clip(-0.5,-0.4) rectangle (1.75,5.86);
\draw [color=qqqqff] (0,5)-- (0,0);
\draw [color=ffqqqq] (0,0)-- (0.75,0.63);
\draw [color=ffqqqq] (0.75,0.63)-- (0,1.25);
\draw [shift={(0.08,0.66)},color=qqzzqq,-Latex]  plot[domain=4.58:6.23,variable=\t]({1*0.66*cos(\t r)+0*0.66*sin(\t r)},{0*0.66*cos(\t r)+1*0.66*sin(\t r)});
\draw [shift={(0.08,1.91)},color=qqzzqq,-Latex]  plot[domain=4.58:6.23,variable=\t]({1*0.66*cos(\t r)+0*0.66*sin(\t r)},{0*0.66*cos(\t r)+1*0.66*sin(\t r)});
\draw [shift={(0.08,3.16)},color=qqzzqq,-Latex]  plot[domain=4.58:6.23,variable=\t]({1*0.66*cos(\t r)+0*0.66*sin(\t r)},{0*0.66*cos(\t r)+1*0.66*sin(\t r)});
\draw [shift={(0.08,4.41)},dash pattern=on 2pt off 2pt,color=qqzzqq]  plot[domain=4.58:6.23,variable=\t]({1*0.66*cos(\t r)+0*0.66*sin(\t r)},{0*0.66*cos(\t r)+1*0.66*sin(\t r)});
\draw [shift={(0.08,0.59)},color=qqzzqq,-Latex]  plot[domain=0.05:1.7,variable=\t]({1*0.66*cos(\t r)+0*0.66*sin(\t r)},{0*0.66*cos(\t r)+1*0.66*sin(\t r)});
\draw [shift={(0.08,1.84)},color=qqzzqq,-Latex]  plot[domain=0.05:1.7,variable=\t]({1*0.66*cos(\t r)+0*0.66*sin(\t r)},{0*0.66*cos(\t r)+1*0.66*sin(\t r)});
\draw [shift={(0.08,3.09)},color=qqzzqq,-Latex]  plot[domain=0.05:1.7,variable=\t]({1*0.66*cos(\t r)+0*0.66*sin(\t r)},{0*0.66*cos(\t r)+1*0.66*sin(\t r)});
\draw [shift={(0.08,4.34)},color=qqzzqq,-Latex]  plot[domain=0.05:1.7,variable=\t]({1*0.66*cos(\t r)+0*0.66*sin(\t r)},{0*0.66*cos(\t r)+1*0.66*sin(\t r)});
\draw [color=qqffqq,Latex-] (1.23,-0.22)-- (0.75,0.63);
\begin{scriptsize}
\fill [color=black] (0,0) circle (1.5pt);
\draw[color=black] (-0.25,0.1) node {$a_{j,0}$};
\fill [color=black] (0,5) circle (1.5pt);
\draw[color=black] (-0.15,5.15) node {$a_{j,2N}$};
\fill [color=black] (0.75,0.63) circle (1.5pt);
\draw[color=black] (1.1,0.71) node {$a_{j,1}$};
\fill [color=black] (0,1.25) circle (1.5pt);
\draw[color=black] (-0.25,1.35) node {$a_{j,2}$};
\fill [color=black] (0.75,1.88) circle (1.5pt);
\draw[color=black] (1.1,1.96) node {$a_{j,3}$};
\fill [color=black] (0,2.5) circle (1.5pt);
\draw[color=black] (-0.25,2.6) node {$a_{j,4}$};
\fill [color=black] (0.75,3.13) circle (1.5pt);
\draw[color=black] (1.1,3.21) node {$a_{j,5}$};
\fill [color=black] (0,3.75) circle (1.5pt);
\draw[color=black] (-0.25,3.85) node {$a_{j,6}$};
\fill [color=black] (0.75,4.38) circle (1.5pt);
\draw[color=black] (1.25,4.46) node {$a_{j,2N-1}$};
\draw[color=ffqqqq] (0.7,0.2) node {$j-1$};
\draw[color=ffqqqq] (1,1.45) node {$j-1$};
\draw[color=ffqqqq] (1,2.7) node {$j-1$};
\draw[color=ffqqqq] (0.58,1.22) node {$j$};
\draw[color=ffqqqq] (0.58,2.47) node {$j$};
\draw[color=ffqqqq] (0.58,3.72) node {$j$};
\draw[color=ffqqqq] (0.58,4.97) node {$j$};
\draw[color=ffqqqq] (1.35,0.22) node {$j-2$};
\fill [color=black] (1.23,-0.22) circle (1.5pt);
\draw[color=black] (1.12,0) node {$a_{j,2N+1}$};
\end{scriptsize}
\end{tikzpicture}
	
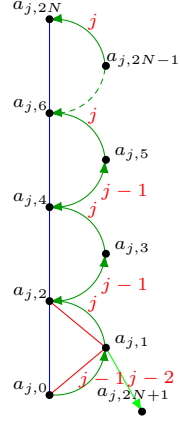
\captionof{figure}{The only possible $(1,d)$-representation of $A_j$ up to isometry for some $d$}
	\label{rrrbcycle8}
\end{minipage}
\end{center}

Let $M$ be the maximal root of $p$. Then let $N$ be an integer larger than $\frac{M}{2\sin\frac{\pi}{2\cdot deg(p)}}$.

Let the vertices of $A_1$ be $a_{1,i}$, where $i=0,1,...,2N$, connect $a_{1,0}$, $a_{1,1}$ and $a_{1,2}$ with red edges in this order and connect $(a_{1,i},a_{1,i+1})$ and $(a_{1,i+2},a_{1,i+3})$ with a red grid for $0\le i\le 2N-3$.

From this, we get that the only way $A_1$ can have a $(1,d)$-representation is similarly to the one drawn in Figure \ref{rrrbcycle7}: with $N$ isometric isosceles triangles with base length $\frac{d}{N}$ and with an apex angle we will call $\varphi$. So if we draw $A_1$ on the complex plane and we denote $\vec{a_{1,0}a_{1,1}}$ by $1$, $\vec{a_{1,1}a_{1,2}}$ by $\varepsilon$ and $\vec{a_{1,0}a_{1,2N}}$ by $z$, then $\left\lvert\varepsilon\right\rvert=1$ and $z=N\cdot\left(1+\varepsilon\right)$.

Now suppose WLOG that $\varepsilon$ has a positive imaginary part (if not, then we can draw the complex plane unconventionally, reflected to the real axis). In this case, $\varepsilon$ is uniquely defined for a fixed $d$: $\left\lvert\varepsilon\right\rvert=1$ and $Arg(\varepsilon)=\pi-\varphi$ (by Arg, we always will mean its value in $\left(-\pi,\pi\right]$).

Now define graph $A_j$ for $2\le j\le deg(p)$ in the following way:

Take an isomorphic copy of $A_1$, where the equivalent of $a_{1,i}$ is called $a_{j,i}$ (for all $i$) and call it $A_j^{-}$. Add an extra isolated vertex called $a_{j,2N+1}$ to $A_j^{-}$ and call the resulting graph $A_j$.

Similarly as above, $A_j^{-}$ is isomorphic to the drawing of $A_1$ in Figure \ref{rrrbcycle7} for $2\le j\le deg(p)$.

Now generate $A$ from the $A_j$s in the following way:

Connect $\left(a_{j,1},a_{j,2}\right)$ and $\left(a_{j+1,0},a_{j+1,1}\right)$ ($0\le j\le deg(p)-1$) via a red grid. Also, connect $\left(a_{j,0},a_{j,1}\right)$ and $\left(a_{j+1,1},a_{j+1,2N+1}\right)$ ($0\le j\le deg(p)-1$) via a red grid.

\begin{lemma}\label{lem:a}
In all $(1,d)$-representations of $A$, $\vec{a_{j,0}a_{j,1}}=\varepsilon^{j-1}$ and $\vec{a_{j,1}a_{j,2}}=\varepsilon^j$ for $1\le j\le deg(p)$.\hfill\hyperlink{pf:a}{Proof}
\end{lemma}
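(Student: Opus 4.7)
The plan is to proceed by induction on $j$. The base case $j=1$ is built into the setup: the paper explicitly fixes the complex coordinates so that $\vec{a_{1,0}a_{1,1}}=1=\varepsilon^0$ and (via the choice that $\varepsilon$ has positive imaginary part) $\vec{a_{1,1}a_{1,2}}=\varepsilon=\varepsilon^1$. Within $A_1$, the analysis preceding the lemma has already established that any $(1,d)$-representation is forced into the zigzag shape described, so this base case needs no further work.

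For the inductive step, suppose $\vec{a_{j,0}a_{j,1}}=\varepsilon^{j-1}$ and $\vec{a_{j,1}a_{j,2}}=\varepsilon^j$ for some $j$. I would first apply Lemma~\ref{lem:grid0} to the red grid that connects $(a_{j,1},a_{j,2})$ with $(a_{j+1,0},a_{j+1,1})$: this grid yields $\vec{a_{j+1,0}a_{j+1,1}}=\vec{a_{j,1}a_{j,2}}=\varepsilon^j$. So the first half of the inductive claim is immediate. For the second half, I would appeal to the internal structure of $A_{j+1}^{-}$, which is isomorphic to $A_1$ and therefore, by the same zigzag analysis, forces $|\vec{a_{j+1,1}a_{j+1,2}}|=1$ together with $|v+w|=d/N=|1+\varepsilon|$, where $v:=\vec{a_{j+1,0}a_{j+1,1}}=\varepsilon^j$ and $w:=\vec{a_{j+1,1}a_{j+1,2}}$. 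Since $|v|=|w|=1$, comparing $|v+w|^2=2+2\,\mathrm{Re}(\bar v w)$ with $|1+\varepsilon|^2=2+2\,\mathrm{Re}(\varepsilon)$ pins down $\bar v w\in\{\varepsilon,\bar\varepsilon\}$, i.e.\ $w\in\{\varepsilon^{j+1},\varepsilon^{j-1}\}$.

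The crux of the proof is to eliminate the spurious value $w=\varepsilon^{j-1}$, and this is exactly the role of the otherwise inert extra vertex $a_{j+1,2N+1}$. Applying Lemma~\ref{lem:grid0} to the second red grid, which connects $(a_{j,0},a_{j,1})$ with $(a_{j+1,1},a_{j+1,2N+1})$, gives
\[
\vec{a_{j+1,1}a_{j+1,2N+1}}=\vec{a_{j,0}a_{j,1}}=\varepsilon^{j-1}.
\]
If $w$ were $\varepsilon^{j-1}$, then $a_{j+1,2}=a_{j+1,1}+\varepsilon^{j-1}=a_{j+1,2N+1}$, i.e.\ two distinct vertices of $A$ would be mapped to the same point of the plane, contradicting the definition of a $(1,d)$-representation. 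Hence $w=\varepsilon^{j+1}$, completing the induction.

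I do not anticipate a genuine obstacle: the only delicate point is making sure the reflection ambiguity in the zigzag of $A_{j+1}^{-}$ is handled, and this is exactly what the auxiliary vertex $a_{j+1,2N+1}$ together with the second grid was designed for. One minor item to verify along the way is that in the erroneous case $w=\varepsilon^{j-1}$ the identity $a_{j+1,2}=a_{j+1,2N+1}$ really is a forbidden coincidence (rather than an allowed identification), but this is guaranteed by the fact that $A_{j+1}$ was defined with $a_{j+1,2N+1}$ as a fresh vertex disjoint from $V(A_{j+1}^{-})$.
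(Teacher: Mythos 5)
Your proof is correct and follows essentially the same route as the paper: induction on $j$, with the first grid transferring $\vec{a_{j,1}a_{j,2}}=\varepsilon^{j}$ to $\vec{a_{j+1,0}a_{j+1,1}}$, the rigidity of $A_{j+1}^{-}$ leaving only the two candidates $\varepsilon^{j+1}$ and $\varepsilon^{j-1}$, and the second grid together with the auxiliary vertex $a_{j+1,2N+1}$ ruling out $\varepsilon^{j-1}$ via a forbidden coincidence. You actually spell out the step the paper leaves implicit (the computation $\left\lvert v+w\right\rvert^2=2+2\,\mathrm{Re}(\bar v w)$ pinning down the two candidates), and your version correctly identifies which of the two candidates leads to the coincidence, where the paper's wording (``the latter'') appears to be a slip for ``the former.''
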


\begin{proof}
We will prove it by induction:

For $j=1$ it is true by the definition of $\varepsilon$.

If $j>1$ and the statement is true for $j-1$, then it is also true for $j$, since $\vec{a_{j,0}a_{j,1}}=\vec{a_{j-1,1}a_{j-1,2}}=\varepsilon^{j-1}$ and because of the isomorphy of $A_{j-1}$ and $A_j$, this only leaves the possibilities that $\vec{a_{j,1}a_{j,2}}=\varepsilon^{j-2}$ and $\vec{a_{j,1}a_{j,2}}=\varepsilon^j$. But since $\vec{a_{j,1}a_{j,N+1}}=\vec{a_{j-1,0}a_{j-1,1}}=\varepsilon^{j-2}$, the latter would mean that $a_{j,2}$ and $a_{j,N+1}$ coincide, leading us to a contradiction.
\end{proof}

\begin{lemma}\label{lem:a2}
If $d\le M$, then $A$ has a $(1,d)$-representation (in fact, the upper bound $M$ is typically not sharp).
\end{lemma}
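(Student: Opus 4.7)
The plan is to exhibit explicit complex-plane positions for the vertices of each component $A_j$ and then invoke Proposition~\ref{prop:gridconnection} to draw the connecting grids. Since $d\le M<2N\sin\frac{\pi}{2\deg(p)}$, there is a unique $\varphi\in(0,\pi/\deg(p))$ with $d=2N\sin(\varphi/2)$; set $\varepsilon=e^{i(\pi-\varphi)}$, so that $|\varepsilon|=1$ and $N|1+\varepsilon|=d$. (The degenerate case $d=0$ forces coincident zigzag vertices, so I implicitly assume $d>0$; this is consistent with the intended range of $A$.)

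For each $1\le j\le\deg(p)$ I place the vertices of $A_j$, regarded as its own connected component with the two cross-grids temporarily removed, at
\[
a_{j,2i}=C_j+i\varepsilon^{j-1}(1+\varepsilon),\qquad a_{j,2i+1}=C_j+\varepsilon^{j-1}\bigl(1+i(1+\varepsilon)\bigr),
\]
and, for $j\ge 2$, put the otherwise isolated extra vertex at $a_{j,2N+1}=a_{j,1}+\varepsilon^{j-2}$ --- precisely the position that the second cross-grid between $A_{j-1}$ and $A_j$ will later enforce. All red edges inside $A_j$ then automatically have unit length; the only real work is the non-coincidence check. Because $\varepsilon\ne\pm1$, the zigzag lies on two distinct parallel lines with nonzero step $\varepsilon^{j-1}(1+\varepsilon)$, and each putative collision reduces to one of $(i-i')(1+\varepsilon)=1$, $\varepsilon^{-1}=m$, or $m\varepsilon^2+m\varepsilon-1=0$ for integers $i,i',m$, whose only unit-modulus solutions force $\varepsilon\in\{-1,1\}$, excluded by $0<d<2N$.

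These component representations satisfy all three hypotheses of Proposition~\ref{prop:gridconnection}: for the grid linking $(a_{j,1},a_{j,2})$ with $(a_{j+1,0},a_{j+1,1})$, both prescribed endpoint-vectors equal $\varepsilon^j$, and for the grid linking $(a_{j,0},a_{j,1})$ with $(a_{j+1,1},a_{j+1,2N+1})$ both equal $\varepsilon^{j-1}$, so condition~(1) holds; any $n_i\ge 2$ fulfils condition~(2) since these vectors have length $1$; and since the diameter of each $A_j$ is visibly bounded in terms of $M$, taking every grid width $k_i$ larger than twice this crude bound secures condition~(3). Invoking the proposition then outputs a global $(1,d)$-representation of $A$.

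The main technical obstacle is the internal non-coincidence check within each $A_j$, and in particular the argument that the extra vertex $a_{j,2N+1}$ never collides with any zigzag vertex; the bound $d\le M$ enters essentially there, through the conclusion $\varepsilon\ne\pm 1$. Inter-component clashes and clashes involving grid-interior vertices are absorbed for free by the translation-and-perturbation freedom already built into the proof of Proposition~\ref{prop:gridconnection}.
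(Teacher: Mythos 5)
Your proof is correct and follows essentially the same route as the paper: explicit zigzag coordinates with $\varepsilon=e^{i(\pi-\varphi)}$, the extra vertex at $a_{j,1}+\varepsilon^{j-2}$, a non-coincidence check, and an appeal to Proposition~\ref{prop:gridconnection} (the paper replaces your algebraic collision analysis with the geometric observation that $a_{j,2N+1}$ lies on the opposite side of the line $a_{j,0}a_{j,1}$ from the rest of $A_j^{-}$). One small imprecision: $m\varepsilon^2+m\varepsilon-1=0$ does admit the unit-modulus solution $\varepsilon=e^{\pm 2\pi i/3}$ with $m=-1$, so you should argue via the non-negativity of the relevant index $m$ rather than claiming all solutions force $\varepsilon=\pm1$.
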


\begin{proof}
$A_1$ has a $(1,d)$-representation: the one in Figure \ref{rrrbcycle7} exists if $N$ is even and $N>d\ge M$, which follows from the definition of $N$.

Similarly, we can take the $(1,d)$-representations of the $A_j^-$s, but which are rotated so that $\vec{a_{j,i}a_{j,i+1}}=\varepsilon^{j-1}$ for even $i$ and $\vec{a_{j,i+1}a_{j,i+2}}=\varepsilon$ for odd $i$ as described above, and these are indeed $(1,d)$-representations of the $A_j^-$s. And in the above, we have proved that the only possible position of $a_{j,N+1}$ is $a_{j,1}+\varepsilon^{j-2}$, which is the reflection of $a_{j,2}$ to $a_{j,1}$, and is thus, on the opposite side of the line $a_{j,0}a_{j,1}$, than the vertices of $A_j^{-}$, so it cannot coincide with any of them. So if we draw the $A_j$s as in \ref{rrrbcycle8}, then from Proposition \ref{prop:gridconnection}, we get that $A$ indeed has a $(1,d)$-representation. And since all the $A_j$ have a bounded diameter (at most $1+M$), the usage of grids was allowed.
\end{proof}

\begin{lemma}\label{lem:argepsilon}
If $d\le M$, then $0=Arg\left(1\right)>Arg\left(\varepsilon^2\right)>Arg\left(\varepsilon^4\right)>...>Arg\left(\varepsilon^{deg(p)}\right)>-\pi$.
\end{lemma}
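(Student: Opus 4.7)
The plan is to extract an explicit formula for $\varepsilon$ from the essentially unique zigzag $(1,d)$-representation of $A_1$ (shown in Figure~\ref{rrrbcycles} and discussed just before Lemma~\ref{lem:a2}) and then use the carefully chosen lower bound on $N$ to keep $deg(p)\cdot(\pi-Arg(\varepsilon))$ strictly below $\pi$, which is exactly what prevents wraparound of the principal argument.

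First I would exploit the rigid geometry of $A_1$: the $N$ congruent isosceles triangles $a_{1,2i}a_{1,2i+1}a_{1,2i+2}$ have unit legs, and their bases sum vectorially to the length-$d$ segment from $a_{1,0}$ to $a_{1,2N}$, so each base has length $d/N$. Writing the common apex angle at $a_{1,2i+1}$ as $\varphi$, we obtain $2\sin(\varphi/2)=d/N$, and combining $|\varepsilon|=1$ with the sign convention $\operatorname{Im}(\varepsilon)>0$ forces $\varepsilon=e^{i(\pi-\varphi)}$; in particular $Arg(\varepsilon)=\pi-\varphi$ with $\varphi\in(0,\pi)$.

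Next I would feed in both hypotheses. Since $d\le M$ and $N$ was defined to satisfy $N>M/\bigl(2\sin(\pi/(2\cdot deg(p)))\bigr)$, we get
\[
\sin(\varphi/2) \;=\; \frac{d}{2N} \;\le\; \frac{M}{2N} \;<\; \sin\frac{\pi}{2\cdot deg(p)},
\]
and, since both sides lie in $(0,1)$, $\varphi<\pi/deg(p)$. In particular $deg(p)\cdot\varphi<\pi$.

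Finally, because $p$ is even, $deg(p)$ is even, and for each $1\le k\le deg(p)/2$ a direct computation gives $\varepsilon^{2k}=e^{2ki(\pi-\varphi)}=e^{-2ki\varphi}$. Since $0<2k\varphi\le deg(p)\cdot\varphi<\pi$, the value $-2k\varphi$ already lies in $(-\pi,0)$ and is therefore exactly the principal argument: $Arg(\varepsilon^{2k})=-2k\varphi$. Chaining these for $k=1,2,\dots,deg(p)/2$ yields
\[
0 \;>\; -2\varphi \;>\; -4\varphi \;>\; \cdots \;>\; -deg(p)\cdot\varphi \;>\; -\pi,
\]
which is the claimed chain. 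The only subtle point is ensuring that the principal argument does not wrap past $-\pi$ at the last step $k=deg(p)/2$, but the lower bound on $N$ was tailored precisely to give $deg(p)\cdot\varphi<\pi$, so this obstacle is exactly the reason for the particular constant $2\sin(\pi/(2\cdot deg(p)))$ appearing in the definition of $N$.
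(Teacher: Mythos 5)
Your proposal is correct and follows essentially the same route as the paper: both bound the apex angle $\varphi$ via the base length $d/N\le M/N<2\sin\frac{\pi}{2\cdot deg(p)}$ to get $\varphi<\frac{\pi}{deg(p)}$, and then observe that $Arg\left(\varepsilon^{2k}\right)=-2k\varphi$ stays in $\left(-\pi,0\right)$ for all $k\le deg(p)/2$. The paper phrases the angle bound as $\left\lvert\arg\left(-\varepsilon\right)\right\rvert<\frac{\pi}{deg(p)}$ and leaves the final chain as ``from which the statement follows''; you have merely written out that last step explicitly.
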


\begin{proof}
Since $a_{1,0}a_{1,1}a_{1,2}$ is an isosceles triangle with base length at most $\frac{M}{N}<2\sin{\frac{\pi}{2\cdot deg(p)}}$, $\left\lvert\frac{arg\left(-\varepsilon\right)}{2}\right\rvert<\frac{\pi}{2\cdot deg(p)}\Rightarrow\left\lvert\arg\left(-\varepsilon\right)\right\rvert<\frac{\pi}{deg(p)}$, from which the statement follows.
\end{proof}

\begin{lemma}\label{lem:qpqn}
If we define $p_p(d)$ as the polynomial formed by the positive terms of $p(d)$ and $p_n(d)$ as the polynomial formed by the negative terms of $p(d)$, then both $p_p(d)\cdot\varepsilon^{\frac{deg(p_p)}{2}}$ and $p_n(d)\cdot\varepsilon^{\frac{deg(p_n)}{2}}$ can be written as a polynomial of $\varepsilon$, call them $q_p(d)=\sum\limits_{i=0}^{deg(q_p)}{\beta_i\cdot\varepsilon^i}$ and $q_n(d)=\sum\limits_{i=0}^{deg(q_n)}{\gamma_i\cdot\varepsilon^i}$, respectively. For these, $deg(q_p)=deg(p_p)$, $deg(q_n)=deg(p_n)$ and all their coefficients are positive.
\end{lemma}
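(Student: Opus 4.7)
The plan is to convert the relation $d = |N(1+\varepsilon)|$ from the construction of $A$ into an algebraic identity expressing $d^2$ as a Laurent monomial in $\varepsilon$ times a polynomial, and then to substitute this into $p_p$ and $p_n$. Since $|\varepsilon|=1$, we have $\bar\varepsilon = \varepsilon^{-1}$, so
\[
d^2 \;=\; N^2(1+\varepsilon)(1+\bar\varepsilon) \;=\; N^2(1+\varepsilon)(1+\varepsilon^{-1}) \;=\; N^2\varepsilon^{-1}(1+\varepsilon)^2.
\]
This is the only nontrivial computation; everything else is bookkeeping.

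Next, because $p$ is even, both $p_p$ and $p_n$ are even polynomials, and $\tfrac{1}{2}\deg(p_p)$ and $\tfrac{1}{2}\deg(p_n)$ are genuine non-negative integers. Write $p_p(d) = \sum_i c_i d^{2i}$, where the sum is over $i$ such that the coefficient of $d^{2i}$ in $p$ is positive, and let $m_p = \tfrac{1}{2}\deg(p_p)$, so $c_{m_p} > 0$ and every $i$ appearing in the sum satisfies $0 \le i \le m_p$. Substituting the identity for $d^2$ and multiplying through by $\varepsilon^{m_p}$, I would obtain
\[
p_p(d)\cdot\varepsilon^{m_p} \;=\; \sum_{i} c_i N^{2i}\,\varepsilon^{m_p-i}(1+\varepsilon)^{2i}.
\]
Each exponent $m_p - i$ is non-negative, so the right-hand side is an actual polynomial in $\varepsilon$, which I set to be $q_p(\varepsilon)$. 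The same procedure works verbatim for $p_n$ (taken with positive coefficients, so that $p = p_p - p_n$), producing $q_n$.

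It remains to verify the two stated properties. For the degree: the term indexed by $i$ has degree exactly $(m_p-i) + 2i = m_p + i$, which is maximised at $i = m_p$, yielding $\deg(q_p) = 2m_p = \deg(p_p)$; the analogous claim holds for $q_n$. For the positivity of the coefficients: each summand $c_i N^{2i}\varepsilon^{m_p-i}(1+\varepsilon)^{2i}$ is a polynomial in $\varepsilon$ whose coefficients are all non-negative (positive binomial coefficients multiplied by $c_i N^{2i} > 0$), so $q_p$ has non-negative coefficients. Strict positivity of every coefficient from $\varepsilon^0$ through $\varepsilon^{2m_p}$ already follows from the single term $i = m_p$, namely $c_{m_p}N^{2m_p}(1+\varepsilon)^{2m_p}$, whose expansion has all binomial coefficients strictly positive; the remaining summands only add more non-negative contributions. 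The same argument gives the positivity for $q_n$. There is no real obstacle here beyond spotting the identity $d^2 = N^2\varepsilon^{-1}(1+\varepsilon)^2$; the rest is direct binomial algebra.
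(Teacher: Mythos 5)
Your proof is correct and follows essentially the same route as the paper: both substitute $d^{2i}=N^{2i}\varepsilon^{-i}(1+\varepsilon)^{2i}$, expand binomially, and note that the summand with $i=\deg(p_p)/2$ (which exists because the top coefficient of $p_p$, resp.\ $p_n$, is positive) already makes every coefficient of $q_p$, resp.\ $q_n$, strictly positive while pinning down the degree. The only cosmetic difference is that you obtain the key identity $d^2=N^2\varepsilon^{-1}(1+\varepsilon)^2$ from $d^2=z\bar{z}$ together with $\bar{\varepsilon}=\varepsilon^{-1}$, whereas the paper derives it from the angle relation $Arg(z)=\tfrac{1}{2}Arg(\varepsilon)$.
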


\begin{proof}
First, we write any power of $z$ with a positive integer exponent as the polynomial of $\varepsilon$:

$$z^n=\left(N\cdot(1+\varepsilon)\right)^n=\sum\limits_{i=0}^n{N^n\cdot\binom{n}{i}\cdot\varepsilon^i}$$

And since $Arg(z)=\angle{a_{1,1}a_{1,0}a_{1,N}}=\frac{\pi-\varphi}{2}=\frac{Arg\left(\varepsilon\right)}{2}$, we can write $d^{2i}\cdot\varepsilon^{\frac{deg(p_p)}{2}}$ ($i=0,1,2,...,\frac{deg(p_p)}{2}$) as a polynomial of $\varepsilon$:

$d^{2i}\cdot\varepsilon^{\frac{deg(p_p)}{2}}=\sum\limits_{j=0}^{2i}{N^{2i}\cdot\binom{2i}{j}\cdot\varepsilon^{\frac{deg(p_p)}{2}-i+j}}$

Now by summing the terms of $p_p(d)^\cdot\varepsilon^{\frac{deg(p_p)}{2}}$ up, we get

$q_p(\varepsilon)=p_p(d)\cdot\varepsilon^{\frac{deg(p_p)}{2}}=\sum\limits_{i=0,\alpha_{2i}>0}^{\frac{deg(p_p)}{2}}{\alpha_{2i}\cdot d^i\cdot\varepsilon^{\frac{deg(p_p)}{2}}}=\sum\limits_{i=0,\alpha_{2i}>0}^{\frac{deg(p_p)}{2}}{\alpha_{2i}\cdot\sum\limits_{j=0}^{2i}\cdot\binom{2i}{j}\cdot\varepsilon^{\frac{deg(p_p)}{2}-i+j}}=$

$=\sum\limits_{k=0}^{deg(p_p)}{\sum\limits_{i=\left\lvert\frac{deg(p_p)}{2}-i\right\rvert,\alpha_{2i}>0}^{\frac{deg(p_p)}{2}}{\left(\alpha_{2i}\cdot N^{2i}\cdot\binom{2i}{i+k-\frac{deg(p_p)}{2}}\right)}\cdot\varepsilon^k}$.

And since all the summands in this double sum are positive and for all $k$, at least one such summand exists as $\alpha_{deg(p_p)}$ is positive and $i=\frac{deg(p)}{2}$ occurs for all $k$.

$q_n(\varepsilon)$ is similarly defined as $\sum\limits_{k=0}^{deg(p_n)}{\sum\limits_{i=\left\lvert\frac{deg(p_n)}{2}-i\right\rvert,\alpha_{2i}>0}^{\frac{deg(p_n)}{2}}{\left(\alpha_{2i}\cdot N^{2i}\cdot\binom{2i}{i+k-\frac{deg(p_p)}{2}}\right)}\cdot\varepsilon^k}$ and the coefficients of $q_n$ are also all positive.
\end{proof}

Now our plan is as follows:

We take three copies of $A$ and with appropriate connections between these and a graph $B$ to be defined, we guarantee that some three vertices of $B$ have distances $\left\lvert q_p(\varepsilon)\right\rvert=p_p(d)$, $\left\lvert q_p(\varepsilon)\right\rvert=p_p(d)$ and $2\left\lvert q_n(\varepsilon)\right\rvert=2p_n(d)$. This graph only can have a $(1,d)$-representation if $p_p(d)\ge p_n(d)\Leftrightarrow p(d)\ge 0\Leftrightarrow d\in S_0\left(p,0,+\infty\right)$. It is simple from here to construct such a graph, but if we are not careful, some unwanted coincidences among the vertices can occur, meaning that not all of $S_0\left(p,0,+\infty\right)$ will be inside the range of $G(p)$. Hence a rather complicated construction follows.

We will have two separate cases:

1) $deg(q_p)=0$ and $deg(q_n)\ge2$.

2) $deg(q_p)\ge2$ and $deg(q_n)\ge2$.

Define graph $B$ in the following way (the vertices of $B_i$ ($1\le i\le 10$) are always called $b_{i,j}$ ($0\le j\le\left\lvert V(B_i)\right\rvert-1$)):

In case 1, the numbers of vertices of the $B_i$ are as follows:

$\left\lvert V(B_1)\right\rvert=\left\lvert V(B_4)\right\rvert=\sum\limits_{i=0}^{deg(q_p)}{\beta_i+1}$.

$\left\lvert V(B_2)\right\rvert=\left\lvert V(B_3)\right\rvert=\left\lvert V(B_5)\right\rvert=\left\lvert V(B_6)\right\rvert=0$.

$\left\lvert V(B_7)\right\rvert=2\cdot\sum\limits_{i=0}^{\frac{deg(q_n)}{2}}{\beta_{2i}+1}$.

$\left\lvert V(B_8)\right\rvert=2\cdot\sum\limits_{i=0}^{\frac{deg(q_n)}{2}}{\beta_{2i+1}+1}$.

$\left\lvert V(B_9)\right\rvert=\left\lvert V(B_{10})\right\rvert=3$.

While the green edges are located in the way drawn in Figure \ref{B6}.

\begin{figure}
	\centering
\pagestyle{empty}
\definecolor{qqzzqq}{rgb}{0,0.6,0}
\begin{tikzpicture}[line cap=round,line join=round,scale=0.5]
\clip(-7.2,-3.84) rectangle (14,6.42);
\draw [color=qqzzqq] (5,-3.22)-- (6.24,-0.32);
\draw [color=qqzzqq] (6.58,1.76)-- (6.68,4.92);
\draw [color=qqzzqq] (-0.08,-2.62)-- (-3.72,1.02);
\draw [color=qqzzqq] (-3.32,3.16)-- (1.66,5.58);
\draw [color=qqzzqq] (1.68,-1.72)-- (-1.96,1.92);
\draw [color=qqzzqq] (-1.96,1.92)-- (3.02,4.34);
\draw [color=qqzzqq] (3.44,-1.4)-- (4.68,1.5);
\draw [color=qqzzqq] (4.68,1.5)-- (4.78,4.66);
\draw [color=qqzzqq] (3.44,-1.4)-- (4.78,4.66);
\draw [color=qqzzqq] (1.68,-1.72)-- (3.02,4.34);
\begin{scriptsize}
\fill [color=black] (5,-3.22) circle (1.5pt);
\draw[color=black] (5.74,-3.5) node {\large{$b_{1,0}$}};
\fill [color=black] (6.24,-0.32) circle (1.5pt);
\draw[color=black] (7.2,0.3) node {\large{$b_{1,\vert V(B_1)\vert-1}$}};
\fill [color=black] (6.58,1.76) circle (1.5pt);
\draw[color=black] (7.32,2.02) node {\large{$b_{4,0}$}};
\fill [color=black] (6.68,4.92) circle (1.5pt);
\draw[color=black] (8.46,5.3) node {\large{$b_{4,\vert V(B_4)\vert-1}$}};
\fill [color=black] (-0.08,-2.62) circle (1.5pt);
\draw[color=black] (0.66,-2.75) node {\large{$b_{7,0}$}};
\fill [color=black] (-3.72,1.02) circle (1.5pt);
\draw[color=black] (-4,1.5) node {\large{$b_{7,\vert V(B_7)\vert-1}$}};
\fill [color=black] (-3.32,3.16) circle (1.5pt);
\draw[color=black] (-4,3.42) node {\large{$b_{8,0}$}};
\fill [color=black] (1.68,-1.72) circle (1.5pt);
\draw[color=black] (1.85,-2.1) node {\large{$b_{9,0}$}};
\fill [color=black] (-1.96,1.92) circle (1.5pt);
\draw[color=black] (-2.6,2.25) node {\large{$b_{9,1}$}};
\fill [color=black] (3.02,4.34) circle (1.5pt);
\draw[color=black] (3.5,4.7) node {\large{$b_{9,2}$}};
\fill [color=black] (1.66,5.58) circle (1.5pt);
\draw[color=black] (2,6.1) node {\large{$b_{8,\vert V(B_8)\vert-1}$}};
\fill [color=black] (3.44,-1.4) circle (1.5pt);
\draw[color=black] (4,-2) node {\large{$b_{10,0}$}};
\fill [color=black] (4.68,1.5) circle (1.5pt);
\draw[color=black] (5.5,1.76) node {\large{$b_{10,1}$}};
\fill [color=black] (4.78,4.66) circle (1.5pt);
\draw[color=black] (5.35,5) node {\large{$b_{10,2}$}};
\draw[color=red] (5.98,-1.74) node {\Large{1}};
\draw[color=red] (7.02,3.2) node {\Large{4}};
\draw[color=red] (-1.6,-0.42) node {\Large{7}};
\draw[color=red] (-0.6,3.99) node {\Large{8}};
\draw[color=red] (0.16,0.48) node {\Large{7}};
\draw[color=red] (0.74,2.75) node {\Large{8}};
\draw[color=red] (4.42,0.08) node {\Large{1}};
\draw[color=red] (5.12,3.22) node {\Large{4}};
\draw[color=red] (3.75,1.72) node {\Large{9}};
\draw[color=red] (2.74,1.4) node {\Large{9}};
\end{scriptsize}
\end{tikzpicture}
	\caption{The graph $B$ in Case 1}
	\label{B6}
\end{figure}
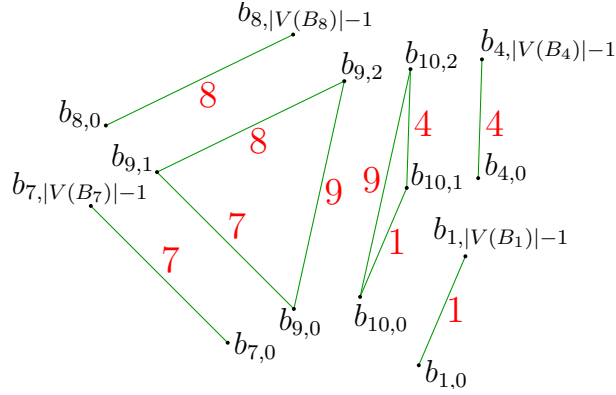

In case 2, the numbers of vertices of the $B_i$ are as follows:

$\left\lvert V(B_1)\right\rvert=\left\lvert V(B_4)\right\rvert=\sum\limits_{i=0}^{\frac{deg(q_p)}{2}}{\beta_{2i}+1}$.

$\left\lvert V(B_2)\right\rvert=\left\lvert V(B_5)\right\rvert=\sum\limits_{i=0}^{\frac{deg(q_p)}{2}}{\beta_{2i+1}+1}$.

$\left\lvert V(B_3)\right\rvert=\left\lvert V(B_6)\right\rvert=\left\lvert V(B_9)\right\rvert=\left\lvert V(B_{10})\right\rvert=3$.

$\left\lvert V(B_7)\right\rvert=2\cdot\sum\limits_{i=0}^{\frac{deg(q_n)}{2}}{\beta_{2i}+1}$.

$\left\lvert V(B_8)\right\rvert=2\cdot\sum\limits_{i=0}^{\frac{deg(q_n)}{2}}{\beta_{2i+1}+1}$.

While the green edges are located in the way drawn in Figure \ref{B7}.

\begin{figure}
	\centering
\pagestyle{empty}
\definecolor{qqzzqq}{rgb}{0,0.6,0}
\begin{tikzpicture}[line cap=round,line join=round,scale=0.5]
\clip(-5.5,-7.68) rectangle (17,11.5);
\draw [color=qqzzqq] (7.14,-6.18)-- (8.4,-3.84);
\draw [color=qqzzqq] (8.6,-2.36)-- (8.62,0.24);
\draw [color=qqzzqq] (8.46,2.02)-- (8.46,4.58);
\draw [color=qqzzqq] (8.06,5.86)-- (6.66,8.68);
\draw [color=qqzzqq] (5.16,-3.98)-- (6.42,-1.64);
\draw [color=qqzzqq] (6.42,-1.64)-- (6.44,0.96);
\draw [color=qqzzqq] (6.02,2.84)-- (6.02,5.4);
\draw [color=qqzzqq] (6.02,5.4)-- (4.62,8.22);
\draw [color=qqzzqq] (-0.08,-2.62)-- (-3.72,1.02);
\draw [color=qqzzqq] (-3.32,3.16)-- (0.2,9.84);
\draw [color=qqzzqq] (1.68,-1.72)-- (-1.96,1.92);
\draw [color=qqzzqq] (-1.96,1.92)-- (1.56,8.6);
\draw [color=qqzzqq] (3.44,-1.4)-- (4.72,3.54);
\draw [color=qqzzqq] (4.72,3.54)-- (3.32,8.92);
\draw [color=qqzzqq] (3.44,-1.4)-- (3.32,8.92);
\draw [color=qqzzqq] (5.16,-3.98)-- (6.44,0.96);
\draw [color=qqzzqq] (6.02,2.84)-- (4.62,8.22);
\draw [color=qqzzqq] (1.68,-1.72)-- (1.56,8.6);
\begin{scriptsize}
\fill [color=black] (7.14,-6.18) circle (1.5pt);
\draw[color=black] (7.88,-6.5) node {\large{$b_{1,0}$}};
\fill [color=black] (8.4,-3.84) circle (1.5pt);
\draw[color=black] (10.18,-3.4) node {\large{$b_{1,\vert V(B_1)\vert-1}$}};
\fill [color=black] (8.6,-2.36) circle (1.5pt);
\draw[color=black] (9.5,-2.3) node {\large{$b_{2,0}$}};
\fill [color=black] (8.62,0.24) circle (1.5pt);
\draw[color=black] (10.4,0.5) node {\large{$b_{2,\vert V(B_2)\vert-1}$}};
\fill [color=black] (8.46,2.02) circle (1.5pt);
\draw[color=black] (9.2,2.28) node {\large{$b_{4,0}$}};
\fill [color=black] (8.46,4.58) circle (1.5pt);
\draw[color=black] (10.24,4.84) node {\large{$b_{4,\vert V(B_4)\vert-1}$}};
\fill [color=black] (8.06,5.86) circle (1.5pt);
\draw[color=black] (8.8,6.12) node {\large{$b_{5,0}$}};
\fill [color=black] (6.66,8.68) circle (1.5pt);
\draw[color=black] (7.5,9.2) node {\large{$b_{5,\vert V(B_5)\vert-1}$}};
\fill [color=black] (-0.08,-2.62) circle (1.5pt);
\draw[color=black] (0.66,-2.36) node {\large{$b_{7,0}$}};
\fill [color=black] (-3.72,1.02) circle (1.5pt);
\draw[color=black] (-3.5,1.58) node {\large{$b_{7,\vert V(B_7)\vert-1}$}};
\fill [color=black] (-3.32,3.16) circle (1.5pt);
\draw[color=black] (-3.8,2.8) node {\large{$b_{8,0}$}};
\fill [color=black] (5.16,-3.98) circle (1.5pt);
\draw[color=black] (5.3,-4.4) node {\large{$b_{3,0}$}};
\fill [color=black] (6.42,-1.64) circle (1.5pt);
\draw[color=black] (7.16,-1.38) node {\large{$b_{3,1}$}};
\fill [color=black] (6.44,0.96) circle (1.5pt);
\draw[color=black] (7.18,1.22) node {\large{$b_{3,2}$}};
\fill [color=black] (6.02,2.84) circle (1.5pt);
\draw[color=black] (5.96,2.3) node {\large{$b_{6,0}$}};
\fill [color=black] (6.02,5.4) circle (1.5pt);
\draw[color=black] (6.76,5.66) node {\large{$b_{6,1}$}};
\fill [color=black] (4.62,8.22) circle (1.5pt);
\draw[color=black] (5,8.6) node {\large{$b_{6,2}$}};
\fill [color=black] (1.68,-1.72) circle (1.5pt);
\draw[color=black] (2.1,-2.4) node {\large{$b_{9,0}$}};
\fill [color=black] (-1.96,1.92) circle (1.5pt);
\draw[color=black] (-2.6,2.18) node {\large{$b_{9,1}$}};
\fill [color=black] (1.56,8.6) circle (1.5pt);
\draw[color=black] (1.45,9.15) node {\large{$b_{9,2}$}};
\fill [color=black] (0.2,9.84) circle (1.5pt);
\draw[color=black] (0.98,10.4) node {\large{$b_{8,\vert V(B_8)\vert-1}$}};
\fill [color=black] (3.44,-1.4) circle (1.5pt);
\draw[color=black] (3.51,-1.89) node {\large{$b_{10,0}$}};
\fill [color=black] (4.72,3.54) circle (1.5pt);
\draw[color=black] (5.01,3.2) node {\large{$b_{10,1}$}};
\fill [color=black] (3.32,8.92) circle (1.5pt);
\draw[color=black] (3.29,9.43) node {\large{$b_{10,2}$}};
\draw[color=red] (8.12,-5.02) node {\Large{1}};
\draw[color=red] (9,-1.3) node {\Large{2}};
\draw[color=red] (8.86,3.46) node {\Large{4}};
\draw[color=red] (7.72,7.56) node {\Large{5}};
\draw[color=red] (6.14,-2.82) node {\Large{1}};
\draw[color=red] (6.82,-0.2) node {\Large{2}};
\draw[color=red] (6.42,4.28) node {\Large{4}};
\draw[color=red] (5.68,7.1) node {\Large{5}};
\draw[color=red] (-1.6,-0.42) node {\Large{7}};
\draw[color=red] (-1.2,6.5) node {\Large{8}};
\draw[color=red] (0.16,0.48) node {\Large{7}};
\draw[color=red] (0.16,5.26) node {\Large{8}};
\draw[color=red] (4.46,1.14) node {\Large{3}};
\draw[color=red] (4.4,6.46) node {\Large{6}};
\draw[color=red] (3.76,3.92) node {\Large{9}};
\draw[color=red] (5.18,-1.44) node {\Large{3}};
\draw[color=red] (4.9,5.61) node {\Large{6}};
\draw[color=red] (2,3.6) node {\Large{9}};
\end{scriptsize}
\end{tikzpicture}
	\caption{The graph $B$ in Case 2}
	\label{B7}
\end{figure}
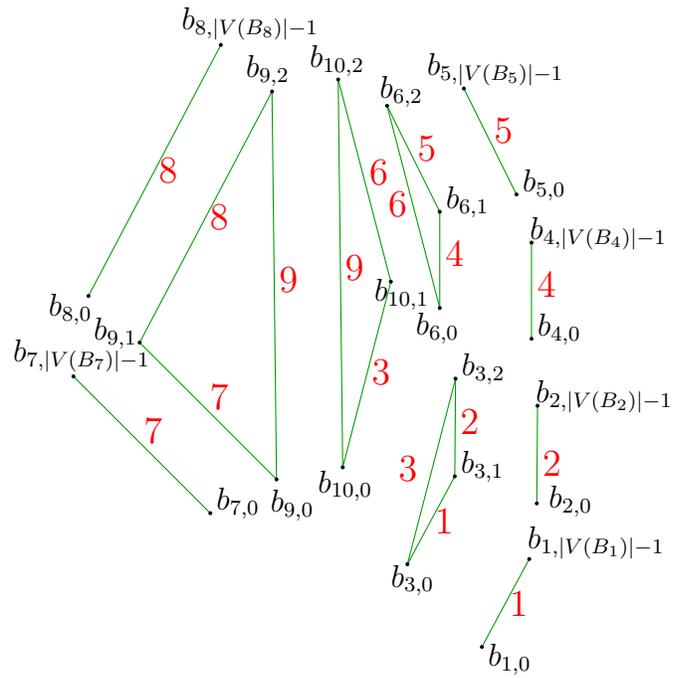

With the help of $B$ and three copies of $A$, we can define the desired $G\left(p\right)$:

Take three isomorphic copies of $A$: $A^{(1)}$, $A^{(2)}$ and $A^{(3)}$ and for any vertex $v$ of $A$, denote its respective copies in $A^{(1)}$, $A^{(2)}$ and $A^{(3)}$ by $v^{(1)}$, $v^{(2)}$ and $v^{(3)}$. Use similar notations for the complex numbers defined above: for example the vector that can be written as $1+\varepsilon+8\varepsilon^2+3\varepsilon^3$ in the coordinate system belonging to $A^{(1)}$ will be called $\left(1+\varepsilon+8\varepsilon^2+3\varepsilon^3\right)^{(1)}$.

Now for every $0\le l\le\beta_0-1$, connect the pair $\left(b_{1,l},b_{1,l+1}\right)$ and the pair $\left(a_{1,0}^{(1)},a_{1,1}^{(1)}\right)$ with a red grid and for every $1\le i\le\frac{deg(p)}{2}$ and every $\sum\limits_{j=0}^{i-1}{\beta_{2j}}\le l\le\sum\limits_{j=0}^i{\beta_{2j}}-1$, connect the pair $\left(b_{1,l},b_{1,l+1}\right)$ and the pair $\left(a_{2i,1}^{(1)},a_{2i,2}^{(1)}\right)$ with a red grid.

Similarly, if $V(B_2)\neq 0$ (so if Case 2 holds), then for all $0\le i\le\frac{deg(p)}{2}-1$ and $\sum\limits_{j=0}^{i-1}{\beta_{2j+1}}\le l\le\sum\limits_{j=0}^{i-1}{\beta_{2j+1}}$ connect the pair $\left(b_{2,l},b_{2,l+1}\right)$ and the pair $\left(a_{2i+1,1}^{(1)},a_{2i+1,2}^{(1)}\right)$ with a red grid.

Similarly, for every $0\le l\le\beta_0-1$, connect the pair $\left(b_{4,l},b_{4,l+1}\right)$ and the pair $\left(a_{1,0}^{(2)},a_{1,1}^{(2)}\right)$ with a red grid and for every $1\le i\le\frac{deg(p)}{2}$ and every $\sum\limits_{j=0}^{i-1}{\beta_{2j}}\le l\le\sum\limits_{j=0}^i{\beta_{2j}}-1$, connect the pair $\left(b_{4,l},b_{4,l+1}\right)$ and the pair $\left(a_{2i,1}^{(2)},a_{2i,2}^{(2)}\right)$ with a red grid.

Also, if $V(B_5)\neq 0$ (so if Case 2 holds), then for all $0\le i\le\frac{deg(p)}{2}-1$ and $\sum\limits_{j=0}^{i-1}{\beta_{2j+1}}\le l\sum\limits_{j=0}^{i-1}{\beta_{2j+1}}$ connect the pair $\left(b_{5,l},b_{5,l+1}\right)$ and the pair $\left(a_{2i+1,1}^{(2)},a_{2i+1,2}^{(2)}\right)$ with a red grid.

Also, for every $0\le l\le2\cdot\gamma_0-1$, connect the pair $\left(b_{7,l},b_{7,l+1}\right)$ and the pair $\left(a_{1,0}^{(3)},a_{1,1}^{(3)}\right)$ with a red grid and for every $1\le i\le\frac{deg(p)}{2}$ and every $2\cdot\sum\limits_{j=0}^{i-1}{\gamma_{2j}}\le l\le2\cdot\sum\limits_{j=0}^i{\gamma_{2j}}-1$, connect the pair $\left(b_{7,l},b_{7,l+1}\right)$ and the pair $\left(a_{2i,1}^{(3)},a_{2i,2}^{(3)}\right)$ with a red grid.

Finally, for all $0\le i\le\frac{deg(p)}{2}-1$ and $2\cdot\sum\limits_{j=0}^{i-1}{\gamma_{2j+1}}\le l\le2\cdot\sum\limits_{j=0}^{i-1}{\gamma_{2j+1}}-1$, connect the pair $\left(b_{8,l},b_{8,l+1}\right)$ and the pair $\left(a_{2i+1,1}^{(3)},a_{2i+1,2}^{(3)}\right)$ with a red grid.

Practically, these grid connections provided that the vectors $\vec{b_{1,i}b_{1,i+1}}$ are powers of $\varepsilon^{(1)}$ in an ascending order, summing up to $q_{p,e}^{(1)}$, and similarly, the vectors connecting neighbours in $B_2$, $B_4$, $B_5$, $B_7$ and $B_8$ also form paths of powers of $\varepsilon$ that sum up to $q_{p,o}^{(1)}$, $q_{p,e}^{(2)}$, $q_{p,o}^{(2)}$, $q_{n,o}^{(3)}$ and $q_{n,e}^{(3)}$, respectively.

This way, we constructed a PVEBG $\hat{G}\left(p\right)$, whose range is $S_0\left(p,0,\infty\right)$ and from the above description of its $(1,d)$-representations, it also has a number $W$, for which $Ran_W\left(\hat{G}\left(p\right)\right)=S_0\left(p,0,\infty\right)$ also applies, so by Proposition \ref{prop:pvebgtoebg}, we can create an EBG $G\left(p\right)$ for which $Ran\left(G\left(p\right)\right)=S_0\left(p,0,\infty\right)$.

\begin{lemma}\label{lem:egyikirany}
$G\left(p\right)$ does not have a $(1,d)$-representation if $d\notin S_0(p,0,+\infty)$.
\end{lemma}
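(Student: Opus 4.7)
I will prove the contrapositive: given a $(1,d)$-representation of $G(p)$ with $d>0$, I will deduce $p(d) \ge 0$. (For $d=0$ the claim is immediate, since every copy of $A$ contains a blue edge, so no $(1,0)$-representation can place all vertices at distinct points.)

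The first step is to apply Lemma \ref{lem:a} to each of the three embedded copies $A^{(1)}, A^{(2)}, A^{(3)}$. Choosing a complex coordinate on each copy separately fixes a unit-modulus number $\varepsilon^{(i)}$ with $\lvert N(1+\varepsilon^{(i)})\rvert = d$, such that every edge $\vec{a_{j,1}^{(i)}\,a_{j,2}^{(i)}}$ equals $(\varepsilon^{(i)})^{j}$ and $\vec{a_{j,0}^{(i)}\,a_{j,1}^{(i)}}$ equals $(\varepsilon^{(i)})^{j-1}$. By Lemma \ref{lem:grid0}, every grid in the construction transports each such vector identically to the paired edge of the relevant $B_k$.

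The second step is the bookkeeping: sum the edge vectors along each $B_k$. By construction, $\beta_{2i}$ consecutive edges of $B_1$ are assigned to the class with vector $(\varepsilon^{(1)})^{2i}$, and similarly for the other paths, yielding
\[
\vec{b_{1,0}\,b_{1,\lvert V(B_1)\rvert-1}} = q_{p,e}(\varepsilon^{(1)}), \qquad \vec{b_{2,0}\,b_{2,\lvert V(B_2)\rvert-1}} = q_{p,o}(\varepsilon^{(1)}),
\]
the analogous pair for $B_4, B_5$ with $\varepsilon^{(2)}$, and (with an overall factor of $2$ built into the vertex counts)
\[
\vec{b_{7,0}\,b_{7,\lvert V(B_7)\rvert-1}} = 2\,q_{n,e}(\varepsilon^{(3)}), \qquad \vec{b_{8,0}\,b_{8,\lvert V(B_8)\rvert-1}} = 2\,q_{n,o}(\varepsilon^{(3)}).
\]
The three-vertex gadgets $B_3, B_6$ (needed in Case~2) then add the even and odd halves, giving $\vec{b_{3,0}\,b_{3,2}} = q_p(\varepsilon^{(1)})$ and $\vec{b_{6,0}\,b_{6,2}} = q_p(\varepsilon^{(2)})$; in Case~1 this step is vacuous since $q_{p,o}=0$. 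Finally, $B_{10}$ and $B_9$ yield
\[
\vec{b_{10,0}\,b_{10,2}} = q_p(\varepsilon^{(1)}) + q_p(\varepsilon^{(2)}), \qquad \vec{b_{9,0}\,b_{9,2}} = 2\,q_n(\varepsilon^{(3)}).
\]

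The third step is the triangle inequality. The common equivalence class $9$ forces $\vec{b_{9,0}\,b_{9,2}} = \vec{b_{10,0}\,b_{10,2}}$, so $q_p(\varepsilon^{(1)}) + q_p(\varepsilon^{(2)}) = 2 q_n(\varepsilon^{(3)})$. By Lemma \ref{lem:qpqn} and $\lvert\varepsilon^{(i)}\rvert=1$ we have $\lvert q_p(\varepsilon^{(i)})\rvert = p_p(d)$ and $\lvert q_n(\varepsilon^{(3)})\rvert = p_n(d)$, hence
\[
2\,p_n(d) = \bigl\lvert q_p(\varepsilon^{(1)}) + q_p(\varepsilon^{(2)})\bigr\rvert \le \bigl\lvert q_p(\varepsilon^{(1)})\bigr\rvert + \bigl\lvert q_p(\varepsilon^{(2)})\bigr\rvert = 2\,p_p(d),
\]
so $p(d) = p_p(d)-p_n(d) \ge 0$, as required. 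The main obstacle is the bookkeeping of Step~2: one must verify carefully that the multiplicities and orientations of the grid connections in the construction of $\hat{G}(p)$ reproduce exactly the coefficient-by-coefficient decomposition of $q_p$ and $q_n$ into even and odd parts (so that the vectors genuinely accumulate rather than cancel), and that Cases~1 and~2 can be handled in parallel. Once those identities are established, the triangle inequality concludes the proof in one line.
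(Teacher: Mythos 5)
Your proposal is correct and follows essentially the same route as the paper: use the rigidity of the three copies of $A$ (Lemma \ref{lem:a}) to force each green class to the appropriate power of $\varepsilon^{(i)}$, sum along the paths $B_k$ to realize $q_p(\varepsilon^{(1)})$, $q_p(\varepsilon^{(2)})$ and $2q_n(\varepsilon^{(3)})$ as the sides of the triangle $b_{10,0}b_{10,1}b_{10,2}$, and conclude $2p_n(d)\le 2p_p(d)$ by the triangle inequality. Your phrasing via the vector identity $q_p(\varepsilon^{(1)})+q_p(\varepsilon^{(2)})=2q_n(\varepsilon^{(3)})$ is in fact slightly cleaner than the paper's (which contains a couple of typographical slips such as $\vec{b_{10,2}b_{10,3}}$ for $\vec{b_{10,0}b_{10,2}}$), but it is the same argument.
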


\begin{proof}
First, we prove that $G\left(p\right)$ cannot have any $(1,d)$-representation for $d\notin S_0\left(p,0,+\infty\right)$.

Suppose that for some $d$, there exists a $(1,d)$-representation.

From Proposition \ref{prop:gridconnection}, we know that if Case 1 holds for $q(\varepsilon)$, then
\begin{align*}
\left\lvert\vec{b_{10,0}b_{10,1}}\right\rvert & =\left\lvert\vec{b_{1,0}b_{1,\left\lvert V(B_1)\right\rvert-1}}\right\rvert=\left\lvert\sum\limits_{i=0}^{\left\lvert V(B_1)\right\rvert-1}{2}{\left(\vec{b_{1,i}b_{1,i+1}}\right)}\right\rvert=\left\lvert\left(\sum\limits_{i=0}^{\frac{deg(p)}{2}}{\beta_{2i}\cdot\varepsilon^{2i}}\right)^{(1)}\right\rvert= \\
& =\left\lvert\left(q_p(\varepsilon)\right)^{(1)}\right\rvert=p_p(d).
\end{align*}
If Case 2 holds for $q(\varepsilon)$, then similarly
\begin{align*}
\left\lvert\vec{b_{10,0}b_{10,1}}\right\rvert & =\left\lvert\vec{b_{3,0}b_{3,2}}\right\rvert=\left\lvert\vec{b_{3,0}b_{3,1}}+\vec{b_{3,1}b_{3,2}}\right\rvert= \\
& =\left\lvert\left(\sum\limits_{i=0}^{\frac{deg(p)}{2}}{\beta_{2i}\cdot\varepsilon^{2i}}+\sum\limits_{i=0}^{\frac{deg(p)}{2}-1}{\beta_{2i+1}\cdot\varepsilon^{2i+1}}\right)^{(1)}\right\rvert=\left\lvert q_p(\varepsilon)\right\rvert=\left\lvert\left(q_p(\varepsilon)\right)^{(1)}\right\rvert=p_p(d).
\end{align*}
Similarly, we know that if Case 1 holds for $q(\varepsilon)$, then $\left\lvert\vec{b_{10,1}b_{10,2}}\right\rvert=\left\lvert\vec{b_{4,0}b_{4,\left\lvert V(B_4)\right\rvert-1}}\right\rvert=\left\lvert\left(\sum\limits_{i=0}^{\frac{deg(p)}{2}}{\beta_{2i}\cdot\varepsilon^{2i}}\right)^{(2)}\right\rvert=\left\lvert\left(q_p(\varepsilon)\right)^{(2)}\right\rvert=p_p(d)$. If Case 2 holds for $q(\varepsilon)$, then similarly $\left\lvert\vec{b_{10,0}b_{10,1}}\right\rvert=\left\lvert\vec{b_{6,0}b_{6,2}}\right\rvert=\left\lvert\vec{b_{6,0}b_{6,1}}+\vec{b_{6,1}b_{6,2}}\right\rvert=\left\lvert\left(\sum\limits_{i=0}^{\frac{deg(p)}{2}}{\beta_{2i}\cdot\varepsilon^{2i}}+\sum\limits_{i=0}^{\frac{deg(p)}{2}-1}{\beta_{2i+1}\cdot\varepsilon^{2i+1}}\right)^{(2)}\right\rvert=\left\lvert q_p(\varepsilon)\right\rvert=\left\lvert\left(q_p(\varepsilon)\right)^{(2)}\right\rvert=p_p(d)$.

And finally, we know that
\[
\left\lvert\vec{b_{10,2}b_{10,3}}\right\rvert=\left\lvert\vec{b_{7,0}b_{7,\left\lvert V(B_7)-1\right\rvert}}+\vec{b_{8,0}b_{8,\left\lvert V(B_8)-1\right\rvert}}\right\rvert=\left\lvert\left(q_{n,o}+q_{n,e}\right)^{(3)}\right\rvert=\left\lvert\left(2\cdot q_p(\varepsilon)\right)^{(3)}\right\rvert=2p_n(d).
\]

And if $d\notin S_0(p,0,+\infty)$, then $p_p(d)-p_n(d)=p(d)<0$, thus, $\left\lvert\vec{b_{10,0}b_{10,1}}\right\rvert+\left\lvert\vec{b_{10,1}b_{10,2}}\right\rvert=p_p(d)+p_p(d)<2p_n(d)=\left\lvert\vec{b_{10,0}b_{10,2}}\right\rvert$, which proves the statement.
\end{proof}

\begin{lemma}\label{lem:masikirany}
If $d\in S_0\left(p,0+\infty\right)$, then $G\left(p\right)$ does have a $(1,d)$-representation.
\end{lemma}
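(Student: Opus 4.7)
The plan is to build an explicit $(1,d)$-representation of the PVEBG $\hat{G}(p)$ and then invoke Proposition \ref{prop:pvebgtoebg}. Since $d\in S_0(p,0,+\infty)$ we have $p(d)\ge 0$ and $d>0$; since $p$ has a negative leading coefficient and (by the reduction at the start of the proof) at least one positive coefficient, $d$ lies below the largest positive root $M$ of $p$, so Lemma \ref{lem:a2} gives each copy $A^{(i)}$ a $(1,d)$-representation. By Lemma \ref{lem:a} every such representation is rigid up to a Euclidean motion and the conjugate choice encoded in $\varepsilon^{(i)}$, and Lemma \ref{lem:qpqn} supplies $|q_p(\varepsilon^{(i)})|=p_p(d)$ and $|q_n(\varepsilon^{(i)})|=p_n(d)$.

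First I would construct the triangle $b_{10,0}b_{10,1}b_{10,2}$ with the prescribed side lengths $p_p(d)$, $p_p(d)$, $2p_n(d)$. The critical triangle inequality $2p_p(d)\ge 2p_n(d)$ is exactly the hypothesis $p(d)\ge 0$, and the other two are trivial; the three vertices are distinct because $p_p(d)>0$ for $d>0$ (using that $p$ has at least one positive coefficient), and the triangle degenerates to a segment precisely when $p(d)=0$. Next I would rotate and translate each $A^{(i)}$ so that, as plane vectors, $q_p(\varepsilon^{(1)})$ matches $\vec{b_{10,0}b_{10,1}}$, $q_p(\varepsilon^{(2)})$ matches $\vec{b_{10,1}b_{10,2}}$, and $2q_n(\varepsilon^{(3)})$ matches $\vec{b_{10,0}b_{10,2}}$; a rotation together with the sign of $\varepsilon^{(i)}$ (reflection) fixes the direction, and each $A^{(i)}$ still retains two translational degrees of freedom.

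Then I would place the internal path vertices of $B_1,B_2,B_4,B_5,B_7,B_8$ and the auxiliary triangle $b_{9,0}b_{9,1}b_{9,2}$ by stepping along the vectors forced by the green equivalence classes (prescribed powers of the relevant $\varepsilon^{(i)}$). Each such cluster carries a free translational parameter (its starting vertex), because green-edge equivalences constrain only relative displacements. Using these parameters together with the translational freedom of the three $A^{(i)}$'s, a generic choice avoids the finitely many affine ``bad loci'' on which two named vertices would coincide. Since every connected component of the resulting PVEBG representation has diameter bounded by a polynomial in $d$, the $W$ fixed in the construction of $\hat{G}(p)$ still witnesses $d\in \operatorname{Ran}_W(\hat{G}(p))$, and Proposition \ref{prop:pvebgtoebg} then yields the desired $(1,d)$-representation of $G(p)$.

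The main obstacle is the distinctness verification. Each $A^{(i)}$ packs $O(N\deg p)$ vertices into a disk of bounded radius, and the $B_i$ paths wind through the same geometry of powers of $\varepsilon^{(i)}$, so many vertices lie naturally close. The degenerate case $p(d)=0$ is particularly tight: the triangle $b_{10,0}b_{10,1}b_{10,2}$ collapses to a segment, forcing the anchor vectors $q_p(\varepsilon^{(1)})$, $q_p(\varepsilon^{(2)})$, $2q_n(\varepsilon^{(3)})$ to be parallel, and the three copies of $A$ together with the clusters of $B$ must be fitted together along a very constrained configuration. Showing that enough continuous freedom remains to simultaneously avoid every coincidence---in particular between vertices of different $A^{(i)}$'s and between each $A^{(i)}$ and the $B_j$ clusters---is the delicate part of the proof.
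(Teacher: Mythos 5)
Your overall plan is the same as the paper's: realize the forced positions of the vertices of the $B_i$, anchor the three copies of $A$ via Lemma \ref{lem:a2}, and discharge the gluing together with all \emph{inter}-component coincidences through the translational freedom built into Lemma \ref{prop:gridconnection} and Proposition \ref{prop:pvebgtoebg}. Your treatment of $B_{10}$ is also the right one: the triangle inequality $2p_p(d)\ge 2p_n(d)$ is exactly $p(d)\ge0$, and the collinear case $p(d)=0$ is harmless because a $(1,d)$-representation only requires distinct points, not non-degenerate triangles, and $p_p(d)>0$ already gives distinctness there.

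The genuine gap is the distinctness verification for vertices \emph{inside} a single cluster $B_1,B_2,B_4,B_5,B_7,B_8$. You propose to kill coincidences by a ``generic choice'' of free translational parameters, but a translation moves a whole cluster rigidly: the relative position $\vec{b_{1,i}b_{1,j}}$ is a fixed sum of consecutive step vectors, each a prescribed power $\left(\varepsilon^{2k}\right)^{(1)}$, so no amount of translating (or rotating) the cluster can separate $b_{1,i}$ from $b_{1,j}$ if that partial sum happens to vanish --- one must prove directly that it never does. This is exactly what Lemma \ref{lem:argepsilon} is for: since $d\le M$ forces $0=Arg(1)>Arg\left(\varepsilon^2\right)>\dots>Arg\left(\varepsilon^{\deg(p)}\right)>-\pi$, all step vectors of the path lie in an open half-plane, hence there is a direction $v$ with $\left\langle v,\vec{b_{1,i}b_{1,i+1}}\right\rangle>0$ for every step, the partial sums are strictly monotone in the $v$-direction, and the vertices of the cluster are automatically pairwise distinct (similarly for the triangles $B_3$, $B_6$, $B_9$, where one just checks the three pairwise distances $\lvert q_{p,e}\rvert$, $\lvert q_{p,o}\rvert$, $\lvert q_p\rvert$ are positive). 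Your closing paragraph correctly identifies distinctness as the delicate point, but the tool you reach for --- continuous freedom --- is unavailable for intra-cluster pairs, so the proposal as written does not close the argument.
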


\begin{proof}
The only possible positions of the vertices of the $B_i$ ($1\le i\le 10$) were indicated above in the definition of $G\left(p\right)$ and in the proof of Lemma \ref{lem:egyikirany}.

Now we will first prove that if $d\in S_0(p,0,+\infty)$, then these are indeed $(1,d)$-representations for all the $B_i$ ($1\le i\le 10$) without any coincidences.

The drawing for $B_1$ is a $(1,d)$-representation: we only have to prove that there exists a vector $v$ so that $\left\langle v,\vec{b_{1,i},b_{1,i+1}}\right\rangle>0$ for $0\le i\le\left\lvert V(B_1)\right\rvert-2$, since in this $\left\langle v,\vec{b_{1,i},b_{1,j}}\right\rangle>0$ holds for any $1\le i<j\le\left\lvert V(B_1)\right\rvert-2$, which means that $\vec{b_{1,i},b_{1,j}}\neq 0$. And such a vector exists: 

All vectors $\vec{b_{1,i},b_{1,i+1}}$ ($0\le i\le\left\lvert V(B_1)-2\right\rvert$ are of the form $\left(\varepsilon^{2j}\right)^{(1)}$ $\left(0\le j\le\frac{deg(p)}{2}\right)$. So now using Lemma \ref{lem:argepsilon}, if we choose any $\varepsilon'$, for which $arg(\varepsilon')\in\left(arg\left(\varepsilon^{deg(p)}\right)-\frac{\pi}{2},\frac{\pi}{2}\right)$, then $\left(\varepsilon'\right)^{(1)}$ suffices. 

Similarly, the drawings for $B_2$, $B_4$, $B_5$, $B_7$ and $B_8$ also are $(1,d)$-representations.

The drawing for $B_3$ is also a $(1,d)$-representation:

If Case 1 holds, then $\left\lvert b_{3,0}b_{3,1}\right\rvert=p_p>0$, thus the statement is proved. If Cases 2 holds, then the equality $\left(q_{p,e}\right)^{(1)}+\left(q_{p,o}\right)^{(1)}=\left(q_p\right)^{(1)}$ holds, thus we only have to check if there are any coincidences. And since $\left\lvert b_{3,0}b_{3,1}\right\rvert=\left\lvert\left(q_{p,e}\right)^{(1)}\right\rvert>0$, $\left\lvert b_{3,1}b_{3,2}\right\rvert=\left\lvert\left(q_{p,o}\right)^{(1)}\right\rvert>0$ and $\left\lvert b_{3,0}b_{3,2}\right\rvert=\left\lvert\left(q_p\right)^{(1)}\right\rvert=p_p>0$, coincidences cannot occur.

For the drawing of $B_6$, the proof goes exactly the same and for the drawing of $B_9$, it also goes the same with the exception that we use $p_n$, $q_n$, $q_{n,e}$ and $q_{n,o}$ instead of $p_p$, $q_p$, $q_{p,e}$ and $q_{p,o}$.

We also know from Lemma \ref{lem:a2} that for any $d\in S_0(p,0,+\infty)$, $A$ has a $(1,d)$-representation.

Also, from the drawings it is trivial that both $A$ and all the $B_i$ ($1\le i\le 10$) have a bounded diameter and also that for all the grids, the two pairs of points connected by them have equal vectors between them. And from here, we can apply Lemma \ref{prop:gridconnection} and this finishes the proof.
\end{proof}

Now we are also done with the proof of Proposition \ref{prop:main}.

\hypertarget{pf:s1}{}\subsection{Proof of Proposition \ref{prop:s1}}

We define $G'\left(p\right)$ by adding one extra vertex $b_{10,3}$ to $G\left(p\right)$ and connect the pairs $\left(b_{10,0},b_{10,1}\right)$ and $\left(b_{10,3},b_{10,2}\right)$ with a red grid.

From Lemma \ref{lem:subgraph}, we know that the range of $G'\left(p\right)$ is a subset of $S_0(p,0,+\infty)$. Now suppose that for any $d$, the $(1,d)$-representation of $G\left(p\right)$ does not work here. This only can happen if $b_{10,3}$ would coincide with one of the vertices of $B_{10}$, otherwise we could substitute $B_{10}$ by the graph induced by the vertices of the grid between $\left(b_{10,0},b_{10,1}\right)$ and $\left(b_{10,3},b_{10,2}\right)$ (including the four vertices in its corners) and still get a good proof (it still remains to have a bounded diameter and the equalities between the vectors that are induced by the grids are still satisfied). And since the grid induces $\vec{b_{10,3}b_{10,2}}=\vec{b_{10,0}b_{10,1}}$ and $\vec{b_{10,3}b_{10,0}}=\vec{b_{10,2}b_{10,1}}$, the only vertex $b_{10,3}$ can coincide with is $b_{10,1}$. And this happens exactly if $\vec{b_{10,0}b_{10,3}}=\vec{b_{10,0}b_{10,1}}$ meaning that $\vec{b_{10,0}b_{10,2}}=\vec{b_{10,0}b_{10,3}}+\vec{b_{10,3}b_{10,2}}=2\vec{b_{10,0}b_{10,1}}$, thus $2p_n=2p_p\Rightarrow p=0$.\qed

\begin{proposition}\label{prop:valami}
For an EBG $G$ and a positive rational number $L=\frac{L_1}{L_2}$ ($L_1$ and $L_2$ are relative primes and are in $\mathbb{Z}_{>0}$) and an arbitrary real number $U\ge L$, if $ran(G)\cap\left[L,+\infty\right)\neq\emptyset$, then one can construct a graph $G_L^U$ for which $ran\left(G_L^U\right)\supseteq\left(0,L\right]\cup ran(G)\setminus\left(U,+\infty\right)$ and $ran\left(G_L^U\right)\cap\left(L,+\infty\right)\setminus ran(G)=\emptyset$.
\end{proposition}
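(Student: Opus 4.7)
The plan is to build $G_L^U$ by attaching to $G$ a ``switch'' subgraph $H_L$ constructed from $L_1$ and $L_2$ edges, with the property that $H_L$ is flexible when $d \le L$ and rigid (or infeasible) when $d > L$. I would couple the switch to $G$ through green-edge equivalence classes in the PVEBG framework of Section \ref{subsec:pvebg}, so that in the flexible regime the original edge-constraints of $G$ are effectively released, whereas in the rigid regime they are reinstated.

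First I would construct $H_L$ as a polygonal or fan-like cycle built from $L_1$ red edges and $L_2$ blue edges, analogous to graph $A$ in the proof of Proposition \ref{prop:main}, with the closure condition arranged to be precisely $d \le L$. For $d$ in this range, $H_L$ admits a continuous family of $(1,d)$-representations parameterised by the bending of the polygon; for $d > L$ no such representation exists. Next I would couple each edge of $G$ to a designated pair of ``anchor'' vertices of $H_L$ via a green equivalence class, so that a flexible anchor configuration (available for $d \le L$) leaves the corresponding $G$-endpoints free, while a rigid anchor configuration (forced for $d > L$) pins them to the correct unit or $d$ distance. The hypothesis $ran(G) \cap [L,+\infty) \neq \emptyset$ enters here: it guarantees that the rigid anchor pattern is simultaneously realisable with a valid $(1,d^*)$-representation of $G$ for some $d^* \ge L$, so that at least part of the $d > L$ range is actually attained.

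After this, Proposition \ref{prop:pvebgtoebg} converts the PVEBG $G_L^U$ into a genuine EBG; the $W$-boundedness hypothesis is ensured since $U < +\infty$ controls all relevant diameters. If a strict truncation above $U$ is needed to match the conclusion exactly, I would take a disjoint union with a truncating graph obtained from Proposition \ref{prop:main} applied to a polynomial whose range is contained in $(0,U]$, invoking Lemma \ref{lem:union}. Verifying $ran(G_L^U) \supseteq (0,L] \cup (ran(G) \setminus (U,+\infty))$ then reduces to exhibiting representations in the flexible and rigid regimes separately, and verifying $ran(G_L^U) \cap (L,+\infty) \subseteq ran(G)$ reduces to ruling out ``hybrid'' representations in the $d > L$ regime.

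The hard part will be designing the coupling so that it tracks the $d = L$ threshold exactly: too strong a coupling would shrink the range below $L$, while too weak a coupling would admit spurious representations for $d > L$ outside $ran(G)$. I expect this to require leveraging the PVEBG green-edge mechanism, which lets us equate vectors of geometrically unrelated edges without directly imposing a red or blue distance constraint, so that the ``conditional activation'' is encoded purely through which configurations of $H_L$ survive at a given $d$.
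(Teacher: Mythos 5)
Your high-level architecture matches the paper's: a gadget built from $L_1$ and $L_2$ edges whose range is exactly $\left(0,L\right]$, a switching mechanism coupling it to $G$, and Proposition \ref{prop:pvebgtoebg} to return to an honest EBG. But the proposal omits the one construction that actually carries the proof, and as written the plan would not close.

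First, the $\left(0,L\right]$ gadget: a cycle of $L_1$ red and $L_2$ blue edges does not have range $\left(0,L\right]$, because a path of $L_2$ blue edges can fold up, so its endpoints can sit at essentially any distance between $0$ and $L_2d$ and you never force $L_2d\le L_1$. The paper's graph $D$ instead uses $L_2$ \emph{green} edges all in one equivalence class tied to a single blue edge, so that the $L_2$-edge path is forced to be a straight segment of length exactly $L_2d$; coprimality of $L_1$ and $L_2$ then handles the degenerate coincidences at $d=L$. Second, and more seriously: you assert that for $d>L$ the switch is ``rigid'' and ``pins'' every edge of $G$ to its correct length, but you give no mechanism that forces this. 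A green equivalence class can only equate two vectors; it cannot conditionally impose a distance. The paper's solution is a dedicated OR-gadget $C$ containing two vectors $\vartheta_1,\vartheta_2$ such that in \emph{every} $(1,d)$-representation at least one of them has length $1$, while the other is free to be any vector of length at most $\frac{U}{L}$ (this is where $U$ and the parameter $r=\left\lceil\frac{U}{2L}\right\rceil+1$ enter: the truncation at $U$ is intrinsic to the gadget, not something to bolt on afterwards with a disjoint union). One then needs a copy $C^{(i,j)}$ for \emph{every pair} consisting of a red edge $e_i$ of $G$ and a red edge $e_j'$ of $D$, identifying $\vartheta_1^{(i,j)}$ with $e_i$ and $\vartheta_2^{(i,j)}$ with $e_j'$; a pigeonhole argument over this product shows that any representation realizes all of $G$'s red edges or all of $D$'s red edges, which yields exactly $ran\left(\hat{G}_L^U\right)\subseteq ran(G)\cup\left(0,L\right]$. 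Your single switch $H_L$ with ``designated anchor pairs'' cannot reproduce this: distinct red edges of $G$ have distinct vectors, so they cannot all be tied to one anchor pair, and without the per-pair OR structure nothing rules out a hybrid representation for $d>L$ in which some constraints of $G$ are released while $D$ is simultaneously violated. Designing $C$ (the ``turned on / turned off'' gadget with its two long paths) is the hard part of the proof, and it is precisely the part your plan leaves open, as your own closing paragraph concedes.
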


\begin{proof}
Define graphs $C$ and $D$ in the following way, where $r=\left\lceil\frac{U}{2L}\right\rceil+1$:

\begin{figure}
\begin{minipage}{.49\textwidth}
	\centering
	\usetikzlibrary{arrows}
\definecolor{qqzzqq}{rgb}{0,0.6,0}
\definecolor{ffqqqq}{rgb}{1,0,0}
\begin{tikzpicture}[line cap=round,line join=round,scale=0.9]
\clip(-2.77,-1.44) rectangle (5.58,5.7);
\draw [color=ffqqqq] (1,0)-- (0,0);
\draw [color=ffqqqq] (0,0)-- (0.5,0.87);
\draw [color=ffqqqq] (0.5,0.87)-- (1,0);
\draw [color=ffqqqq] (0.5,0.87)-- (-0.5,0.87);
\draw [color=ffqqqq] (-0.5,0.87)-- (0,0);
\draw [shift={(0.5,0.87)},color=qqzzqq]  plot[domain=4.19:5.24,variable=\t]({1*1*cos(\t r)+0*1*sin(\t r)},{0*1*cos(\t r)+1*1*sin(\t r)});
\draw [shift={(0.5,0.87)},color=qqzzqq]  plot[domain=3.14:4.19,variable=\t]({1*1*cos(\t r)+0*1*sin(\t r)},{0*1*cos(\t r)+1*1*sin(\t r)});
\draw [shift={(-1.5,4.33)},color=qqzzqq]  plot[domain=3.14:4.19,variable=\t]({1*1*cos(\t r)+0*1*sin(\t r)},{0*1*cos(\t r)+1*1*sin(\t r)});
\draw [shift={(-1,3.46)},dash pattern=on 2pt off 2pt,color=qqzzqq]  plot[domain=3.14:4.19,variable=\t]({1*1*cos(\t r)+0*1*sin(\t r)},{0*1*cos(\t r)+1*1*sin(\t r)});
\draw [shift={(-0.5,2.6)},color=qqzzqq]  plot[domain=3.14:4.19,variable=\t]({1*1*cos(\t r)+0*1*sin(\t r)},{0*1*cos(\t r)+1*1*sin(\t r)});
\draw [shift={(0,1.73)},color=qqzzqq]  plot[domain=3.14:4.19,variable=\t]({1*1*cos(\t r)+0*1*sin(\t r)},{0*1*cos(\t r)+1*1*sin(\t r)});
\draw [shift={(1.5,0.87)},color=qqzzqq]  plot[domain=4.19:5.24,variable=\t]({1*1*cos(\t r)+0*1*sin(\t r)},{0*1*cos(\t r)+1*1*sin(\t r)});
\draw [shift={(2.5,0.87)},color=qqzzqq]  plot[domain=4.19:5.24,variable=\t]({1*1*cos(\t r)+0*1*sin(\t r)},{0*1*cos(\t r)+1*1*sin(\t r)});
\draw [shift={(3.5,0.87)},dash pattern=on 2pt off 2pt,color=qqzzqq]  plot[domain=4.19:5.24,variable=\t]({1*1*cos(\t r)+0*1*sin(\t r)},{0*1*cos(\t r)+1*1*sin(\t r)});
\draw [shift={(4.5,0.87)},color=qqzzqq]  plot[domain=4.19:5.24,variable=\t]({1*1*cos(\t r)+0*1*sin(\t r)},{0*1*cos(\t r)+1*1*sin(\t r)});
\draw [shift={(1,0)},color=qqzzqq]  plot[domain=4.19:5.24,variable=\t]({0.5*1*cos(\t r)+0.87*1*sin(\t r)},{0.87*1*cos(\t r)+-0.5*1*sin(\t r)});
\draw [shift={(1.5,0.87)},color=qqzzqq]  plot[domain=4.19:5.24,variable=\t]({0.5*1*cos(\t r)+0.87*1*sin(\t r)},{0.87*1*cos(\t r)+-0.5*1*sin(\t r)});
\draw [shift={(2,1.73)},dash pattern=on 2pt off 2pt,color=qqzzqq]  plot[domain=4.19:5.24,variable=\t]({0.5*1*cos(\t r)+0.87*1*sin(\t r)},{0.87*1*cos(\t r)+-0.5*1*sin(\t r)});
\draw [shift={(2.5,2.6)},dash pattern=on 2pt off 2pt,color=qqzzqq]  plot[domain=4.19:5.24,variable=\t]({0.5*1*cos(\t r)+0.87*1*sin(\t r)},{0.87*1*cos(\t r)+-0.5*1*sin(\t r)});
\draw [shift={(3,3.46)},color=qqzzqq]  plot[domain=4.19:5.24,variable=\t]({0.5*1*cos(\t r)+0.87*1*sin(\t r)},{0.87*1*cos(\t r)+-0.5*1*sin(\t r)});
\draw [shift={(3.5,4.33)},color=qqzzqq]  plot[domain=4.19:5.24,variable=\t]({0.5*1*cos(\t r)+0.87*1*sin(\t r)},{0.87*1*cos(\t r)+-0.5*1*sin(\t r)});
\draw [shift={(0.5,-0.87)},color=qqzzqq]  plot[domain=4.19:5.24,variable=\t]({0.5*1*cos(\t r)+0.87*1*sin(\t r)},{0.87*1*cos(\t r)+-0.5*1*sin(\t r)});
\draw [shift={(0,0)},color=qqzzqq]  plot[domain=4.19:5.24,variable=\t]({1*1*cos(\t r)+0*1*sin(\t r)},{0*1*cos(\t r)+1*1*sin(\t r)});
\draw [shift={(2.09,3.01)},color=qqzzqq]  plot[domain=4.32:5.48,variable=\t]({1*4.18*cos(\t r)+0*4.18*sin(\t r)},{0*4.18*cos(\t r)+1*4.18*sin(\t r)});
\draw [color=qqzzqq] (5,0)-- (3.5,4.33);
\draw [shift={(4,5.2)},color=qqzzqq]  plot[domain=4.19:5.24,variable=\t]({-0.5*1*cos(\t r)+0.87*1*sin(\t r)},{0.87*1*cos(\t r)+0.5*1*sin(\t r)});
\draw [color=qqzzqq] (-2.5,4.33)-- (3,5.2);
\begin{scriptsize}
\fill [color=black] (0,0) circle (1.5pt);
\draw[color=black] (0.34,0.12) node {\large{$c_{1,0}$}};
\fill [color=black] (1,0) circle (1.5pt);
\draw[color=black] (1.34,0.12) node {\large{$c_{1,1}$}};
\fill [color=black] (2,0) circle (1.5pt);
\draw[color=black] (2.34,0.12) node {\large{$c_{1,2}$}};
\fill [color=black] (3,0) circle (1.5pt);
\draw[color=black] (3.34,0.12) node {\large{$c_{1,3}$}};
\fill [color=black] (4,0) circle (1.5pt);
\draw[color=black] (4.42,0.12) node {\large{$c_{1,2r-1}$}};
\fill [color=black] (5,0) circle (1.5pt);
\draw[color=black] (5.36,0.12) node {\large{$c_{1,2r}$}};
\fill [color=black] (-0.5,0.87) circle (1.5pt);
\draw[color=black] (-0.15,0.98) node {\large{$c_{1,-1}$}};
\fill [color=black] (-1,1.73) circle (1.5pt);
\draw[color=black] (-0.65,1.85) node {\large{$c_{1,-2}$}};
\fill [color=black] (-1.5,2.6) circle (1.5pt);
\draw[color=black] (-1.15,2.72) node {\large{$c_{1,-3}$}};
\fill [color=black] (-2,3.46) circle (1.5pt);
\draw[color=black] (-1.54,3.58) node {\large{$c_{1,-2r+1}$}};
\fill [color=black] (-2.5,4.33) circle (1.5pt);
\draw[color=black] (-2.12,4.45) node {\large{$c_{1,-2r}$}};
\fill [color=black] (0.5,0.87) circle (1.5pt);
\draw[color=black] (0.91,0.98) node {\large{$c_{1,1^*}$}};
\draw[color=red] (0.59,-0.36) node {\large{1}};
\draw[color=red] (-0.28,0.49) node {\large{3}};
\draw[color=red] (-2.28,3.95) node {\large{3}};
\draw[color=red] (-1.28,2.22) node {\large{3}};
\draw[color=red] (-0.78,1.36) node {\large{3}};
\draw[color=red] (1.59,-0.36) node {\large{1}};
\draw[color=red] (2.59,-0.36) node {\large{1}};
\draw[color=red] (4.59,-0.36) node {\large{1}};
\draw[color=red] (0.22,0.62) node {\large{2}};
\draw[color=red] (0.73,1.49) node {\large{2}};
\draw[color=red] (2.23,4.09) node {\large{2}};
\fill [color=black] (1,1.73) circle (1.5pt);
\draw[color=black] (1.41,1.85) node {\large{$c_{1,2^*}$}};
\fill [color=black] (1.5,2.6) circle (1.5pt);
\draw[color=black] (1.9,2.72) node {\large{$c_{1,r^*}$}};
\fill [color=black] (2,3.46) circle (1.5pt);
\draw[color=black] (2.66,3.54) node {\large{$c_{1,2r-1^*}$}};
\fill [color=black] (2.5,4.33) circle (1.5pt);
\draw[color=black] (2.94,4.45) node {\large{$c_{1,2r^*}$}};
\draw[color=red] (2.73,4.95) node {\large{2}};
\fill [color=black] (3,5.2) circle (1.5pt);
\draw[color=black] (3.52,5.31) node {\large{$c_{1,2r+1^*}$}};
\draw[color=red] (-0.28,-0.24) node {\large{2}};
\fill [color=black] (-0.5,-0.87) circle (1.5pt);
\draw[color=black] (-0.08,-0.75) node {\large{$c_{1,-1^*}$}};
\fill [color=black] (3.5,4.33) circle (1.5pt);
\draw[color=black] (3.98,4.45) node {\large{$c_{1,2r+2*}$}};
\fill [color=black] (0.5,-0.87) circle (1.5pt);
\draw[color=black] (0.93,-0.75) node {\large{$c_{1,-2^*}$}};
\draw[color=red] (0.09,-1.23) node {\large{1}};
\draw[color=red] (2.97,-0.98) node {\large{4}};
\draw[color=red] (4.42,2.29) node {\large{5}};
\draw[color=red] (3.23,4.82) node {\large{3}};
\draw[color=red] (0.31,4.68) node {\large{6}};
\end{scriptsize}
\end{tikzpicture}
	\captionof{figure}{$C_1$ drawn in its only possible $(1,d)$-representation up to isometry}
	\label{C6}
\end{minipage}
\begin{minipage}{.49\textwidth}
	\centering
	\usetikzlibrary{arrows}
\definecolor{qqzzqq}{rgb}{0,0.6,0}
\definecolor{ffqqqq}{rgb}{1,0,0}
\begin{tikzpicture}[line cap=round,line join=round,scale=0.9]
\clip(-2.98,-2.88) rectangle (5.75,5.82);
\draw [color=ffqqqq] (1,0)-- (0,0);
\draw [color=ffqqqq] (0,0)-- (0.5,0.87);
\draw [color=ffqqqq] (-0.5,0.87)-- (0,0);
\draw [shift={(0.5,0.87)},color=qqzzqq]  plot[domain=4.19:5.24,variable=\t]({1*1*cos(\t r)+0*1*sin(\t r)},{0*1*cos(\t r)+1*1*sin(\t r)});
\draw [shift={(0.5,0.87)},color=qqzzqq]  plot[domain=3.14:4.19,variable=\t]({1*1*cos(\t r)+0*1*sin(\t r)},{0*1*cos(\t r)+1*1*sin(\t r)});
\draw [shift={(-1.5,4.33)},color=qqzzqq]  plot[domain=3.14:4.19,variable=\t]({1*1*cos(\t r)+0*1*sin(\t r)},{0*1*cos(\t r)+1*1*sin(\t r)});
\draw [shift={(-1,3.46)},dash pattern=on 2pt off 2pt,color=qqzzqq]  plot[domain=3.14:4.19,variable=\t]({1*1*cos(\t r)+0*1*sin(\t r)},{0*1*cos(\t r)+1*1*sin(\t r)});
\draw [shift={(-0.5,2.6)},color=qqzzqq]  plot[domain=3.14:4.19,variable=\t]({1*1*cos(\t r)+0*1*sin(\t r)},{0*1*cos(\t r)+1*1*sin(\t r)});
\draw [shift={(0,1.73)},color=qqzzqq]  plot[domain=3.14:4.19,variable=\t]({1*1*cos(\t r)+0*1*sin(\t r)},{0*1*cos(\t r)+1*1*sin(\t r)});
\draw [shift={(1.5,0.87)},color=qqzzqq]  plot[domain=4.19:5.24,variable=\t]({1*1*cos(\t r)+0*1*sin(\t r)},{0*1*cos(\t r)+1*1*sin(\t r)});
\draw [shift={(2.5,0.87)},color=qqzzqq]  plot[domain=4.19:5.24,variable=\t]({1*1*cos(\t r)+0*1*sin(\t r)},{0*1*cos(\t r)+1*1*sin(\t r)});
\draw [shift={(3.5,0.87)},dash pattern=on 2pt off 2pt,color=qqzzqq]  plot[domain=4.19:5.24,variable=\t]({1*1*cos(\t r)+0*1*sin(\t r)},{0*1*cos(\t r)+1*1*sin(\t r)});
\draw [shift={(4.5,0.87)},color=qqzzqq]  plot[domain=4.19:5.24,variable=\t]({1*1*cos(\t r)+0*1*sin(\t r)},{0*1*cos(\t r)+1*1*sin(\t r)});
\draw [color=ffqqqq] (0.5,0.87)-- (1,1.73);
\draw [dash pattern=on 2pt off 2pt,color=ffqqqq] (1,1.73)-- (1.5,2.6);
\draw [dash pattern=on 2pt off 2pt,color=ffqqqq] (1.5,2.6)-- (2,3.46);
\draw [color=ffqqqq] (2,3.46)-- (2.5,4.33);
\draw [color=ffqqqq] (2.5,4.33)-- (3,5.2);
\draw [color=ffqqqq] (0,0)-- (-0.95,0.32);
\draw [color=ffqqqq] (-0.95,0.32)-- (-1.86,-0.1);
\draw [dash pattern=on 2pt off 2pt,color=ffqqqq] (-1.86,-0.1)-- (-1.51,-1.04);
\draw [dash pattern=on 2pt off 2pt,color=ffqqqq] (-1.51,-1.04)-- (-1.28,-2.02);
\draw [color=ffqqqq] (-1.28,-2.02)-- (-0.29,-1.84);
\draw [color=ffqqqq] (-0.29,-1.84)-- (-0.5,-0.87);
\draw [color=ffqqqq] (1,0)-- (1.5,-0.87);
\draw [color=ffqqqq] (1.5,-0.87)-- (1.45,-1.87);
\draw [dash pattern=on 2pt off 2pt,color=ffqqqq] (1.45,-1.87)-- (0.78,-2.62);
\draw [dash pattern=on 2pt off 2pt,color=ffqqqq] (0.78,-2.62)-- (0.18,-1.82);
\draw [color=ffqqqq] (0.18,-1.82)-- (0.5,-0.87);
\draw [color=ffqqqq] (1,0)-- (1.5,0.87);
\draw [color=ffqqqq] (1.5,0.87)-- (2,1.73);
\draw [dash pattern=on 2pt off 2pt,color=ffqqqq] (2,1.73)-- (2.5,2.6);
\draw [dash pattern=on 2pt off 2pt,color=ffqqqq] (2.5,2.6)-- (3,3.46);
\draw [color=ffqqqq] (3,3.46)-- (3.5,4.33);
\draw [shift={(0,0)},color=qqzzqq]  plot[domain=4.19:5.24,variable=\t]({1*1*cos(\t r)+0*1*sin(\t r)},{0*1*cos(\t r)+1*1*sin(\t r)});
\draw [shift={(2.09,3.01)},color=qqzzqq]  plot[domain=4.32:5.48,variable=\t]({1*4.18*cos(\t r)+0*4.18*sin(\t r)},{0*4.18*cos(\t r)+1*4.18*sin(\t r)});
\draw [color=qqzzqq] (5,0)-- (3.5,4.33);
\draw [shift={(4,5.2)},color=qqzzqq]  plot[domain=4.19:5.24,variable=\t]({-0.5*1*cos(\t r)+0.87*1*sin(\t r)},{0.87*1*cos(\t r)+0.5*1*sin(\t r)});
\draw [color=qqzzqq] (-2.5,4.33)-- (3,5.2);
\draw [->] (1.5,2.6) -- (2.5,2.6);
\draw [->] (-1.51,-1.04) -- (0.78,-2.62);
\begin{scriptsize}
\fill [color=black] (0,0) circle (1.5pt);
\draw[color=black] (0.34,0.12) node {$c_{2,0}$};
\fill [color=black] (1,0) circle (1.5pt);
\draw[color=black] (1.34,0.12) node {$c_{2,1}$};
\fill [color=black] (2,0) circle (1.5pt);
\draw[color=black] (2.34,0.12) node {$c_{2,2}$};
\fill [color=black] (3,0) circle (1.5pt);
\draw[color=black] (3.34,0.12) node {$c_{2,3}$};
\fill [color=black] (4,0) circle (1.5pt);
\draw[color=black] (4.42,0.12) node {$c_{2,2r-1}$};
\fill [color=black] (5,0) circle (1.5pt);
\draw[color=black] (5.36,0.12) node {$c_{2,2r}$};
\fill [color=black] (-0.5,0.87) circle (1.5pt);
\draw[color=black] (-0.15,0.98) node {$c_{2,-1}$};
\fill [color=black] (-1,1.73) circle (1.5pt);
\draw[color=black] (-0.65,1.85) node {$c_{2,-2}$};
\fill [color=black] (-1.5,2.6) circle (1.5pt);
\draw[color=black] (-1.15,2.72) node {$c_{2,-3}$};
\fill [color=black] (-2,3.46) circle (1.5pt);
\draw[color=black] (-1.54,3.58) node {$c_{2,-2r+1}$};
\fill [color=black] (-2.5,4.33) circle (1.5pt);
\draw[color=black] (-2.12,4.45) node {$c_{2,-2r}$};
\fill [color=black] (0.5,0.87) circle (1.5pt);
\draw[color=black] (0.91,0.98) node {$c_{2,1^*}$};
\draw[color=red] (0.59,-0.01) node {8};
\draw[color=red] (-0.28,0.49) node {7};
\draw[color=red] (-2.28,3.95) node {7};
\draw[color=red] (-1.28,2.22) node {7};
\draw[color=red] (-0.78,1.36) node {7};
\draw[color=red] (1.59,-0.01) node {8};
\draw[color=red] (2.59,-0.01) node {8};
\draw[color=red] (4.59,-0.01) node {8};
\fill [color=black] (1,1.73) circle (1.5pt);
\draw[color=black] (1.41,1.85) node {$c_{2,2^*}$};
\fill [color=black] (1.5,2.6) circle (1.5pt);
\draw[color=black] (1.9,2.72) node {$c_{2,r^*}$};
\fill [color=black] (2,3.46) circle (1.5pt);
\draw[color=black] (2.66,3.54) node {$c_{2,2r-1^*}$};
\fill [color=black] (2.5,4.33) circle (1.5pt);
\draw[color=black] (2.94,4.45) node {$c_{2,2r^*}$};
\fill [color=black] (3,5.2) circle (1.5pt);
\draw[color=black] (3.52,5.31) node {$c_{2,2r+1^*}$};
\fill [color=black] (-0.5,-0.87) circle (1.5pt);
\draw[color=black] (0,-0.75) node {$c_{2,-2r-1^*}$};
\fill [color=black] (3.5,4.33) circle (1.5pt);
\draw[color=black] (3.87,4.45) node {$c_{2,2r'}$};
\fill [color=black] (0.5,-0.87) circle (1.5pt);
\draw[color=black] (0.89,-0.75) node {$c_{2,-2r'}$};
\fill [color=black] (-0.95,0.32) circle (1.5pt);
\draw[color=black] (-0.52,0.44) node {$c_{2,-1^*}$};
\fill [color=black] (-1.86,-0.1) circle (1.5pt);
\draw[color=black] (-1.43,0.02) node {$c_{2,-2^*}$};
\fill [color=black] (-1.51,-1.04) circle (1.5pt);
\draw[color=black] (-1.1,-0.92) node {$c_{2,-r^*}$};
\fill [color=black] (-1.28,-2.02) circle (1.5pt);
\draw[color=black] (-0.75,-1.9) node {$c_{2,-2r+1^*}$};
\fill [color=black] (-0.29,-1.84) circle (1.5pt);
\draw[color=black] (0.16,-1.72) node {$c_{2,-2r^*}$};
\fill [color=black] (1.5,-0.87) circle (1.5pt);
\draw[color=black] (1.86,-0.75) node {$c_{2,-1'}$};
\fill [color=black] (1.45,-1.87) circle (1.5pt);
\draw[color=black] (1.82,-1.75) node {$c_{2,-2'}$};
\fill [color=black] (0.78,-2.62) circle (1.5pt);
\draw[color=black] (1.13,-2.5) node {$c_{2,-r'}$};
\fill [color=black] (0.18,-1.82) circle (1.5pt);
\draw[color=black] (0.65,-1.69) node {$c_{2,-2r+1'}$};
\fill [color=black] (1.5,0.87) circle (1.5pt);
\draw[color=black] (1.86,0.98) node {$c_{2,1'}$};
\fill [color=black] (2,1.73) circle (1.5pt);
\draw[color=black] (2.36,1.85) node {$c_{2,2'}$};
\fill [color=black] (2.5,2.6) circle (1.5pt);
\draw[color=black] (2.84,2.72) node {$c_{2,r'}$};
\fill [color=black] (3,3.46) circle (1.5pt);
\draw[color=black] (3.43,3.58) node {$c_{2,2r-1'}$};
\draw[color=red] (0.09,-0.88) node {1};
\draw[color=red] (2.97,-0.98) node {4};
\draw[color=red] (4.42,2.29) node {5};
\draw[color=red] (3.23,4.82) node {3};
\draw[color=red] (0.31,4.68) node {6};
\draw[color=black] (2.39,2.7) node {$\vartheta_1$};
\draw[color=black] (0.05,-1.73) node {$\vartheta_2$};
\fill [color=black] (0.25,-0.43) circle (1.5pt);
\draw[color=black] (0.47,-0.31) node {$O_1$};
\end{scriptsize}
\end{tikzpicture}
	\captionof{figure}{$C_2$ drawn in one of its possible $(1,d)$-representations: when $C$ is turned on}
	\label{C7}
\end{minipage}
\end{figure}
\begin{figure}
\begin{minipage}{.49\textwidth}
	\centering
	\usetikzlibrary{arrows}
\definecolor{uququq}{rgb}{0.25,0.25,0.25}
\definecolor{qqzzqq}{rgb}{0,0.6,0}
\definecolor{ffqqqq}{rgb}{1,0,0}
\begin{tikzpicture}[line cap=round,line join=round,scale=0.9]
\clip(-2.73,-1.38) rectangle (5.75,6.71);
\draw [color=ffqqqq] (0,0)-- (0.5,0.87);
\draw [shift={(3.5,4.33)},color=qqzzqq]  plot[domain=4.19:5.24,variable=\t]({0.5*1*cos(\t r)+0.87*1*sin(\t r)},{0.87*1*cos(\t r)+-0.5*1*sin(\t r)});
\draw [dash pattern=on 2pt off 2pt,color=ffqqqq] (0.5,0.87)-- (1,1.73);
\draw [dash pattern=on 2pt off 2pt,color=ffqqqq] (1,1.73)-- (1.5,2.6);
\draw [color=ffqqqq] (1.5,2.6)-- (2,3.46);
\draw [color=ffqqqq] (2,3.46)-- (2.5,4.33);
\draw [dash pattern=on 2pt off 2pt,color=ffqqqq] (1,0)-- (1.5,0.87);
\draw [dash pattern=on 2pt off 2pt,color=ffqqqq] (1.5,0.87)-- (2,1.73);
\draw [color=ffqqqq] (2,1.73)-- (2.5,2.6);
\draw [color=ffqqqq] (2.5,2.6)-- (3,3.46);
\draw [shift={(0,0)},color=qqzzqq]  plot[domain=4.19:5.24,variable=\t]({1*1*cos(\t r)+0*1*sin(\t r)},{0*1*cos(\t r)+1*1*sin(\t r)});
\draw [shift={(2.09,3.01)},color=qqzzqq]  plot[domain=4.32:5.48,variable=\t]({1*4.18*cos(\t r)+0*4.18*sin(\t r)},{0*4.18*cos(\t r)+1*4.18*sin(\t r)});
\draw [color=qqzzqq] (5,0)-- (3.5,4.33);
\draw [shift={(4,5.2)},color=qqzzqq]  plot[domain=4.19:5.24,variable=\t]({-0.5*1*cos(\t r)+0.87*1*sin(\t r)},{0.87*1*cos(\t r)+0.5*1*sin(\t r)});
\draw [color=qqzzqq] (-2.5,4.33)-- (3,5.2);
\draw [shift={(5.5,0.87)},color=qqzzqq]  plot[domain=4.19:5.24,variable=\t]({0.5*1*cos(\t r)+0.87*1*sin(\t r)},{-0.87*1*cos(\t r)+0.5*1*sin(\t r)});
\draw [shift={(5,1.73)},dash pattern=on 2pt off 2pt,color=qqzzqq]  plot[domain=4.19:5.24,variable=\t]({0.5*1*cos(\t r)+0.87*1*sin(\t r)},{-0.87*1*cos(\t r)+0.5*1*sin(\t r)});
\draw [shift={(4.5,2.6)},color=qqzzqq]  plot[domain=4.19:5.24,variable=\t]({0.5*1*cos(\t r)+0.87*1*sin(\t r)},{-0.87*1*cos(\t r)+0.5*1*sin(\t r)});
\draw [shift={(4,3.46)},color=qqzzqq]  plot[domain=4.19:5.24,variable=\t]({0.5*1*cos(\t r)+0.87*1*sin(\t r)},{-0.87*1*cos(\t r)+0.5*1*sin(\t r)});
\draw [shift={(3.5,4.33)},color=qqzzqq]  plot[domain=4.19:5.24,variable=\t]({0.5*1*cos(\t r)+0.87*1*sin(\t r)},{-0.87*1*cos(\t r)+0.5*1*sin(\t r)});
\draw [shift={(-2,5.2)},color=qqzzqq]  plot[domain=3.14:4.19,variable=\t]({0.5*1*cos(\t r)+-0.87*1*sin(\t r)},{0.87*1*cos(\t r)+0.5*1*sin(\t r)});
\draw [shift={(-1,5.2)},dash pattern=on 2pt off 2pt,color=qqzzqq]  plot[domain=3.14:4.19,variable=\t]({0.5*1*cos(\t r)+-0.87*1*sin(\t r)},{0.87*1*cos(\t r)+0.5*1*sin(\t r)});
\draw [shift={(0,5.2)},color=qqzzqq]  plot[domain=3.14:4.19,variable=\t]({0.5*1*cos(\t r)+-0.87*1*sin(\t r)},{0.87*1*cos(\t r)+0.5*1*sin(\t r)});
\draw [shift={(1,5.2)},color=qqzzqq]  plot[domain=3.14:4.19,variable=\t]({0.5*1*cos(\t r)+-0.87*1*sin(\t r)},{0.87*1*cos(\t r)+0.5*1*sin(\t r)});
\draw [shift={(2,5.2)},color=qqzzqq]  plot[domain=3.14:4.19,variable=\t]({0.5*1*cos(\t r)+-0.87*1*sin(\t r)},{0.87*1*cos(\t r)+0.5*1*sin(\t r)});
\draw [color=ffqqqq] (0,0)-- (-0.5,-0.87);
\draw [color=ffqqqq] (1,0)-- (0.5,-0.87);
\draw [color=ffqqqq] (2.5,4.33)-- (2.37,5.32);
\draw [color=ffqqqq] (2.37,5.32)-- (1.38,5.47);
\draw [dash pattern=on 2pt off 2pt,color=ffqqqq] (1.38,5.47)-- (2.05,6.21);
\draw [dash pattern=on 2pt off 2pt,color=ffqqqq] (2.05,6.21)-- (3.04,6.36);
\draw [color=ffqqqq] (3,3.46)-- (3.92,3.85);
\draw [color=ffqqqq] (3.92,3.85)-- (4.47,4.69);
\draw [dash pattern=on 2pt off 2pt,color=ffqqqq] (4.47,4.69)-- (4.36,5.68);
\draw [dash pattern=on 2pt off 2pt,color=ffqqqq] (4.36,5.68)-- (3.42,5.33);
\draw [color=ffqqqq] (3.42,5.33)-- (3.5,4.33);
\draw [color=ffqqqq] (3,5.2)-- (2.21,5.81);
\draw [color=ffqqqq] (2.21,5.81)-- (3.04,6.36);
\draw [->] (2.05,6.21) -- (4.36,5.68);
\draw [->] (1,1.73) -- (1.5,0.87);
\begin{scriptsize}
\fill [color=black] (0,0) circle (1.5pt);
\draw[color=black] (0.45,0.13) node {$c_{2,2r^*}$};
\fill [color=black] (1,0) circle (1.5pt);
\draw[color=black] (1.38,0.13) node {$c_{2,-2r-1'}$};
\fill [color=black] (5,0) circle (1.5pt);
\draw[color=black] (5.37,0.13) node {$c_{2,2r}$};
\fill [color=black] (-2.5,4.33) circle (1.5pt);
\draw[color=black] (-2.11,4.46) node {$c_{2,-2r}$};
\fill [color=black] (0.5,0.87) circle (1.5pt);
\draw[color=black] (1.05,0.99) node {$c_{2,-2r+1^*}$};
\fill [color=black] (1,1.73) circle (1.5pt);
\draw[color=black] (1.42,1.86) node {$c_{2,-r^*}$};
\fill [color=black] (1.5,2.6) circle (1.5pt);
\draw[color=black] (1.93,2.72) node {$c_{2,-2^*}$};
\fill [color=black] (2,3.46) circle (1.5pt);
\draw[color=black] (2.6,3.54) node {$c_{2,-1^*}$};
\fill [color=black] (2.5,4.33) circle (1.5pt);
\draw[color=black] (2.84,4.46) node {$c_{2,0}$};
\draw[color=qqzzqq] (2.72,4.95) node {2};
\fill [color=black] (3,5.2) circle (1.5pt);
\draw[color=black] (3.52,5.32) node {$c_{2,2r+1^*}$};
\fill [color=black] (-0.5,-0.87) circle (1.5pt);
\draw[color=black] (0.02,-0.74) node {$c_{2,-2r-1^*}$};
\fill [color=black] (3.5,4.33) circle (1.5pt);
\draw[color=black] (3.88,4.46) node {$c_{2,2r'}$};
\fill [color=black] (0.5,-0.87) circle (1.5pt);
\draw[color=black] (0.9,-0.74) node {$c_{2,-2r'}$};
\fill [color=black] (1.5,0.87) circle (1.5pt);
\draw[color=black] (1.84,0.99) node {$c_{2,r'}$};
\fill [color=black] (2,1.73) circle (1.5pt);
\draw[color=black] (2.36,1.86) node {$c_{2,2'}$};
\fill [color=black] (2.5,2.6) circle (1.5pt);
\draw[color=black] (2.85,2.72) node {$c_{2,1'}$};
\fill [color=black] (3,3.46) circle (1.5pt);
\draw[color=black] (3.33,3.59) node {$c_{2,1}$};
\draw[color=red] (0.09,-0.87) node {1};
\draw[color=red] (2.97,-0.97) node {4};
\draw[color=red] (4.43,2.28) node {5};
\draw[color=red] (3.22,4.82) node {3};
\draw[color=red] (0.3,4.68) node {6};
\draw[color=red] (4.73,0.49) node {8};
\fill [color=black] (4.5,0.87) circle (1.5pt);
\draw[color=black] (4.93,0.99) node {$c_{2,2r-1}$};
\fill [color=black] (4,1.73) circle (1.5pt);
\draw[color=black] (4.34,1.86) node {$c_{2,3}$};
\draw[color=red] (3.72,2.22) node {8};
\fill [color=black] (3.5,2.6) circle (1.5pt);
\draw[color=black] (3.84,2.72) node {$c_{2,2}$};
\draw[color=red] (3.22,3.09) node {8};
\draw[color=red] (2.72,3.96) node {8};
\draw[color=red] (-1.91,4.32) node {7};
\fill [color=black] (-1.5,4.33) circle (1.5pt);
\draw[color=black] (-1.08,4.46) node {$c_{2,2r-1}$};
\fill [color=black] (-0.5,4.33) circle (1.5pt);
\draw[color=black] (-0.14,4.46) node {$c_{2,-3}$};
\draw[color=red] (0.09,4.32) node {7};
\fill [color=black] (0.5,4.33) circle (1.5pt);
\draw[color=black] (0.86,4.46) node {$c_{2,-2}$};
\draw[color=red] (1.09,4.32) node {7};
\fill [color=black] (1.5,4.33) circle (1.5pt);
\draw[color=black] (1.86,4.46) node {$c_{2,-1}$};
\draw[color=red] (2.09,4.32) node {7};
\fill [color=black] (2.37,5.32) circle (1.5pt);
\draw[color=black] (2.79,5.44) node {$c_{2,1^*}$};
\fill [color=black] (1.38,5.47) circle (1.5pt);
\draw[color=black] (1.8,5.59) node {$c_{2,2^*}$};
\fill [color=black] (2.05,6.21) circle (1.5pt);
\draw[color=black] (2.46,6.34) node {$c_{2,r^*}$};
\fill [color=black] (3.04,6.36) circle (1.5pt);
\draw[color=black] (3.49,6.49) node {$c_{2,2r-1^*}$};
\fill [color=black] (3.92,3.85) circle (1.5pt);
\draw[color=black] (4.28,3.98) node {$c_{2,1'}$};
\fill [color=black] (4.47,4.69) circle (1.5pt);
\draw[color=black] (4.82,4.81) node {$c_{2,2'}$};
\fill [color=black] (4.36,5.68) circle (1.5pt);
\draw[color=black] (4.69,5.8) node {$c_{2,r'}$};
\fill [color=black] (3.42,5.33) circle (1.5pt);
\draw[color=black] (3.84,5.45) node {$c_{2,2r-1'}$};
\fill [color=black] (2.21,5.81) circle (1.5pt);
\draw[color=black] (2.35,5.93) node {$c_{2,2r^*}$};
\draw[color=black] (3.61,6.06) node {$\vartheta_1$};
\draw[color=black] (1.68,1.39) node {$\vartheta_2$};
\fill [color=black] (3,4.33) circle (1.5pt);
\draw[color=black] (3.21,4.46) node {$O_2$};
\end{scriptsize}
\end{tikzpicture}
	\captionof{figure}{$C_2$ drawn in its other possible $(1,d)$-representation: when $C$ is turned off}
	\label{C8}
\end{minipage}
\begin{minipage}{.49\textwidth}
	\centering
	\usetikzlibrary{arrows}
\definecolor{qqzzqq}{rgb}{0,0.6,0}
\definecolor{qqqqff}{rgb}{0,0,1}
\definecolor{ffqqqq}{rgb}{1,0,0}
\begin{tikzpicture}[line cap=round,line join=round]
\clip(-1.6,-5.81) rectangle (5.39,-1.94);
\draw (0.9,-1.94) node[anchor=north west] {$L_1$ red edges};
\draw (0.78,-5.11) node[anchor=north west] {$L_2$ green edges};
\draw [color=ffqqqq] (-0.53,-3.61)-- (-1.19,-4.1);
\draw [color=ffqqqq] (0.2,-3.24)-- (-0.53,-3.61);
\draw [color=ffqqqq] (0.99,-3.01)-- (0.2,-3.24);
\draw [dash pattern=on 2pt off 2pt,color=ffqqqq] (1.81,-2.93)-- (0.99,-3.01);
\draw [color=ffqqqq] (2.63,-3.01)-- (1.81,-2.93);
\draw [color=ffqqqq] (3.42,-3.24)-- (2.63,-3.01);
\draw [color=ffqqqq] (4.16,-3.61)-- (3.42,-3.24);
\draw [color=ffqqqq] (4.81,-4.1)-- (4.16,-3.61);
\draw [color=qqqqff] (4.81,-4.1)-- (3.81,-4.1);
\draw [shift={(4.31,-3.84)},color=qqzzqq]  plot[domain=2.05:4.23,variable=\t]({0*0.56*cos(\t r)+-1*0.56*sin(\t r)},{1*0.56*cos(\t r)+0*0.56*sin(\t r)});
\draw [shift={(3.31,-3.84)},color=qqzzqq]  plot[domain=2.05:4.23,variable=\t]({0*0.56*cos(\t r)+-1*0.56*sin(\t r)},{1*0.56*cos(\t r)+0*0.56*sin(\t r)});
\draw [shift={(2.31,-3.84)},color=qqzzqq]  plot[domain=2.05:4.23,variable=\t]({0*0.56*cos(\t r)+-1*0.56*sin(\t r)},{1*0.56*cos(\t r)+0*0.56*sin(\t r)});
\draw [shift={(1.31,-3.84)},dash pattern=on 2pt off 2pt,color=qqzzqq]  plot[domain=2.05:4.23,variable=\t]({0*0.56*cos(\t r)+-1*0.56*sin(\t r)},{1*0.56*cos(\t r)+0*0.56*sin(\t r)});
\draw [shift={(0.31,-3.84)},color=qqzzqq]  plot[domain=2.05:4.23,variable=\t]({0*0.56*cos(\t r)+-1*0.56*sin(\t r)},{1*0.56*cos(\t r)+0*0.56*sin(\t r)});
\draw [shift={(-0.69,-3.84)},color=qqzzqq]  plot[domain=2.05:4.23,variable=\t]({0*0.56*cos(\t r)+-1*0.56*sin(\t r)},{1*0.56*cos(\t r)+0*0.56*sin(\t r)});
\draw [shift={(-0.45,-3.5)}] plot[domain=1.57:3.14,variable=\t]({1*0.74*cos(\t r)+0*0.74*sin(\t r)},{0*0.74*cos(\t r)+1*0.74*sin(\t r)});
\draw [shift={(-0.82,-4.53)}] plot[domain=3.14:4.71,variable=\t]({1*0.37*cos(\t r)+0*0.37*sin(\t r)},{0*0.37*cos(\t r)+1*0.37*sin(\t r)});
\draw [shift={(4.44,-4.53)}] plot[domain=-1.57:0,variable=\t]({1*0.37*cos(\t r)+0*0.37*sin(\t r)},{0*0.37*cos(\t r)+1*0.37*sin(\t r)});
\draw [shift={(4.07,-3.5)}] plot[domain=0:1.57,variable=\t]({1*0.74*cos(\t r)+0*0.74*sin(\t r)},{0*0.74*cos(\t r)+1*0.74*sin(\t r)});
\draw [shift={(1.6,-5.14)}] plot[domain=0:1.57,variable=\t]({1*0.24*cos(\t r)+0*0.24*sin(\t r)},{0*0.24*cos(\t r)+1*0.24*sin(\t r)});
\draw [shift={(2.09,-5.14)}] plot[domain=1.57:3.14,variable=\t]({1*0.24*cos(\t r)+0*0.24*sin(\t r)},{0*0.24*cos(\t r)+1*0.24*sin(\t r)});
\draw [shift={(1.6,-2.52)}] plot[domain=-1.57:0,variable=\t]({1*0.24*cos(\t r)+0*0.24*sin(\t r)},{0*0.24*cos(\t r)+1*0.24*sin(\t r)});
\draw [shift={(2.09,-2.52)}] plot[domain=3.14:4.71,variable=\t]({1*0.24*cos(\t r)+0*0.24*sin(\t r)},{0*0.24*cos(\t r)+1*0.24*sin(\t r)});
\draw (-0.45,-2.76)-- (1.6,-2.76);
\draw (2.09,-2.76)-- (4.07,-2.76);
\draw (4.44,-4.9)-- (2.09,-4.9);
\draw (1.6,-4.9)-- (-0.82,-4.9);
\begin{scriptsize}
\fill [color=black] (-1.19,-4.1) circle (1.5pt);
\fill [color=black] (-0.53,-3.61) circle (1.5pt);
\fill [color=black] (-1.19,-4.1) circle (1.5pt);
\fill [color=black] (0.2,-3.24) circle (1.5pt);
\fill [color=black] (0.2,-3.24) circle (1.5pt);
\fill [color=black] (0.99,-3.01) circle (1.5pt);
\fill [color=black] (0.2,-3.24) circle (1.5pt);
\fill [color=black] (1.81,-2.93) circle (1.5pt);
\fill [color=black] (1.81,-2.93) circle (1.5pt);
\fill [color=black] (2.63,-3.01) circle (1.5pt);
\fill [color=black] (1.81,-2.93) circle (1.5pt);
\fill [color=black] (3.42,-3.24) circle (1.5pt);
\fill [color=black] (2.63,-3.01) circle (1.5pt);
\fill [color=black] (4.16,-3.61) circle (1.5pt);
\fill [color=black] (3.42,-3.24) circle (1.5pt);
\fill [color=black] (4.81,-4.1) circle (1.5pt);
\fill [color=black] (4.16,-3.61) circle (1.5pt);
\fill [color=black] (4.81,-4.1) circle (1.5pt);
\fill [color=black] (3.81,-4.1) circle (1.5pt);
\draw[color=red] (4.4,-4.65) node {\large{1}};
\draw[color=red] (3.4,-4.65) node {\large{1}};
\draw[color=red] (2.4,-4.65) node {\large{1}};
\draw[color=red] (0.41,-4.65) node {\large{1}};
\draw[color=red] (-0.59,-4.65) node {\large{1}};
\fill [color=black] (-1.19,-4.1) circle (1.5pt);
\fill [color=black] (-0.19,-4.1) circle (1.5pt);
\fill [color=black] (0.81,-4.1) circle (1.5pt);
\fill [color=black] (1.81,-4.1) circle (1.5pt);
\fill [color=black] (2.81,-4.1) circle (1.5pt);
\fill [color=black] (3.81,-4.1) circle (1.5pt);
\fill [color=black] (4.81,-4.1) circle (1.5pt);
\end{scriptsize}
\end{tikzpicture}
	\captionof{figure}{Graph $D$}
	\label{D3}
\end{minipage}
\end{figure}

\begin{lemma}
In every $(1,d)$-representation of $C$, at least one of vectors $\vartheta_1$ and $\vartheta_2$ has length $1$. Also, for any pair of vectors $\left(w_1,w_2\right)$ with
\[
\left(\left(\left\lvert w_1\right\rvert=1\right)\land\left(\left\lvert w_2\right\rvert\le\frac{U}{L}\right)\right)\lor\left(\left(\left\lvert w_1\right\rvert\le\frac{U}{L}\right)\land\left(\left\lvert w_2\right\rvert=1\right)\right),
\]
there exists a $(1,d)$-representation of $C$ in which $\vartheta_1=w_1$ and $\vartheta_2=w_2$.
\end{lemma}

\begin{proof}
For $C_1$, the drawn one is the only $(1,d)$-representation up to isometry, since rhombus $c_{1,0}c_{1,1}c_{1,1^*}c_{1,-1}$ is rigid and with the green edges numbered by $1$, $2$ and $3$, we can get to all the vertices of $C_1$.

This means that in any $(1,d)$-representation of $C_2$, the locations of $c_{2,-2r-1^*}$, $c_{2,-2r'}$, $c_{2,2r}$, $c_{2,2r'}$, $c_{2,2r+1^*}$ and $c_{2,-2r}$ are fixed up to isometry and since the distance of $c_{2,-2r}$ and $c_{2,2r}$ is $2r\cdot\sqrt{3}$, the only ways the path $c_{2,i}$ $(-2r\le i\le 2r)$ can be drawn are the two drawn ones. Call the first possibility the case, when $C$ is {\it turned on} and the second possibility the case, when $C$ is {\it turned off}. If $C$ is turned on, then $\left\lvert c_{2,0}c_{2,2r+1^*}\right\rvert=2r+1$ and $\left\lvert c_{2,1}c_{2,2r'}\right\rvert=2r$, thus paths $c_{2,0}c_{2,1^*}c_{2,2^*}...c_{2,2r+1^*}$ and $c_{2,1}c_{2,1^*}c_{2,2^*}...c_{2,2r'}$ only have the $(1,d)$-representation drawn in Figure \ref{C7} in which $\left\lvert\vartheta_1\right\rvert=1$.

If $C$ is turned off, then $\left\lvert c_{2,0}c_{2,-2r-1^*}\right\rvert=2r+1$ and $\left\lvert c_{2,0}c_{2,-2r-1'}\right\rvert=2r$, thus paths $c_{2,0}c_{2,-1^*}c_{2,-2^*}...c_{2,-2r-1^*}$ and $c_{2,0}c_{2,-1'}c_{2,-2'}...c_{-2r'}$ only have the $(1,d)$-representation drawn in Figure \ref{C8} in which $\left\lvert\vartheta_2\right\rvert=1$. In the first case, since all four endpoints of paths $c_{2,0}c_{2,-1^*}c_{2,-2^*}...c_{2,-2r-1^*}$ and $c_{2,1}c_{1,-1'}c_{1,-2'}...c_{2,-2r'}$ have distance less than $0.9$ from $O_1$, we can apply Lemma \ref{lem:path} to $c_{2,0}c_{2,-1^*}c_{2,-2^*}...c_{2,-r^*}$, $c_{2,-r^*}c_{2,-r-1^*}...c_{-2r-1^*}$, $c_{2,1}c_{2,-1'}c_{2,-2'}...c_{2,-r'}$ and $c_{2,-r'},c_{2,-r-1'},...,c_{2,-2r'}$ and prove that except for the finitely many other vertices of $C_2$, both $c_{2,-r^*}$ and $c_{2,-r'}$ can be located anywhere inside the disk of radius $r-0.9>\left\lceil\frac{U}{2L}\right\rceil$ around $O_1$, and we can get a $(1,d)$-representation of $C$ which fulfills these locations. And thus, $\vartheta_2$ can be any vector with length at most $\frac{U}{L}$. Similarly, if $C$ is turned off, $\vartheta_1$ is fixed, but $\vartheta_2$ can be any vector with length at most $\frac{U}{L}$.
\end{proof}

\begin{lemma}
$ran(D)=\left(0,L\right]$.
\end{lemma}

\begin{proof}
The path formed by the green edges clearly has length $L_2\cdot d$, while the path formed by the red edges can move arbitrarily in the upper half plane above the two endpoints of $D$, so $d$ can take any value in $\left(0,L\right)$. And since $L_1$ and $L_2$ are relative primes, even $d=L$ cannot cause a problem.
\end{proof}

Now create a PVEBG $\hat{G}_L^U$ in the following way:

Call the red edges of $G$ $e_1,e_2,...,e_{\left\lvert E_r(G)\right\rvert}$ and call the red edges of $D$ $e_1',e_2',...,e_{L_1}'$. Now create $L_1\cdot\left\lvert E_r(G)\right\rvert$ copies of $C$ and call them $C^{(i,j)}$ ($1\le i\le\left\lvert E_r(G)\right\rvert$, $1\le j\le L_1$). Also, call the copies of $\vartheta_1$ and $\vartheta_2$ $\vartheta_1^{(i,j)}$ and $\vartheta_2^{(i,j)}$. Now delete all the $e_i$ ($1\le i\le \left\lvert E_r(G)\right\rvert$) and instead connect the two endpoints of $e_i$ and the two endpoints of $\vartheta_1^{(i,j)}$ for $1\le i\le\left\lvert E_r(G)\right\rvert$ and $1\le j\le L_1$. Similarly, delete all the $e_j'$ ($1\le i\le L_1$) and instead connect the two endpoints of $e_j'$ with $\vartheta_2^{(i,j)}$ for $1\le i\le\left\lvert E_r(G)\right\rvert$ and $1\le j\le L_1$. Also, refer to the subgraph spanned by the vertices of $G$ $G'$ and the subgraph spanned by the vertices of $D$ $D'$. And if we only take those $(1,d)$-representations of $\hat{G}_L^U$, for which $d\le U$, then using Proposition \ref{prop:pvebgtoebg}, we can create a satisfactory EBG $G_L^U$.

\begin{lemma}
$ran\left(\hat{G}_L^U\right)\cap\left(L,+\infty\right)\setminus ran(G)=\emptyset$.
\end{lemma}

\begin{proof}
Suppose that for some $d$, $\hat{G}_L^U$ has a $(1,d)$-representation. If for any $i$ and any $j$, $C^{(i,j)}$ is turned on, then the two endpoints of $e_i$ must have distance $1$ because of Lemma \ref{lem:grid}. Similarly, if it is turned off, then the two endpoints of $e_j'$ must have distance $1$. Now suppose that there exists an $i$, for which all the $C^{(i,j)}$ are turned off, it would mean that for all the $j$, at least one $C^{(i,j)}$ is turned off, thus for all $j$, the endpoints of $e_j'$ have distance $1$. If such an $i$ does not exist, then for all $i$, the endpoints of $e_i$ have distance $1$. So either $G^*$ forms a $(1,d)$-representation of $G$ or $D^*$ forms a $(1,d)$-representation of $D$. Thus, $ran(\hat{G}_L^U)\subseteq ran(G)\cup ran(D)$, which proves the statement.
\end{proof}

\begin{lemma}
$ran\left(\hat{G}_L^U\right)\supseteq\left(0,L\right]\cup ran(G)\setminus\left(U,+\infty\right)$.
\end{lemma}

\begin{proof}
Call a representation of a graph similar to a $(1,d)$-representation but in which the red edges go to segments of length $x$ and the blue edges go to segments of length $y$ an $(x,y)$-representation. Also, call any $(x',y)$-representation with $x'\le x$ an $(x',y)$-representation.

From Proposition \ref{prop:gridconnection} we know that for any $(1,d)$-representation of $G$ and $(\le\frac{U}{L},d)$-representation of $D$, there exists a $(1,d)$-representation of $\hat{G}_L^U$ (in which all the $C^{(i,j)}$ are turned on). And since $D$ always has such a representation for $\left(0,U\right]$, this proves $ran\left(\hat{G}_L^U\right)\subseteq ran(G)\setminus\left(U,+\infty\right)$

We also know that for any $(1,d)$-representation of $D$ and $(\le\frac{U}{L},d)$-representation of $G$, there exists a $(1,d)$-representation of $\hat{G}_L^U$ (in which all the $C^{(i,j)}$ are turned off). And since $G$ always has such a representation for $d\le L$ (where $ran(G)\cap\left(L,+\infty\right)\neq\emptyset$), this proves $ran(G)\supseteq\left(0,L\right]$.
\end{proof}
\end{proof}

\hypertarget{pf:valami2}{}\subsection{Proof of Proposition \ref{prop:valami2}}

First, we reverse the colouring of the edges of $G$ and get a graph we will call $G^{*}$. Because of Lemma \ref{lem:inverse}, $ran\left(G^{*}\right)\setminus0=\left\lbrace d\vert\frac{1}{d}\in ran(G)\right\rbrace$. Now construct $\left(G^{*}\right)_{\frac{1}{U_a}}^{\frac{1}{L_b}}$ using Proposition \ref{prop:valami}. Now we reverse the colouring of the edges of $\left(G^{*}\right)_{\frac{1}{U_a}}^{\frac{1}{L_b}}$ and again from Lemma \ref{lem:inverse}, we get that for this graph, $ran\left(\left(\left(G^{*}\right)_{\frac{1}{U_a}}^{\frac{1}{L_b}}\right)^{*}\right)\supseteq\left[U_a,+\infty\right)\cup ran(G)\setminus\left[0,L_b\right)$ and $ran\left(\left(\left(G^{*}\right)_{\frac{1}{U_a}}^{\frac{1}{L_b}}\right)^{*}\right)\cap\left(0,U_a\right)\setminus ran(G)=\emptyset$. Now we apply Proposition \ref{prop:valami} again for this graph, but now with $L=L_a$ and $U=U_b$. The graph we get in the end suffices for $G_{L_a,L_b}^{U_a,U_b}$, since its range contains $\left[L_b,L_a\right]\cup\left[U_a,U_b\right]\cup ran(G)\cap\left(L_a,U_a\right)$.\qed

\section{Concluding remarks and open problems}

We mention a result of Maehera \cite{m}, which states that a number can be realized as the distance of two vertices of a rigid unit distance graph exactly if it is algebraic.

\begin{problem}
Can we construct edge-bicoloured graphs for all semialgebraic sets in $\mathbb{R}$ without the boundedness limitations?
\end{problem}

\begin{problem}\label{prob:1dgraphdistinctvertices}
What if the points representing different vertices do not have to be distinct?
\end{problem}

Note that the constructions used in the proof of Theorem \ref{thm:1dsemialgebraic} do not work for any of the variants above: one of the reasons is that these constructions heavily rely on the fact that in any graph, one can assure that a pair of vertices has the same vector as another pair of vertices by connecting these pairs by a grid (see Figure \ref{fig:redrhombusgrid}).

\begin{problem}
What are the possible ranges if the colouring of the edges is not given in advance?
\end{problem}

\begin{problem}
Can we generalize the result in some other way (more than two distances, more than two dimensions)?
\end{problem}

\section{Acknowledgement}

I would like to thank Dömötör Pálvölgyi for posing the problem and for all the help he gave me in the past years.

\end{document}